\newcommand{\Q}{\ensuremath{\mathbb{Q}}}
\newcommand{\Z}{\ensuremath{\mathbb{Z}}}
\newcommand{\eps}{\epsilon}
\newcommand{\abs}[1]{\ensuremath{\lvert#1\rvert}}
\newcommand{\abss}[1]{\ensuremath{\left\lvert#1\right\rvert}}
\DeclareMathOperator{\rank}{rank}
\DeclareMathOperator{\supp}{supp}
\DeclareMathOperator{\codim}{codim}
\DeclareMathOperator{\Sym}{Sym}
\DeclareMathOperator{\Aut}{Aut}
\DeclareMathOperator{\coker}{coker}
\DeclareMathOperator{\Spec}{Spec}
\newtheorem{theorem}{Theorem}[section]
\newtheorem{lemma}[theorem]{Lemma}
\newtheorem{proposition}[theorem]{Proposition}
\newtheorem{corollary}[theorem]{Corollary}
\theoremstyle{definition}
\newlength{\youngscale}
\newlength{\griddiagscale}
\newcommand{\defn}[1]{\textbf{#1}}
\title{Cokernels of random matrices satisfy the Cohen-Lenstra heuristics}
\author{Kenneth Maples}
\address{Institut f\"ur Mathematik, Universit\"at Z\"urich, Winterthurerstrasse 190, CH-8057 Z\"urich}
\email{kenneth.maples@math.uzh.ch}
\date{7/5/13}
\begin{document}
\begin{abstract}
Let $A$ be an $n \times n$ random matrix with iid entries taken from the $p$-adic integers or $\Z/N\Z$. Then under mild non-degeneracy conditions the cokernel of $A$ has a universal probability distribution. In particular, the $p$-part of a random matrix over $\Z$ has cokernel distributed according to the Cohen-Lenstra measure,
\[
  \mathbb{P}(\coker A \cong G) = \frac{1}{\abs{\Aut G}} \prod_{k = 1}^\infty (1 - p^{-k}) + O(e^{-cn})
\]
where the constants only depend on the min-entropy of the entry measure.
\end{abstract}

\subjclass[2010]{Primary 15B52; Secondary 15B33, 60C05}
\thanks{The author was partially supported by a Graduate Research Fellowship from the National Science Foundation. This article is an outgrowth of the author's work on his Ph.D.~thesis \cite{PHD}}

\maketitle

\section{Introduction}

The Cohen-Lenstra measure is a probability distribution on isomorphism classes of finite abelian $p$-groups, given by
\[
  \mu_{\text{CL}}(G) := \frac{1}{\abs{\Aut G}} \prod_{k=1}^\infty (1 - p^{-k}).
\]
It appears in various guises as a candidate for a random abelian $p$-group. Its best known appearance is in the work of Cohen and Lenstra \cite{CL84}, where based on numerical and heuristic evidence they conjectured that the $p$-part of the ideal class group of imaginary quadratic number fields are distributed according to $\nu$. These conjectures were extended to other number fields, in particular by Malle \cite{Mal08}, as long as the number field does not contain $p$th roots of unity.

It was observed in \cite{FW89} that if $B : \Z_p^n \to \Z_p^n$ is a random matrix with iid entries chosen according to Haar measure, then the cokernel distribution of $B$ converges to the Cohen-Lenstra measure as $n \to \infty$. In this article we show that this property holds for random matrices in a much larger class, which suggests that the Cohen-Lenstra heuristics should not strongly depend on the construction of the group.

Let $\xi \in \Z_p$ be a random variable. We define the min-entropy $\alpha = \alpha(\xi)$ to be the largest value $0 < \alpha < 1$ such that we have the non-degeneracy condition
\[
  \sup_{t \in \Z/p\Z} \mathbb{P}(\xi = t \bmod p) \leq 1 - \alpha.
\]
If $A \in M(n,\Z_p)$ is an iid random matrix whose entries have min-entropy $\alpha$, then we say that $A$ has min-entropy $\alpha$ as well.

We have the following universality principle for random matrices with iid entries in the $p$-adic integers.
\begin{theorem} \label{thm:padiccokernel}
Let $A \in M(n,\Z_p)$ be an iid random matrix with min-entropy $\alpha$. Then for all finite abelian $p$-groups $G$,
\[
  \mathbb{P}(\coker A \cong G) = \frac{1}{\abs{\Aut G}} \prod_{k=1}^\infty (1 - p^{-k}) + O(e^{-c\alpha n})
\]
where the constants are absolute.
\end{theorem}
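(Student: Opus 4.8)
The plan is to count, for each finite abelian $p$-group $G$, the surjections $\pi\colon\Z_p^n\twoheadrightarrow G$ whose kernel is exactly $\im A$. Such $\pi$ correspond bijectively to isomorphisms $\coker A\xrightarrow{\sim}G$, so we have the exact identity
\[
  \mathbb{P}(\coker A\cong G)=\frac{1}{\abs{\Aut G}}\sum_{\pi\in\mathrm{Sur}(\Z_p^n,G)}\mathbb{P}(\ker\pi=\im A),
\]
and I would split each summand as $\mathbb{P}(\ker\pi=\im A)=\mathbb{P}(\im A\subseteq\ker\pi)\cdot\mathbb{P}(\im A=\ker\pi\mid\im A\subseteq\ker\pi)$. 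Since the columns of $A$ are independent, the first factor is $q(\pi)^n$ where $q(\pi)=\mathbb{P}(\pi(\xi_1,\dots,\xi_n)=0)$ and $\xi_1,\dots,\xi_n$ are iid copies of the entry variable. The goal is to show $\sum_\pi q(\pi)^n=1+O(e^{-c\alpha n})$ and that the conditional factor is $\prod_{k\ge1}(1-p^{-k})+O(e^{-c\alpha n})$, after which the formula falls out.

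The analytic core is an anticoncentration (Littlewood--Offord) estimate for $q(\pi)$. Expanding over the character group of $G$ gives $q(\pi)=\abs{G}^{-1}\sum_{\psi\in\widehat G}\prod_{j=1}^n\mathbb{E}[\psi(\pi(e_j))^{\xi_j}]$, and for $\psi\neq1$ each factor with $\psi(\pi(e_j))\neq1$ has modulus at most $1-c\alpha$ by the min-entropy hypothesis: squaring and reducing mod $p$, the difference of two iid entries avoids $0\bmod p$ with probability $\ge\alpha$, where a primitive $p^e$-th root of unity ($e=\exp G$) is bounded away from $1$. I would call $\pi$ \emph{well-spread} if for every maximal subgroup $M<G$ the images $\pi(e_j)$ lie outside $M$ for at least $\epsilon n$ indices, for a small $\epsilon=\epsilon(\alpha,p)$; for such $\pi$ every nontrivial $\psi$ is nonconstant on $\ge\epsilon n$ coordinates, whence $q(\pi)=\abs{G}^{-1}(1+O(e^{-c\alpha n}))$. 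A crude count shows the well-spread surjections number $\abs{G}^n(1+O(e^{-cn}))$, so they contribute $1+O(e^{-c\alpha n})$ to $\sum_\pi q(\pi)^n$.

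The main obstacle is the surjections that are \emph{not} well-spread: here $q(\pi)$ can be as large as $1-\alpha$, and there are too many such $\pi$ for the trivial bound to suffice. The key observation is that if $\pi$ is bad, witnessed by a maximal subgroup $M=\ker\psi$ and a set $S$ of fewer than $\epsilon n$ indices outside of which $\pi$ maps into $M$, then $\psi\circ\pi$ is a nonzero linear functional mod $p$ supported on $S$, hence sparse, and it annihilates $\im A$ whenever $\pi(A)=0$. So I would write $\sum_{\pi\text{ bad}}q(\pi)^n=\mathbb{E}[\#\{\pi\text{ bad}:\pi A=0\}]$, first establishing via a union bound over sparse vectors that $\mathbb{P}(\exists\text{ nonzero sparse }\ell\bmod p\text{ with }\ell A\equiv0)=O(e^{-c\alpha n})$ — choosing $\epsilon$ small enough that the entropy of sparse supports is beaten by the $(1-\alpha)^n$ survival probability — and then controlling the residual contribution by an induction on $\abs G$ that reduces bad surjections onto $G$ to well-spread surjections onto the proper subgroup $M$ paired with this sparse-null-vector estimate.

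It remains to treat the conditional factor $\mathbb{P}(\im A=\ker\pi\mid\im A\subseteq\ker\pi)$ for well-spread $\pi$. Conditioning the columns to lie in the index-$\abs G$ sublattice $\ker\pi\cong\Z_p^n$, I would show this equals $\prod_{k\ge1}(1-p^{-k})+O(e^{-c\alpha n})$ uniformly in $\pi$: this is the $G=0$ case of the theorem (a random iid $\Z_p$-matrix is invertible with the universal probability, proved by a row-by-row rank-increment argument using the same anticoncentration together with the absence of sparse left null vectors), applied after checking that the conditioned entry distribution on $\ker\pi$ still obeys a min-entropy bound of order $\alpha$ — well-spreadness supplies a rank-$(1-\epsilon)n$ coordinate submodule on which the conditioning is essentially trivial, which is what makes this uniform. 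Assembling the pieces gives $\sum_\pi q(\pi)^n=1+O(e^{-c\alpha n})$ and hence the claimed formula; throughout, the reduction $\coker A\cong G\iff\coker(A\bmod p^{e+1})\cong G$ keeps all relevant characters of bounded order, and the $G$-dependence of the implied constants is reconciled with the stated uniformity by the trivial bound once $\abs G$ exceeds $e^{c\alpha n/2}$.
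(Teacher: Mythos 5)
Your route --- counting surjections $\pi\colon\Z_p^n\twoheadrightarrow G$ with $\ker\pi=\im A$ and factoring each term as $q(\pi)^n$ times a conditional exactness probability --- is genuinely different from the paper's column-exposure argument, and the opening identity and the reduction of ``bad'' $\pi$ to sparse left null vectors mod $p$ are sound in outline. But the analytic core is wrong as stated: it is not true that $\abs{\mathbb{E}[\psi(\pi(e_j))^{\xi_j}]}\le 1-c\alpha$ with an absolute constant whenever $\psi(\pi(e_j))\neq 1$. Take $\xi$ uniform on $\{0,1\}$ (min-entropy $1/2$) and $\psi(\pi(e_j))=e(1/p^k)$: the modulus is $\abs{(1+e(1/p^k))/2}=\cos(\pi/p^k)=1-O(p^{-2k})$. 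Min-entropy only constrains the entry modulo $p$, and a unit multiple of a high-order root of unity can still lie arbitrarily close to $1$; the correct per-factor bound degrades like $1-c\alpha p^{-2k}$ where $p^k$ is the order of $\psi(\pi(e_j))$ (even for order-$p$ characters the constant decays like $p^{-2}$). Consequently your estimate $q(\pi)=\abs{G}^{-1}(1+O(e^{-c\alpha n}))$ for well-spread $\pi$ only holds with constants depending on $p$ and $\exp G$, and the proposed repair (trivial bound once $\abs{G}\ge e^{c\alpha n/2}$) does not bridge the gap: already for $\abs{G}$ polynomial in $n$ the actual error is of the shape $e^{-c\alpha n/p^{2e}}$, which need not be small at all. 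Any proof of the stated bound, uniform in $G$ with absolute constants, must aggregate over the character group rather than bound character by character and coordinate by coordinate; this is precisely the role of the paper's Littlewood--Offord estimate $O(1/\sqrt{\alpha m})$ (Lemma~\ref{lem:littlewoodofford}), the $\Spec$/Kneser structure theory, the counting of structured vectors, and the separate large-weight-diagram argument (Proposition~\ref{prop:largeweightdiagram}), none of which have counterparts in your sketch.

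The second gap is the conditional factor $\mathbb{P}(\im A=\ker\pi\mid\im A\subseteq\ker\pi)$: this is not ``the $G=0$ case of the theorem.'' Conditioning a column on $\pi(X)=0$ destroys the independence of its entries in any basis of $\ker\pi$, so the columns, viewed in $\ker\pi\cong\Z_p^n$, do not form an iid-entry matrix; the theorem's hypotheses fail, and ``min-entropy of the conditioned distribution'' is not even the right thing to check, since the proof relies heavily on entry-wise independence. The assertion that well-spreadness makes the conditioning ``essentially trivial'' on $(1-\epsilon)n$ coordinates is unsubstantiated --- conditioning on $\pi(X)=0$ correlates all coordinates. As it stands this step is essentially as hard as the original problem (it is what the paper's enlarged-submodule universality, Proposition~\ref{prop:universality}, actually delivers); the standard way to avoid it within your surjection-counting framework is the moment method, which needs only $\sum_\pi q(\pi)^n$ together with a moment-determinacy argument, neither of which you invoke.
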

We can also control the probability distribution of the cokernel of matrices over other rings. Let $N$ denote a positive integer with $\omega = \omega(N)$ distinct prime factors. Let $B : (\Z/N\Z)^n \to (\Z/N\Z)^n$ be chosen uniformly from all $\Z/N\Z$-module morphisms and let $A$ be a random $n \times n$ matrix over $\Z/N\Z$ with iid entries distributed according to a random variable $\xi$. We define the min-entropy $\alpha(\xi)$ analogously to the $p$-adic case; namely, $0 < \alpha < 1$ is the largest number such that
\[
  \sup_{p \mid N} \sup_{t \in \Z/p\Z} \mathbb{P}(x = t \bmod p) \leq 1 - \alpha.
\]
Then we have the following universality result.
\begin{theorem} \label{thm:compositecokernel}
Let $A \in M(n,\Z/N\Z)$ be an iid random matrix with min-entropy $\alpha$. Then for all finite $\Z/N\Z$-modules $G$,
\[
  \mathbb{P}(\coker A \cong G) = \mathbb{P}(\coker B \cong G) + O_\omega(e^{-c\alpha n})
\]
where the constants depend only on $\omega = \omega(N)$, the number of distinct prime factors of $N$.
\end{theorem}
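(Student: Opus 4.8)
The plan is to reduce to prime powers by the Chinese Remainder Theorem, dispose of each prime using Theorem~\ref{thm:padiccokernel}, and then show that the prime-power blocks of $\coker A$ are asymptotically independent so that the joint distribution factors as the joint distribution of $\coker B$ does. Write $N = \prod_{i=1}^{\omega} p_i^{e_i}$, so that $\Z/N\Z \cong \bigoplus_{i=1}^{\omega} \Z/p_i^{e_i}\Z$ as rings. Under this isomorphism $A$ becomes a tuple $(A_1,\dots,A_\omega)$ with $A_i \in M(n,\Z/p_i^{e_i}\Z)$ the reduction of $A$ modulo $p_i^{e_i}$, every finite $\Z/N\Z$-module splits as $G \cong \bigoplus_i G_i$, and $\coker A \cong G$ if and only if $\coker A_i \cong G_i$ for all $i$. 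The same splitting applied to $B$ produces \emph{independent} blocks $B_i$, each uniform over $M(n,\Z/p_i^{e_i}\Z)$, with $\mathbb{P}(\coker B \cong G) = \prod_i \mathbb{P}(\coker B_i \cong G_i)$. Thus it suffices to prove
\[
  \mathbb{P}\Bigl(\bigcap\nolimits_i \{\coker A_i \cong G_i\}\Bigr) = \prod_i \mathbb{P}(\coker B_i \cong G_i) + O_\omega(e^{-c\alpha n}).
\]
The subtlety is that the blocks $A_i$ are in general \emph{not} independent of one another, since the CRT coordinates of a single entry $\xi$ may be correlated; all the min-entropy hypothesis gives is that each $A_i \bmod p_i$ has entries with min-entropy $\alpha$ over $\Z/p_i\Z$.

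\textbf{One prime at a time.} First I would record the marginal estimate $\mathbb{P}(\coker A_i \cong G_i) = \mathbb{P}(\coker B_i \cong G_i) + O(e^{-c\alpha n})$. This is Theorem~\ref{thm:padiccokernel} carried out over the finite local ring $\Z/p_i^{e_i}\Z$, whose proof applies verbatim; alternatively, lift each entry of $A_i$ to fixed representatives in $\Z_{p_i}$ to obtain $\widetilde A_i \in M(n,\Z_{p_i})$ with the same min-entropy and $\coker A_i \cong \coker\widetilde A_i \otimes \Z/p_i^{e_i}\Z$, apply Theorem~\ref{thm:padiccokernel}, and sum the estimate over the finitely many finite abelian $p_i$-groups $H$ with $H \otimes \Z/p_i^{e_i}\Z \cong G_i$ and all invariant factors at most $p_i^{e_i+n}$, using the a priori tail bound $\mathbb{P}(\coker\widetilde A_i \text{ has an invariant factor} > p_i^{e_i+n}) = O(e^{-c\alpha n})$ and the analogous summation identity for $\mu_{\mathrm{CL}}$. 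The case where $G_i$ has enough invariant factors equal to $p_i^{e_i}$ that this count grows is handled separately, since then both probabilities are already $O(e^{-c\alpha n})$ by a rank-deficiency bound for $A_i \bmod p_i$.

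\textbf{Decorrelation of the blocks.} To upgrade the product of marginals to the joint probability I would compare joint surjection moments. For finite modules $H_i$ over $\Z/p_i^{e_i}\Z$,
\[
  \mathbb{E}\Bigl[\prod\nolimits_i \#\operatorname{Sur}(\coker A_i,H_i)\Bigr] = \sum_{\phi=(\phi_i)}\ \prod_{j=1}^{n}\mathbb{P}\bigl(\phi_i(v_j \bmod p_i^{e_i}) = 0 \text{ for all } i\bigr),
\]
the outer sum over tuples of surjections $\phi_i\colon (\Z/p_i^{e_i}\Z)^n \twoheadrightarrow H_i$ and $v_j$ the $j$-th column of $A$; the product over $j$ appears because the columns of $A$ are independent. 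For a single random column $v$ one expands $\mathbb{P}(\phi_i(v\bmod p_i^{e_i})=0\ \forall i)$ by Fourier analysis on $\bigoplus_i H_i$: a character nontrivial on some $H_i$ probes only the mod-$p_i$ structure of the entries, so the per-prime min-entropy bounds the corresponding character sum of $\xi$ strictly below $1$; hence for all but an exponentially small fraction of $\phi$ — the ``structured'' tuples, whose coordinates essentially lie in a proper submodule — this probability equals $\prod_i\abs{H_i}^{-1}\bigl(1+O(e^{-c\alpha n})\bigr)$. Only the trivial character survives at leading order, so the bulk sum factors as $\prod_i \#\operatorname{Sur}((\Z/p_i^{e_i}\Z)^n,H_i)\prod_i\abs{H_i}^{-n} = \prod_i \mathbb{E}[\#\operatorname{Sur}(\coker B_i,H_i)]$, while the structured $\phi$ are shown to be negligible exactly as in the proof of Theorem~\ref{thm:padiccokernel}, now run prime by prime and combined with a union bound over the $\omega$ primes. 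Finally one inverts the moments: the joint law of $(\coker A_1,\dots,\coker A_\omega)$ is recovered from the joint moments by inclusion--exclusion over tuples of finite modules, with $O_\omega(1)$ combinatorial coefficients because only $\omega$ primes occur, and the inversion is quantitatively stable after the usual tail truncation; this gives the required factorization of $\mathbb{P}(\coker A \cong G)$, uniformly in $G$.

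\textbf{Main obstacle.} The crux is the decorrelation step: one must show that the structured tuples $\phi$ contribute negligibly and that the moment inversion is stable with error uniform in $G$, while arranging that every implied constant depends only on $\omega$ and not on the sizes of the primes $p_i$. This is the analytic difficulty already present in Theorem~\ref{thm:padiccokernel} together with the bookkeeping of handling $\omega$ primes simultaneously, the $\omega$-dependence of the constants entering through the union bound and the $\omega$-fold product of $(1-c\alpha)$-type factors.
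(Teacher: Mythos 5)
Your overall strategy --- CRT splitting, per-prime marginals via Theorem~\ref{thm:padiccokernel}, and then a joint surjection-moment computation plus moment inversion to decorrelate the primes --- is genuinely different from the paper's. The paper never computes moments: it runs the column-exposure process over $R=\Z/N\Z$ itself, tracking one Young tableau per prime dividing $N$ (Corollary~\ref{cor:compositetableau}), and the cross-prime dependence you correctly worry about is absorbed into Proposition~\ref{prop:universality}, which is stated for enlarged submodules of $(\Z/N\Z)^n$. Those submodules encode simultaneous congruence conditions at all primes at once, so the single estimate $\mathbb{P}(X\in\phi_{\mathcal F}(W_k)\mid\cdot)=\abs{V^\perp}^{-1}+O(e^{-c\alpha n})$ already compares the \emph{joint} law at all primes to the uniform one; no independence of the blocks $A_i$ is ever needed or claimed, and the conclusion follows by the same tableau/inclusion--exclusion bookkeeping as in the $\Z_p$ case, with the $\omega$-dependence coming only from the number of tableaux.

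As written, your decorrelation step has a genuine gap, in two places. First, you assert that the structured surjection tuples are negligible ``exactly as in the proof of Theorem~\ref{thm:padiccokernel}''; but that proof is not a moment computation. Its structured-vector analysis (the semi-saturated case) bounds $\mathbb{P}(X\in N)$ for individual submodules orthogonal to column spans, and gives no bound on sums of the form $\sum_{\phi}\prod_{j}\mathbb{P}(\phi(v_j)=0)$ over the roughly $\abs{H}^{n}$ surjections $\phi$; making that estimate rigorous requires a separate classification of surjections with bounds uniform enough to survive the sum (this is the technical core of the moment-method universality arguments in the literature), not a citation to this paper. Second, the moment inversion is not a finite inclusion--exclusion with $O_\omega(1)$ coefficients: for each prime it runs over infinitely many finite modules, and recovering $\mathbb{P}(\coker A\cong G)$ with an error that is exponentially small and \emph{uniform in $G$} requires truncation, tail bounds on large corank and large invariant factors, and control of how moment errors of size $e^{-c\alpha n}$ propagate against growing coefficients --- precisely the place where moment arguments typically lose exponential rates, and which your sketch does not address. (Your per-prime marginal reduction, summing over lifts $H$ of $G_i$ with truncated invariant factors, is workable but already illustrates the bookkeeping that would be needed everywhere.) So the proposal could presumably be completed along moment-method lines, but that is a different and substantially longer proof than the paper's, and the crux is currently unproved rather than routine.
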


As a corollary, we have the following generalization of Theorem~1.2 from \cite{M10a} to control the rank of random matrices over $\Z/p\Z$ for primes $p$.
\begin{corollary}
Let $A \in M(n,\Z/p\Z)$ be a iid random matrix with min-entropy $\alpha$. Then we have
\[
  \mathbb{P}(\rank A = n - k) = p^{-k^2} \frac{\prod_{\ell = k+1}^\infty (1 - p^{-\ell})}{\prod_{\ell = 1}^k (1 - p^{-\ell})} + O(e^{-c\alpha n})
\]
for all $0 \leq k \leq n$, where the constants are absolute.
\end{corollary}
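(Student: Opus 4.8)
The plan is to deduce the Corollary from Theorem~\ref{thm:compositecokernel} in the special case $N = p$ --- where $\omega(N) = 1$, so that the implied constants become absolute --- combined with the classical enumeration of matrices of a given rank over a finite field.

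First I would translate between rank and cokernel. Over the field $\Z/p\Z$ the cokernel of any matrix $A \in M(n,\Z/p\Z)$ is a $\Z/p\Z$-vector space, and rank--nullity gives $\dim \coker A = n - \rank A$; hence the events $\{\rank A = n-k\}$ and $\{\coker A \cong (\Z/p\Z)^k\}$ coincide, and the same is true for the uniformly random matrix $B$ supplied by Theorem~\ref{thm:compositecokernel}. Applying that theorem with $N = p$ and $G = (\Z/p\Z)^k$ therefore gives
\[
  \mathbb{P}(\rank A = n-k) = \mathbb{P}(\rank B = n-k) + O(e^{-c\alpha n}),
\]
and it remains to evaluate the rank distribution of a uniformly random $B \in M(n,\Z/p\Z)$.

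For the last step I would use the standard count: a matrix of rank $n-k$ is determined by its column space, an $(n-k)$-dimensional subspace of $(\Z/p\Z)^n$ (there are $\binom{n}{n-k}_p$ of these), together with a surjective $\Z/p\Z$-linear map $(\Z/p\Z)^n \twoheadrightarrow (\Z/p\Z)^{\,n-k}$ onto it (there are $\prod_{i=0}^{n-k-1}(p^n - p^i)$ of these), so that
\[
  \mathbb{P}(\rank B = n-k) = p^{-n^2}\binom{n}{n-k}_p \prod_{i=0}^{n-k-1}\bigl(p^n - p^i\bigr).
\]
Writing the $q$-binomial coefficient and the product in terms of the factors $1-p^{-j}$ and collecting powers of $p$ --- a routine manipulation --- reduces the right-hand side to $p^{-k^2}$ times a ratio of finite partial products of the $1-p^{-j}$, which as $n \to \infty$ converges to $p^{-k^2}\prod_{\ell=k+1}^\infty(1-p^{-\ell})\big/\prod_{\ell=1}^k(1-p^{-\ell})$. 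Since $1 - \prod_{j > n}(1-p^{-j}) = O(p^{-n})$, the discrepancy between the finite-$n$ value and this limit is $O(p^{-cn})$, and because $\alpha < 1$ this is swallowed by the $O(e^{-c\alpha n})$ error already in place. I do not expect any real obstacle here: Theorem~\ref{thm:compositecokernel} carries all of the probabilistic content, and the only thing to watch is the bookkeeping confirming that the $\alpha$-independent truncation error is dominated by the error term coming from the theorem.
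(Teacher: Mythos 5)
Your proposal is correct and follows the route the paper intends: the corollary is exactly the $N=p$ specialization of Theorem~\ref{thm:compositecokernel} (so $\omega=1$ and the constants are absolute), with $\{\rank A=n-k\}=\{\coker A\cong(\Z/p\Z)^k\}$ by rank--nullity, combined with the classical count of rank-$(n-k)$ matrices over $\mathbb{F}_p$ whose finite-$n$ value differs from the stated limit by $O(p^{-cn})$, which is absorbed into $O(e^{-c\alpha n})$ since $\alpha<1$. No gaps; the paper itself states this corollary without further proof, and your derivation supplies precisely the intended argument.
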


From the above results, we can recover the torsion-free results of Tao and Vu \cite{TV06b}, which control the probability that an iid random matrix over a torsion-free ring is singular. We are also able to recover the estimates over prime fields of Charlap, Rees, and Robbins \cite{CRR90}, Kahn and Kom\l\'os \cite{KK01}, and the author \cite{M10a}. There does not appear to be any previous estimates for the cokernels of non-uniform random matrices in the literature.

It is also easy to see that the above estimates are sharp up to the constants. In fact, let $A$ be a random $0,1$-matrix with iid entries $\xi$ such that $\mathbb{P}(\xi = 1) = \alpha$. then each column is zero with probability $(1-\alpha)^{-n} = O(e^{-c \alpha n})$. As $A$ has trivial cokernel over the $p$-adic integers if and only if it is invertible mod $p$, and the Cohen-Lenstra measure assigns the (positive, independent of $n$) probability $\prod_{\ell \geq 1} (1 - p^{-\ell})$ to the trivial cokernel, we see that the error in Theorem~\ref{thm:padiccokernel} is necessary.

For now let $R = \mathbb{Z}_p$ or $\Z/N\Z$ and let $X_1, ..., X_n$ denote the columns of $A$. The main idea behind Theorem~\ref{thm:padiccokernel}~and~\ref{thm:compositecokernel} is to expose the columns one by one and compute the probability distribution of the successive quotients
\[
  R^n / \langle X_{k+1}, ..., X_n \rangle
\]
where $\langle X_{k+1}, ..., X_n \rangle$ denotes their span as $R$-modules. Let $W_k$ denote the quotient module $R^n / \langle X_{k+1}, \ldots, X_n \rangle$. Then $W_k$ will, with high probability, be isomorphic to $R^k \times T_k$ for some \emph{finite} $R$-module $T_k$. Conditioning on the isomorphism class for $T_j$ with $j > k$, we can partition $R^n$ into sets according to the resulting class of $T_k$. These subsets of $R^n$ can be written as the set-theoretic difference of $R$-submodules of $R^n$ that are constructed in a natural way from $\langle X_{k+1}, \ldots, X_n \rangle$.

It then remains to compute the probability that $X_k$ lies in one of the submodules of $R^n$ from the previous decomposition. This can be done by classifying the submodule according to its combinatorial structure. The analysis of these submodules relies on the swapping argument of Tao and Vu \cite{TV06b, TV07}, based on work of Kahn, Koml\'os, and Szemer\'edi \cite{KKS95} which the author subsequently adapted to finite fields \cite{M10a}. These arguments us to show that enlarged submodules $N$ where $\mathbb{P}(X_k \in N)$ deviates significantly from $\abs{N^\perp}^{-1}$ can be induced with much higher probability by a different probability distribution.

For the remaining submodules, we still have $\mathbb{P}(X_k \in N)$ deviating slightly from uniform. In this setting we generalize the inverse theorem of \cite{M10a} to show that $N^\perp$ contains a ``structured'' vector. Since we can assume that this probability is within a constant factor of $\abs{N^\perp}^{-1}$, it is possible to explicitly enumerate all such vectors.

The proof relies heavily on the independence of the entries of the matrix. It is likely that the proof could be generalized as in the work of Bourgain, Vu, and Matchett-Wood \cite{BVW10} to consider matrices with independent but not necessarily identically distributed entries. It is also likely that some dependencies could be introduced among the columns (or rows) of the matrix, so long as the Fourier-analytic arguments can be preserved. No attempt was made to optimize the constants appearing in Theorems~\ref{thm:padiccokernel}~and~\ref{thm:compositecokernel}.

Although we only work over $R = \Z_p$ and $\Z/N\Z$ in this article, the argument is amenable to much more general analysis. We do not attempt maximum generality to avoid unnecessary complexity for the most interesting cases. It would, however, be nice to extend Theorem~\ref{thm:padiccokernel} to the ring of integers of finite extensions of $\Q_p$. It seems that the arguments in this article suffice if we assume that the probability distribution on the entries, taken modulo the maximal ideal $\mathfrak{m}$, are ``non-degenerate'' in the sense that they do not concentrate on an \emph{additive cosets} of $\mathbb{F}_{p^f} \cong R/\mathfrak{m}$. However, it is intuitively clear that it should suffice for bounded exponents $f$, for the probability distribution to not concentrate on \emph{affine subfields}; i.e.~subsets of the form $\beta \mathbb{F}_{p^d} + \gamma$ for some $d \mid f$.

\section{Notation} \label{sec:notation}

We will use standard asymptotic notation. For an index variable $n$ and functions $f, g$ of $n$, we write $f = O(g)$ to mean that there are absolute constants $n_0$ and $C$ such that for all $n > n_0$, $f(n) \leq C g(n)$. These constants may change from line to line.  Similarly, the equation $f = g + O(h)$ is shorthand for $\abs{f - g} = O(h)$. We use the notations $f \lesssim g$ and $g \gtrsim f$ to denote $f = O(g)$ when convenient.

It is convenient to employ probabilistic notation. If $E$ is an event, we let $\mathbb{P}(E)$ denote its probability; if $F$ is another event, then we can express the conditional probability of $E$ on $F$ as $\mathbb{P}(E \mid F)$ and their intersection as $\mathbb{P}(E \wedge F)$. For random variables $X$ we let $\mathbb{E} X$ denote their expectation.

For $V$ an $R$-submodule of $R^n$, we define
\[
  V[t] := \{v \in R^n \mid tv \in V\}
\]
and by abuse of notation
\[
  V[\infty] := \{v \in R^n \mid tv \in V \text{ for some non-zero } t \in R\}.
\]

For positive integers $m,n$ we will let $[m,n]$ denote their least common multiple.

We will use the number theorist's exponential function $e(t) := \exp(2 \pi i t)$ and $e_p(t) := \exp (2 \pi i t / p)$.

If $A$ is a set we will let $\#A$ and $\abs{A}$ both denote its cardinality. We write $[n] = \{1, \ldots, n\}$ for the indicated set.

\section{The column exposure process} \label{sec:columnexposureprocess}

\subsection{Statement of the universality proposition}

Consider the set of $n \times n$ matrices over the finite field $\mathbb{F}_p$. It is well-known that the proportion of invertible matrices in this set can be calculated from the sequence of column vectors $X_1, \dotsc, X_n$ and the associated sequence of subspaces
\[
  W_\ell := \langle X_{\ell+1}, \dotsc, X_n \rangle \subseteq \mathbb{F}_p^n
\]
for $\ell = 0, \dotsc, n$. Here we have chosen to expose the columns from $X_n$ to $X_1$ so that the dominant contribution to the cokernel will be for small indices; in particular, this greatly simplifies calculation. To compute this proportion, we see that the entire matrix is invertible if and only if $\codim W_\ell = n - \dim W_\ell = \ell$ for each $0 \leq \ell \leq n-1$. If we let $A$ denote a random $n \times n$ matrix over $\mathbb{F}_p$ chosen uniformly, so that the columns $X_1, \dotsc, X_n$ are independent random vectors taken uniformly from $\mathbb{F}_p^n$, then we compute
\begin{align*}
  \mathbb{P}(\codim W_{\ell-1} = \ell-1 \mid \codim W_\ell = \ell) &= 1 - \mathbb{P}(X_\ell \in W_\ell \mid \codim W_\ell = \ell) \\
  &= 1 - p^{-\ell}
\end{align*}
so, in particular,
\[
  \mathbb{P}(A \text{ is invertible}) = \prod_{\ell=1}^n (1 - p^{-\ell}).
\]

We can generalize this argument to compute the cokernel of a random $n \times n$ matrix $A$ over the ring $R$, given by
\[
  \coker A := R^n / \langle X_1, ..., X_n \rangle.
\]
We consider the sequence of submodules
\[
  W_\ell := \langle X_{\ell + 1}, ..., X_n \rangle
\]
as $\ell = 0, \dotsc, n$. We have the natural quotient map
\[
  \phi_\ell : R^n / W_\ell \longrightarrow R^n / W_{\ell - 1}
\]
with kernel equal to the $R$-span of $X_\ell$ in $R_n / W_\ell$.

In this section we show that the isomorphism class of these quotients can be computed by testing the membership of the column vectors $X_\ell$ in various submodules constructed in a natural way from $W_\ell$.

Let $\mathcal{F}$ be a finite set of pairs in $R \times (R \cup \{\infty\})$. For a given submodule $W \subset R^n$, we define the \textbf{enlarged submodule} $\phi_{\mathcal{F}}(W)$ to be
\[
  \phi_{\mathcal{F}}(W) = \bigcap_{(a,b) \in \mathcal{F}} a R^n + W[b]
\]
where
\[
W[t] := \{v \in R^n \mid tv \in W\}
\]
and
\[
W[\infty] := \{v \in R^n \mid tv \in W \text{ for some non-zero } t \in R\}.
\]
Since enlarged submodules are functorial in $W_\ell$, we will denote the enlarged submodule with index set $\mathcal{F}$ by $\phi_{\mathcal{F}}(W_\ell)$.

The following universality result forms the heart of the argument.
\begin{proposition} \label{prop:universality}
For some $0 < \eta < 1$ and all $k \leq \eta n$, the following holds. Let $\xi \in R$ be a random variable with min-entropy $\alpha$. Let $X, X_{k+1}, \ldots, X_n \in R^n$ be iid random vectors with independent coefficients distributed according to $\xi$. Let $W_k = \langle X_{k+1}, \ldots, X_n \rangle$ denote the span of the $n-k$ independent vectors. Then for every $R$ submodule $V \subset R^n$ and finite index set $\mathcal{F}$,
\[
  \mathbb{P}(X \in \phi_{\mathcal{F}}(W_k) \mid R^n / \phi_{\mathcal{F}}(W_k) \cong V^\perp) = \abs{V^\perp}^{-1} + O(e^{-c \alpha n})
\]
where the constants are absolute. In particular, if $V$ is not cofinite, then the indicated probability is bounded by $O(e^{-c \alpha n})$.
\end{proposition}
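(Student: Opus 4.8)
The plan is to compute $\mathbb{P}(X \in \phi_{\mathcal{F}}(W_k))$ by Fourier duality on $R^n$ and to show that, once we condition on the isomorphism type of $R^n/\phi_{\mathcal{F}}(W_k)$, only the trivial character contributes beyond an exponentially small error. Abbreviate $N = \phi_{\mathcal{F}}(W_k)$, let $\mu$ be the law of a single coordinate $\xi$, write $\widehat{R^n}$ for the Pontryagin dual, and for cofinite $N$ let $N^\perp \subseteq \widehat{R^n}$ be its annihilator, so that $\abs{N^\perp} = \abs{R^n/N} = \abs{V^\perp}$. Orthogonality of characters gives
\[
  \mathbb{P}(X \in N) = \frac{1}{\abs{N^\perp}} \sum_{\psi \in N^\perp} \prod_{i=1}^n \widehat{\mu}(\psi_i),
\]
where $\psi = (\psi_1, \ldots, \psi_n)$ with each $\psi_i$ a character of $R$ and $\widehat{\mu}(\chi) = \mathbb{E}\,\chi(\xi)$. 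The trivial character contributes exactly $\abs{N^\perp}^{-1}$, so the proposition reduces to showing that the remaining sum is $O(e^{-c\alpha n})$ on average over the column exposure process producing $W_k$. (The non-cofinite case is easier: there $\phi_{\mathcal{F}}(W_k)$ has positive corank, $X \in N$ forces $X$ into the proper saturated submodule $N[\infty]$, and the same estimates applied to the free quotient $R^n/N[\infty]$ --- whose defining functionals are generic once the structure is fixed --- give the bound; we suppress the details.)

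Split the nonzero characters of $N^\perp$ by their support. Those with at least $\delta n$ \emph{active} coordinates --- coordinates at which $\psi_i$ has order $p$ for some prime $p$ (dividing $N$, in the $\Z/N\Z$ case) --- satisfy $\abs{\prod_i \widehat{\mu}(\psi_i)} \leq (1-c\alpha)^{\delta n}$ by the min-entropy hypothesis, and since there are at most $\abs{N^\perp}$ of them they contribute $O(e^{-c\alpha n})$ after dividing by $\abs{N^\perp}$. The \emph{structured} characters --- fewer than $\delta n$ active coordinates --- are the crux, and here one must both reduce the non-minimal $p$-adic levels of $\psi$ (this is why the enlarged submodules, which record all $p$-adic levels of $W_k$, are the natural objects) and control how many structured characters $N^\perp$ can contain. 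We handle this by generalizing the inverse theorem of \cite{M10a} from vector spaces to $R$-modules: it confines the structured characters of $N^\perp$ to a short, explicitly enumerable list, of cardinality $e^{o(\alpha n)}$, unless $N$ is atypical in a precise sense.

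We then split on whether $N$ is \emph{bad}, meaning $\mathbb{P}(X \in N) \geq C\abs{N^\perp}^{-1}$ for a large absolute constant $C$. For bad $N$ we run the swapping argument of Kahn--Koml\'os--Szemer\'edi \cite{KKS95} and Tao--Vu \cite{TV06b}, adapted to $R$-modules and to the functor $W \mapsto \phi_{\mathcal{F}}(W)$: comparing the given entry distribution against the maximally spread one on $R$ shows that the column exposure process produces a bad submodule only with probability $O(e^{-c\alpha n})$ (for the spread distribution this is an elementary corank count), so conditioning on $R^n/\phi_{\mathcal{F}}(W_k) \cong V^\perp$, which averages over the realizable submodules with that quotient, absorbs the bad ones into the error term. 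For $N$ not bad, the bound $\mathbb{P}(X \in N) \leq C\abs{N^\perp}^{-1}$ together with the generalized inverse theorem makes the structured part of $N^\perp$ both small and explicitly listed; bounding $\abs{\prod_i \widehat{\mu}(\psi_i)}$ on each such character via min-entropy and summing the resulting geometric series over support sizes yields the remaining $O(e^{-c\alpha n})$, hence $\mathbb{P}(X \in N) = \abs{N^\perp}^{-1} + O(e^{-c\alpha n})$, which is the proposition.

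The main obstacle is the structured regime. First, the module-theoretic inverse theorem must control the number of characters of $N^\perp$ with non-negligible Fourier transform \emph{uniformly in $p$} --- the difficulty being that for large $p$ a character can be combinatorially unstructured yet have every Fourier coefficient close to $1$, so the counting must be played off against the constraint $\mathbb{P}(X \in N) = O(\abs{N^\perp}^{-1})$ and against the joint, rather than pointwise, behaviour of the $\widehat{\mu}(\psi_i)$. Second, the swapping argument, run in \cite{M10a} and \cite{TV06b} against the plain span of the columns, must be pushed through the operations $W \mapsto W[b]$ and $W \mapsto aR^n + W$ defining $\phi_{\mathcal{F}}$, since resampling a single column perturbs the enlarged submodule much less transparently than it perturbs $W_k$ itself. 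Merging the prime-field analysis of \cite{M10a} with the $p$-adic Haar computation of \cite{FW89} and extending both is the technical heart of the proof.
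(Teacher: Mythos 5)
Your reduction hinges on a pointwise Fourier bound that is false, and on a per-submodule estimate that cannot in principle give the stated error. You assert that a character $\psi\in N^\perp$ with at least $\delta n$ active coordinates satisfies $\abs{\prod_i\widehat{\mu}(\psi_i)}\leq(1-c\alpha)^{\delta n}$ ``by the min-entropy hypothesis''. Min-entropy constrains the distribution of $\xi\bmod p$, not the size of individual Fourier coefficients: for $\xi$ uniform on $\{0,1\}$ and $\psi_i$ of exact order $p$ given by $\psi_i(x)=e_p(x)$ one has $\abs{\widehat{\mu}(\psi_i)}=\abs{(1+e_p(1))/2}=1-O(p^{-2})$, so for $p\gg\sqrt{n}$ a character can be active in $\delta n$ coordinates and still have Fourier product $1-o(1)$; even at fixed $p$ the constant degrades like $\alpha/p^{2}$, whereas the proposition claims absolute constants. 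More fundamentally, no estimate made for a fixed $N$ can produce the absolute error $O(e^{-c\alpha n})$: a cofinite $N$ of bounded index can carry a nontrivial character with product near $1$ (for instance $N^\perp$ generated by a dual vector of small support), and then $\mathbb{P}(X\in N)-\abs{N^\perp}^{-1}$ is of size comparable to $\abs{N^\perp}^{-1}$, not $e^{-c\alpha n}$. The statement is intrinsically about the \emph{random} submodule $\phi_{\mathcal{F}}(W_k)$, and the paper proves it by showing the column process produces a non-saturated $N$ (sparse, unsaturated, or semi-saturated in its classification) only with probability $O(e^{-c\alpha n})$: sparse $N$ via a union bound over low-support dual vectors together with the $q$-uniform Littlewood--Offord bound $q^{-1}+O(1/\sqrt{\alpha m})$ of Lemma~\ref{lem:littlewoodofford}, and semi-saturated $N$ by extracting a dual vector most of whose coordinates lie in $\Spec_{1-\eps}\mu$, bounding the spectrum's density in each order class by $\beta T$, and counting the submodules containing such a vector against $\abs{\Aut W^\perp}$ and the representation probability $(D/\abs{W^\perp})^{n-\ell}$. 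Your ``inverse theorem plus geometric series'' endgame for non-bad $N$ sits exactly where this counting has to happen, and as stated it yields at best a bound of order $\abs{N^\perp}^{-1}$ times the size of the structured list, which is not $O(e^{-c\alpha n})$.

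The swapping step is also aimed the wrong way. For your ``bad'' $N$, with $\mathbb{P}(X\in N)\geq C\abs{N^\perp}^{-1}$, comparing against the maximally spread (Haar) distribution cannot work: a Haar column lies in $N$ with probability exactly $\abs{N^\perp}^{-1}$, i.e.\ \emph{less} often than $X$ does, while the swapping argument needs replacement columns that lie in $N$ noticeably \emph{more} often than $X$, so that the losses from using trivial bounds on the swapped-in columns can be absorbed. This is why Lemma~\ref{lem:swap} swaps in the lazy measure $\nu=(1-\gamma)\delta_0+\gamma\,\mu*\mu^-$, whose Fourier transform is nonnegative (so its deviation is always inflationary) and for which the two-sided comparison $\abs{\mathbb{P}(X\in N)-\abs{N^\perp}^{-1}}\leq(\tfrac12+o(1))\abs{\mathbb{P}(Y\in N)-\abs{N^\perp}^{-1}}$ holds for every non-sparse $N$; proving that comparison is not an ``elementary corank count'' but the level-set/Kneser argument of Section~\ref{sec:swap}, and carrying the swap through the functor $W\mapsto\phi_{\mathcal{F}}(W)$ requires Proposition~\ref{prop:dotheswap} together with the subbasis conditioning and Odlyzko bound. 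Note finally that universality needs two-sided control: submodules whose probability is \emph{deflated} relative to $\abs{N^\perp}^{-1}$ are as fatal as inflated ones, and your bad/not-bad dichotomy only flags the latter. So while your ingredient list (Fourier expansion, a structure/inverse theorem, swapping, counting) matches the paper in spirit, the two concrete mechanisms you rely on fail at precisely the points where the paper's sparse counting, the choice of $\nu$, and the semi-saturated structured-vector count do the work.
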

Note that $\abs{V^\perp}^\perp$ is the probability that a \emph{uniform} random $X$ lies in a submodule $Y$ with $R^n / Y \cong V^\perp$.

The bulk of this article concerns the proof of Proposition~\ref{prop:universality}. The goal of this section is to show how Proposition~\ref{prop:universality} implies Theorems~\ref{thm:padiccokernel}~and~\ref{thm:compositecokernel}. Before we do so, it is convenient to recall that Theorem~\ref{thm:padiccokernel} holds if we can show that the cokernel distribution for $A$ is exponentially close to the cokernel distribution of a uniform random matrix over $\Z_p$, which is shown in e.g.~\cite{FW89} with a lemma from \cite{CL84}.

\begin{proposition}
Let $A \in M(n,\Z_p)$ be taken according to the uniform measure. Then for any finite abelian $p$-group $G$,
\[
  \mathbb{P}(\coker A \cong G) = \frac{1}{\abs{\operatorname{Aut} G}} \prod_{k=1}^n (1 - p^{-k}) \prod_{j=n-r+1}^n (1 - p^{-j})
\]
where $r = \rank_{\mathbb{F}_p} G / p G$.
\end{proposition}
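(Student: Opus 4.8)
The plan is to count directly. Write $X_1,\dots,X_n$ for the columns of $A$, which are iid Haar-uniform in $\Z_p^n$, and set $M = \langle X_1,\dots,X_n\rangle$. Then $\coker A \cong G$ precisely when $\Z_p^n/M \cong G$, so
\[
  \mathbb{P}(\coker A \cong G) = \sum_{M}\, \mathbb{P}\bigl(\langle X_1,\dots,X_n\rangle = M\bigr),
\]
the sum being over all submodules $M\subseteq\Z_p^n$ with $\Z_p^n/M\cong G$. Each such $M$ is automatically free of rank $n$: over the PID $\Z_p$ a submodule of $\Z_p^n$ is free of rank at most $n$, and the rank is exactly $n$ since the quotient is finite.

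First I would evaluate a single term. The event $\langle X_1,\dots,X_n\rangle = M$ forces every $X_i\in M$, which occurs with probability $[\Z_p^n:M]^{-1}=\abs{G}^{-1}$ for each $i$ independently; conditioned on this, the $X_i$ are iid Haar-uniform in $M$, and expressing them in a $\Z_p$-basis of $M$ reduces us to the probability that a uniform $n\times n$ matrix over $\Z_p$ is invertible. By Nakayama (equivalently, passing to the reduction mod $p$) this is $\prod_{k=1}^n(1-p^{-k})$, the computation already recorded at the start of Section~\ref{sec:columnexposureprocess}. Hence each term equals $\abs{G}^{-n}\prod_{k=1}^n(1-p^{-k})$, and it remains only to count the submodules $M$.

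Next I would count those submodules through surjections. Sending a surjection $\psi:\Z_p^n\twoheadrightarrow G$ to its kernel defines a map from $\mathrm{Sur}(\Z_p^n,G)$ onto $\{M:\Z_p^n/M\cong G\}$ whose fibers are exactly the (free) $\Aut G$-orbits, so $\#\{M\} = \#\mathrm{Sur}(\Z_p^n,G)/\abs{\Aut G}$. To count $\mathrm{Sur}(\Z_p^n,G)=\{(g_1,\dots,g_n)\in G^n : g_1,\dots,g_n \text{ generate } G\}$, one notes (Nakayama once more) that $(g_i)$ generates $G$ iff its image generates $G/pG\cong\mathbb{F}_p^r$; the number of generating $n$-tuples in $(\mathbb{F}_p^r)^n$ is $\prod_{i=0}^{r-1}(p^n-p^i)$, and each lifts to $G$ in precisely $(\abs{G}/p^r)^n$ ways. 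Rewriting $\prod_{i=0}^{r-1}(p^n-p^i)=p^{rn}\prod_{j=n-r+1}^{n}(1-p^{-j})$ then gives $\#\mathrm{Sur}(\Z_p^n,G)=\abs{G}^n\prod_{j=n-r+1}^n(1-p^{-j})$.

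Combining the two computations,
\[
  \mathbb{P}(\coker A\cong G) = \abs{G}^{-n}\prod_{k=1}^n(1-p^{-k})\cdot\frac{\abs{G}^n}{\abs{\Aut G}}\prod_{j=n-r+1}^n(1-p^{-j}),
\]
and the powers of $\abs{G}$ cancel to leave the asserted formula. I do not expect a genuine obstacle here — this is the classical Cohen--Lenstra count — but the points needing care are the two Nakayama reductions (that generation of $G$, and surjectivity onto $G$, can be tested modulo $p$) and the product bookkeeping converting $\prod_{i=0}^{r-1}(p^n-p^i)$ into $p^{rn}\prod_{j=n-r+1}^n(1-p^{-j})$; the degenerate range $r>n$, in which $G$ is not a quotient of $\Z_p^n$, is handled automatically, since then the product over $j$ contains the vanishing factor $j=0$ and both sides are $0$.
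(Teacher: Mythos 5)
Your proof is correct. Note that the paper itself does not prove this proposition at all: it defers to Friedman--Washington \cite{FW89}, which relies on a counting lemma of Cohen--Lenstra \cite{CL84}. Your direct computation is essentially the classical argument those references carry out: partition the event $\{\coker A \cong G\}$ over the finitely many submodules $M \subseteq \Z_p^n$ with $\Z_p^n/M \cong G$ (all of which contain $\abs{G}\Z_p^n$, hence finitely many, and are free of rank $n$); observe that conditioned on $X_1,\dotsc,X_n \in M$ the columns are iid Haar on $M$, so that generating $M$ reduces, in a basis of $M$, to invertibility of a uniform matrix over $\Z_p$, which is detected mod $p$ and has probability $\prod_{k=1}^n(1-p^{-k})$; and count the $M$ as $\#\mathrm{Sur}(\Z_p^n,G)/\abs{\Aut G}$, with the surjection count handled by Nakayama and the standard count of spanning $n$-tuples in $\mathbb{F}_p^r$. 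The delicate points you flag are indeed the right ones and are handled correctly: the $\Aut G$-action on surjections with fixed kernel is simply transitive, a surjective endomorphism of $\Z_p^n$ is automatically bijective, the product identity $\prod_{i=0}^{r-1}(p^n-p^i)=p^{rn}\prod_{j=n-r+1}^{n}(1-p^{-j})$ is correct, and the vanishing factor at $j=0$ takes care of $r>n$. So your write-up supplies a self-contained proof of a statement the paper only cites, by the same route as the cited sources.
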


\subsection{The sequence of partial cokernels}

The partial quotients contain information about the syzygies of $X_{\ell + 1}, ..., X_n$ over quotients of $R$. We first observe that for $R = \Z_p$ we can extract the free part of $\Z_p^n / W_\ell$ almost surely.
\begin{proposition} \label{prop:noxintorsion}
For $R = \Z_p$ we have $\mathbb{P}(X_\ell \in W_\ell[\infty]) = O(e^{-c \alpha n})$.
\end{proposition}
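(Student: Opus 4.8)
The plan is to observe that the torsion saturation $W_\ell[\infty]$ is itself one of the enlarged submodules handled by Proposition~\ref{prop:universality}, and then to invoke that proposition directly. Taking the singleton index set $\mathcal{F} = \{(0,\infty)\}$ in the definition of an enlarged submodule gives
\[
  \phi_{\mathcal{F}}(W_\ell) \;=\; 0\cdot R^n + W_\ell[\infty] \;=\; W_\ell[\infty].
\]
Working in the regime $\ell \le \eta n$, the hypotheses of Proposition~\ref{prop:universality} hold verbatim with $R = \Z_p$, $k = \ell$, and the fresh column playing the role of $X = X_\ell$, so that
\[
  \mathbb{P}\bigl(X_\ell \in W_\ell[\infty] \,\big|\, \Z_p^n / W_\ell[\infty] \cong V^\perp \bigr) \;=\; \abs{V^\perp}^{-1} + O(e^{-c\alpha n}).
\]
(For $\eta n < \ell \le n$ the claim is instead elementary: if $X_\ell \in W_\ell[\infty]$ then $X_\ell \bmod p$ lies in the subspace $W_\ell[\infty] \bmod p$ of $\mathbb{F}_p^n$, which has dimension at most $\rank W_\ell \le n - \ell$; conditioning on all but $n - \ell \ge \ell$ appropriately chosen coordinates of $X_\ell$ then bounds this probability by $(1-\alpha)^\ell \le e^{-\alpha \eta n}$.)

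The second step is to check that this conditioning always lands in the degenerate case of Proposition~\ref{prop:universality}. Since $W_\ell[\infty]$ is saturated in $\Z_p^n$, the quotient $\Z_p^n/W_\ell[\infty]$ is a finitely generated torsion-free $\Z_p$-module, hence free; its rank equals $n - \rank W_\ell[\infty] = n - \rank W_\ell \ge n - (n-\ell) = \ell \ge 1$, where we used that $W_\ell$ and $W_\ell[\infty]$ differ by the finite torsion submodule of $\Z_p^n/W_\ell$. Thus for every outcome of $X_{\ell+1}, \dotsc, X_n$ the quotient is infinite, the corresponding $V$ is not cofinite, and Proposition~\ref{prop:universality} yields
\[
  \mathbb{P}\bigl(X_\ell \in W_\ell[\infty] \,\big|\, \Z_p^n/W_\ell[\infty] \cong Q \bigr) \;=\; O(e^{-c\alpha n})
\]
for each isomorphism class $Q$ that can occur, with an absolute implied constant (the main term $\abs{Q}^{-1}$ vanishing as $Q$ is infinite).

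Finally I would undo the conditioning by summing the last estimate over the at most countably many isomorphism classes $Q$ arising; as the implied constant is uniform in $Q$, this gives $\mathbb{P}(X_\ell \in W_\ell[\infty]) = O(e^{-c\alpha n})$. Once Proposition~\ref{prop:universality} is available there is no genuine obstacle here --- the whole difficulty is concentrated in that proposition. The points that still require care are minor: identifying $W[\infty]$ with $\phi_{\{(0,\infty)\}}(W)$; verifying that $\Z_p^n/W_\ell[\infty]$ is genuinely infinite, which is exactly why no analogous bound can hold at $\ell = 0$ (there $W_0[\infty]$ equals $\Z_p^n$ with probability bounded below independently of $n$); and the uniformity of the error used when removing the conditioning. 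It is also worth recording why the soft reduction modulo $p$ does not by itself suffice when $\ell \le \eta n$: it only confines $X_\ell \bmod p$ to a subspace of codimension $\ge \ell$, and for $\ell = o(n)$ this is only weakly improbable unless one simultaneously controls the arithmetic structure of that subspace --- which is precisely the content delivered by the inverse-theorem and swapping arguments inside Proposition~\ref{prop:universality}.
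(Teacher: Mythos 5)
Your argument is correct as a formal deduction from Proposition~\ref{prop:universality} as stated, but it takes a different route from the paper. You set $\mathcal{F}=\{(0,\infty)\}$ so that $\phi_{\mathcal{F}}(W_\ell)=W_\ell[\infty]$ itself, note that $\Z_p^n/W_\ell[\infty]$ is free of rank at least $\ell\geq 1$, hence never cofinite, and invoke the ``in particular'' clause of Proposition~\ref{prop:universality} for non-cofinite quotients, removing the conditioning by total probability; you also (correctly, and more carefully than the paper) dispose of the range $\ell>\eta n$ with an Odlyzko-type bound, since the universality proposition is only stated for $k\leq\eta n$. The paper instead approximates $W_\ell[\infty]$ from outside by the decreasing family $\phi_{\mathcal{F}(L)}(W_\ell)=p^L\Z_p^n+W_\ell[\infty]$, $\mathcal{F}(L)=\{(p^L,\infty)\}$, whose quotients are \emph{finite}, of the form $(\Z/p^L\Z)^j$ with $j\geq\ell$; applying the cofinite case of Proposition~\ref{prop:universality} gives $\mathbb{P}(X_\ell\in\phi_{\mathcal{F}(L)}(W_\ell))\leq p^{-L}+O(e^{-c\alpha n})$ uniformly in $L$, and letting $L\to\infty$ yields the claim. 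The trade-off: the paper's route only ever uses the universality statement for enlarged submodules of finite index, which is the case the machinery of Section~4 (Fourier sums over $N^\perp$, counting of submodules with prescribed finite perpendicular module) is visibly built for, at the cost of a small limiting argument; your route is shorter but rests all its weight on the non-cofinite clause, which in the paper appears only as an ``in particular'' and whose justification is arguably most naturally supplied by exactly the finite-approximation argument the paper carries out here---so there is a mild risk of circularity, or at least of leaning on the least explicitly proved part of Proposition~\ref{prop:universality}. If you want your version to be airtight without that clause, you can graft the paper's $L$-approximation onto your setup with essentially no extra work.
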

\begin{proof}
For each $L \in \Z^+$ set $\mathcal{F}(L) = \{(p^L, \infty)\}$. Then $\phi_{\mathcal{F}(L)}(W_\ell) = p^L R^n + W_\ell[\infty]$ and
\[
  \bigcap_{L=1}^\infty \phi_{\mathcal{F}(L)}(W_\ell) = W_\ell[\infty].
\]
Now $\phi_{\mathcal{F}(L)}(W_\ell)^\perp \cong (\Z/p^L\Z)^j$ for some $j \geq \ell$ because $W_\ell$ is spanned by $n - \ell$ vectors in $\Z_p^n$. Thus by Proposition~\ref{prop:universality} we have
\[
  \mathbb{P}(X_\ell \in \phi_{\mathcal{F}(L)}(W_\ell)) \leq p^{-L} + O(e^{-c \alpha n})
\]
and the result follows from the dominated convergence theorem.
\end{proof}

\begin{proposition} \label{prop:takepparts}
    For $R = \Z/N\Z$ we can factor
    \[
      R^n / W_\ell = \bigoplus_{p \mid N} R_{(p)}^n / (W_\ell)_{(p)}
    \]
    where $R_{(p)}$, $(W_\ell)_{(p)}$ denote the $p$-part of $R$, $W_\ell$ respectively.
\end{proposition}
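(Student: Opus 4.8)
The plan is to reduce the statement to the Chinese Remainder Theorem together with the fact that a module over a finite product of rings splits as the direct sum of modules over the factors. Write $N = \prod_{p \mid N} p^{e_p}$, so that $R = \Z/N\Z \cong \prod_{p \mid N} \Z/p^{e_p}\Z$ as rings. First I would introduce the associated complete system of orthogonal idempotents $\{\varepsilon_p\}_{p \mid N} \subset R$, where $\varepsilon_p$ is the unique element of $R$ congruent to $1 \bmod p^{e_p}$ and to $0 \bmod q^{e_q}$ for every other prime $q \mid N$; these satisfy $\sum_{p \mid N}\varepsilon_p = 1$, $\varepsilon_p \varepsilon_q = 0$ for $p \neq q$, and $\varepsilon_p R = R_{(p)}$, the $p$-primary torsion subgroup of $R$, which is isomorphic to $\Z/p^{e_p}\Z$.

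Next, for any $R$-module $M$ these idempotents furnish a canonical internal direct sum decomposition $M = \bigoplus_{p \mid N} \varepsilon_p M$, and one checks directly that $\varepsilon_p M$ coincides with the $p$-primary component $M_{(p)} = \{m \in M : p^k m = 0 \text{ for some } k \geq 1\}$. Applying this to $M = R^n$ yields $R^n = \bigoplus_{p \mid N} R_{(p)}^n$, and applying it to the submodule $M = W_\ell \subseteq R^n$ yields $W_\ell = \bigoplus_{p \mid N} (W_\ell)_{(p)}$, where $(W_\ell)_{(p)} = \varepsilon_p W_\ell$. The key compatibility is that multiplication by $\varepsilon_p$ is an $R$-linear endomorphism of $R^n$ carrying $W_\ell$ into itself, so that $(W_\ell)_{(p)} = \varepsilon_p W_\ell \subseteq \varepsilon_p R^n = R_{(p)}^n$ and the decomposition of $W_\ell$ sits compatibly inside that of $R^n$.

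Finally, taking quotients commutes with this finite direct sum: since $W_\ell$ decomposes "diagonally" with respect to the decomposition of $R^n$, we obtain the canonical isomorphism
\[
  R^n / W_\ell = \Bigl( \bigoplus_{p \mid N} R_{(p)}^n \Bigr) \Big/ \Bigl( \bigoplus_{p \mid N} (W_\ell)_{(p)} \Bigr) \cong \bigoplus_{p \mid N} R_{(p)}^n / (W_\ell)_{(p)},
\]
which is the asserted factorization. There is no real obstacle here; the only point deserving a word of care is verifying that the $p$-primary piece of the submodule $W_\ell$ genuinely lands in the $p$-primary piece of the ambient $R^n$, which is immediate once one observes that the projection $R^n \to R_{(p)}^n$ is realized by the $R$-linear map $x \mapsto \varepsilon_p x$ and hence sends submodules to submodules.
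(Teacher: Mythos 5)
Your proposal is correct and follows the same route as the paper, which simply invokes the Chinese remainder theorem; you have merely written out the standard idempotent decomposition that makes the CRT splitting of $R^n$, $W_\ell$, and the quotient explicit.
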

\begin{proof}
    This is immediate from the Chinese remainder theorem.
\end{proof}
\begin{proposition} \label{prop:splitfreetorsion}
With probability $1 - O(e^{-c \alpha n})$ there are isomorphisms
    \[
      R^n / W_\ell \cong R^\ell \oplus T_\ell
    \]
where $T_\ell$ is a finite $R$-module with unique isomorphism class.
\end{proposition}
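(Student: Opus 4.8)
The plan is to exhibit a single event $\mathcal{E}$ of probability $1-O(e^{-c\alpha n})$ on which all of the isomorphisms $R^n/W_\ell\cong R^\ell\oplus T_\ell$ hold simultaneously, and then to read each decomposition off from the structure theory of modules over $R$. It pays to treat the two rings separately: for $R=\Z/N\Z$ the statement is essentially formal, and one may take $\mathcal{E}$ to be the whole probability space; for $R=\Z_p$ the entire content is that the free part of $R^n/W_\ell$ has rank \emph{exactly} $\ell$, so that $T_\ell$ is finite.

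For $R=\Z/N\Z$ I would take the $n-\ell$ vectors $X_{\ell+1},\dots,X_n$ as the columns of an $n\times(n-\ell)$ presentation matrix $M$, so that $R^n/W_\ell\cong\coker M$. By Proposition~\ref{prop:takepparts} it suffices to work one prime at a time, and over each $\Z/p^{e}\Z$ the Smith normal form of a lift of $M$ to $\Z_p$ exhibits $\coker M$ as a direct sum of at least $n-(n-\ell)=\ell$ copies of $\Z/p^{e}\Z$ with a complementary finite module; reassembling over the primes dividing $N$ gives $R^n/W_\ell\cong R^\ell\oplus T_\ell$ with $T_\ell$ finite. Uniqueness of $T_\ell$ up to isomorphism is then the cancellation property of the Krull--Schmidt theorem, available because $R$ is Artinian.

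For $R=\Z_p$ the same presentation argument gives $\Z_p^n/W_\ell\cong\Z_p^{n-r_\ell}\oplus F_\ell$ with $r_\ell:=\rank W_\ell\le n-\ell$ and $F_\ell$ finite; the free rank is $n-r_\ell\ge\ell$, so the task is only to force $r_\ell=n-\ell$ for every $\ell$, for then the free rank is exactly $\ell$ and $T_\ell:=F_\ell$ is the (canonical, finite) torsion submodule, of unique isomorphism class. Now $r_\ell=n-\ell$ holds for all $\ell$ as soon as $X_1,\dots,X_n$ are linearly independent over $\Q_p$, and independence fails precisely when some $X_j$ lies in the saturation $W_j[\infty]=(W_j\otimes\Q_p)\cap\Z_p^n$; so I set $\mathcal{E}=\bigcap_{j=1}^n\{X_j\notin W_j[\infty]\}$ and bound $\mathbb{P}(\mathcal{E}^c)\le\sum_{j=1}^n\mathbb{P}(X_j\in W_j[\infty])$. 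For $j\le\eta n$ each term is $O(e^{-c\alpha n})$ by Proposition~\ref{prop:noxintorsion}, whose proof---via Proposition~\ref{prop:universality}---exploits that $\Z_p^n/W_j[\infty]$ is a nonzero \emph{free} $\Z_p$-module, so that the competing uniform probability vanishes. For $j>\eta n$ the submodule $W_j[\infty]$ is pure of rank $\le n-j$, hence its reduction mod $p$ has codimension $\ge j$ in $\mathbb{F}_p^n$, and conditioning on the coordinates of $X_j$ away from a system of pivot positions gives the elementary min-entropy bound $\mathbb{P}(X_j\in W_j[\infty])\le(1-\alpha)^{j}\le e^{-\alpha\eta n}$. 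Summing over $j$ and absorbing the factor $n$ into the exponent yields $\mathbb{P}(\mathcal{E}^c)=O(e^{-c\alpha n})$, and on $\mathcal{E}$ the free rank of $\Z_p^n/W_\ell$ is exactly $\ell$ for every $\ell$, which is what was wanted.

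The one genuinely hard step is the bound on $\mathbb{P}(X_j\in W_j[\infty])$ for small $j$: there the codimension estimate is worthless (for $j=O(1)$ it only gives a constant), and one is forced to route through Proposition~\ref{prop:noxintorsion}, hence through the universality Proposition~\ref{prop:universality}. Everything else---Smith normal form, the Chinese remainder decomposition, Krull--Schmidt, and the large-$j$ min-entropy estimate---is routine.
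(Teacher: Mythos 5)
Your proof is correct, but it takes a different route from the paper's. The paper proceeds by downward induction on $\ell$: assuming $R^n/W_\ell \cong \Z_p^\ell \oplus T_\ell$, it writes $X_\ell$ in coordinates, invokes Proposition~\ref{prop:noxintorsion} to get a coordinate of minimal $p$-valuation, and then explicitly constructs a retraction $\psi$ so that the splitting lemma produces $T_{\ell-1}$ as a quotient of $\Z_p^\ell\oplus T_\ell$; the case $R=\Z/N\Z$ is then handled by CRT and lifting. You instead invoke the structure theory directly: over $\Z_p$ the quotient is automatically $\Z_p^{n-r_\ell}\oplus(\text{finite})$, so the whole statement reduces to the single full-rank event $r_\ell=n-\ell$ for all $\ell$, which you express as $\bigcap_j\{X_j\notin W_j[\infty]\}$ and bound by a union bound — via Proposition~\ref{prop:noxintorsion} for $j\le\eta n$ and via the Odlyzko/min-entropy codimension bound for $j>\eta n$; for $R=\Z/N\Z$ you correctly observe the decomposition is deterministic (Smith normal form of a lift gives at least $\ell$ free summands, Krull--Schmidt gives uniqueness of $T_\ell$). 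Both arguments rest on the same probabilistic input (Proposition~\ref{prop:noxintorsion}, hence Proposition~\ref{prop:universality}), so there is no circularity; what your version buys is a cleaner separation of the algebra (which is probability-free) from the single probabilistic event, and an explicit treatment of the range $\ell>\eta n$, where the paper's blanket appeal to Proposition~\ref{prop:noxintorsion} is only justified by the separate Odlyzko-type estimate of Section~\ref{sec:entropybounds}. What the paper's hands-on inductive splitting buys in exchange is the compatible system of quotient maps $T_\ell \to T_{\ell-1}$ realized inside the fixed coordinates, which it reuses in Propositions~\ref{prop:onlygrows} and~\ref{prop:rankchanges}; your argument yields the isomorphisms of the statement but would leave that compatibility to be re-derived.
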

\begin{proof}
First suppose $R = \Z_p$. We induct downward on $\ell$ starting with $\ell = n$. For $\ell = n$ the result is trivial with $T_n = 0$. If the result is shown for $\ell$, then it suffices to find a finite $\Z_p$-module $T_{\ell-1}$ such that $\Z_p^{\ell-1} \oplus T_{\ell-1} \cong \Z_p^\ell \oplus T_\ell / \langle X_\ell \rangle$. Let $v_1, ..., v_\ell$ be some basis for $\Z_p^\ell$ and write
\[
  X_\ell = a_1 v_1 + \cdots + a_\ell v_\ell + t
\]
with $t \in T_{\ell}$; this decomposition is unique. By Proposition~\ref{prop:noxintorsion} we know that with probability $1 - O(e^{-c \alpha n})$ at least one of the coefficients $a_1, \dotsc, a_\ell$ is non-zero. Suppose without loss of generality that the power of $p$ dividing $a_{\ell}$ is the smallest among $a_1, \dotsc, a_\ell$. Consider the short exact sequence
\begin{center}
\begin{tikzpicture}[description/.style={fill=white,inner sep=2pt}]
	\matrix (m) [matrix of math nodes, row sep=3em, column sep=2.5em,
				 text height=1.5ex, text depth=0.25ex]
    { 0 & \Z_p^{\ell-1} & \Z_p^\ell \oplus T_\ell / \langle X_\ell \rangle & \Z_p^\ell \oplus T_\ell / \langle X_\ell, v_1, ..., v_{\ell-1} \rangle & 0 \\ };
    \path[->,font=\scriptsize]
        (m-1-1) edge (m-1-2)
        (m-1-2) edge node[auto] {$ \iota $} (m-1-3)
        (m-1-3) edge node[auto] {$ \pi $} (m-1-4)
        (m-1-4) edge (m-1-5);
\end{tikzpicture}
\end{center}
where $\iota$ is the inclusion map on $v_1, ..., v_{\ell-1}$ and $\pi$ is the indicated quotient map. It is easy to see that $\Z_p^\ell \oplus T_\ell / \langle X_\ell, v_1, ..., v_{\ell-1} \rangle$ is finite: each element of $T_\ell$ is torsion, while the generators of $\Z_p^\ell$ either map to $0$ or are torsion. Let
\[
  T_{\ell - 1} := \Z_p^\ell \oplus T_\ell / \langle X_\ell, v_1, ..., v_{\ell-1} \rangle.
\]
By the splitting lemma, it suffices to define a map of $\Z_p$-modules $\psi : \Z_p^\ell \oplus T_\ell / \langle X_\ell \rangle \to \Z_p^{\ell - 1}$ such that $\psi \circ \iota$ is the identity on $\Z_p^{\ell - 1}$.

Indeed, for $w \in \Z_p^\ell \oplus T_\ell / \langle X_\ell \rangle$ choose a representative
\[
  w = b_1 v_1 + \cdots + b_\ell v_\ell + t'
\]
where $t' \in T_\ell$. Now $b_\ell = p^r \beta$ for some $r \geq 0$ and $\beta \in \Z_p^\times$; similarly let $a_\ell = p^s \alpha$ for some $s \geq 0$ and $\alpha \in \Z_p^\times$. If $r \geq s$ then $b_\ell/a_\ell \in \Z_p$ and so there is a unique representative
\[
  w - \frac{b_\ell}{a_\ell} X_\ell \in \langle v_1, \dotsc, v_{\ell-1} \rangle \oplus T_\ell.
\]
We then let $\psi(w)$ be the projection onto $\langle v_1, ..., v_{\ell-1} \rangle$ of this representative. If $r < s$, then $p^{s-r} b_\ell / a_\ell \in \Z_p$ and so there is a unique representative of $p^{s-r}w$ given by
\[
  p^{s-r} w - \frac{p^{s-r} b_\ell}{a_\ell} X_\ell \in \langle v_1, \dotsc, v_{\ell-1} \rangle \oplus T_\ell.
\]
Furthermore, the coefficients of $v_1, \dotsc, v_{\ell-1}$ of this representative must all be divisible by $p^{s-r}$ (this is because $a_\ell$ generates the largest ideal), so we let $\psi(w)$ be $p^{-(s-r)}$ times the projection onto $\langle v_1, ..., v_{\ell-1} \rangle$ of this representative. It is clear that the resulting map is linear and commutes with multiplication by elements of $\Z_p$, as required.

If $R = \Z/N\Z$, by the chinese remainder theorem it suffices to prove the result for $N = p^r$ for some $r \geq 1$. But this follows from the previous case by lifting to representatives in $\Z_p$. \qedhere
\end{proof}

As a consequence we have the following.
\begin{corollary} \label{cor:sumoversequences}
    Let $A$ be as above and let $G$ be a finite abelian $R$-module. For each $1 \leq \ell \leq n$, let $T_\ell$ be constructed as in Proposition~\ref{prop:splitfreetorsion}. Furthermore let $T_n = 0$ and $T_0 = \coker A$. Then we have
    \[
      \mathbb{P}(\coker A \cong G) = \sum_{H_{n-1}, \ldots, H_1} \mathbb{P}(\coker A \cong G \text{ and } T_j \cong H_j \text{ for all } j) + O(e^{-c \alpha n}),
    \]
where the sum is over all sequences of finite $R$-modules.
\end{corollary}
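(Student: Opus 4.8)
The plan is to recognize the right-hand sum as the probability of $\{\coker A \cong G\}$ intersected with a ``good'' event on which all the modules $T_\ell$ are simultaneously defined, and to bound the complementary error using Proposition~\ref{prop:splitfreetorsion}.

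First I would introduce, for each $1 \le \ell \le n-1$, the event $\mathcal{E}_\ell$ that $R^n/W_\ell \cong R^\ell \oplus T_\ell$ for some finite $R$-module $T_\ell$; on $\mathcal{E}_\ell$ the isomorphism class of $T_\ell$ is unambiguous, being forced to equal the torsion part of $R^n/W_\ell$ once the free rank equals $\ell$. This is precisely the content of Proposition~\ref{prop:splitfreetorsion}, which also gives $\mathbb{P}(\mathcal{E}_\ell^c) = O(e^{-c\alpha n})$. Put $\mathcal{E} := \bigcap_{\ell=1}^{n-1}\mathcal{E}_\ell$; a union bound over the fewer than $n$ indices yields $\mathbb{P}(\mathcal{E}^c) = O(n e^{-c\alpha n}) = O(e^{-c\alpha n})$, absorbing the polynomial factor into the exponential rate exactly as in the proof of Proposition~\ref{prop:splitfreetorsion}. (No good event is needed at the endpoints: $T_n = 0$ always, and $\{\coker A \cong G\}$ already forces $T_0 = \coker A$ to be finite since $G$ is finite.)

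Next I would check that the events
\[
  E_{(H_j)} := \{\, T_j \cong H_j \text{ for all } 1 \le j \le n-1 \,\},
\]
indexed by sequences $(H_1, \ldots, H_{n-1})$ of isomorphism classes of finite $R$-modules, form a countable partition of $\mathcal{E}$. Each $E_{(H_j)}$ is contained in $\mathcal{E}$ because $T_j$ is only constructed (as in Proposition~\ref{prop:splitfreetorsion}) on $\mathcal{E}_j \supseteq \mathcal{E}$; the $E_{(H_j)}$ are pairwise disjoint, since two distinct sequences disagree at some $j$ where $T_j$ has a single well-defined class; and they cover $\mathcal{E}$, every outcome there lying in the block with $H_j = T_j$. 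Intersecting with $\{\coker A \cong G\}$ and using countable additivity (the partial sums being bounded by $1$, so the series converges),
\[
  \sum_{H_{n-1}, \ldots, H_1} \mathbb{P}\bigl(\coker A \cong G \wedge E_{(H_j)}\bigr) = \mathbb{P}\bigl(\coker A \cong G \wedge \mathcal{E}\bigr).
\]
Since $\bigl|\mathbb{P}(\coker A \cong G) - \mathbb{P}(\coker A \cong G \wedge \mathcal{E})\bigr| = \mathbb{P}(\coker A \cong G \wedge \mathcal{E}^c) \le \mathbb{P}(\mathcal{E}^c) = O(e^{-c\alpha n})$, the corollary follows.

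I expect no genuine obstacle: all the real work sits in Proposition~\ref{prop:splitfreetorsion}, and what remains is the bookkeeping above. The one point deserving care is that the $T_\ell$ be defined \emph{simultaneously} across $\ell$ — handled by the union bound — together with the correlated observation that the $E_{(H_j)}$ exhaust only the good event $\mathcal{E}$, not the whole probability space, which is exactly why the error term appears (and it cannot be removed, since $\coker A$ might a priori be isomorphic to $G$ on the negligible event $\mathcal{E}^c$).
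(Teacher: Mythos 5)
Your argument is correct and is exactly the bookkeeping the paper leaves implicit: the corollary is stated as an immediate consequence of Proposition~\ref{prop:splitfreetorsion}, and your decomposition into the good event $\mathcal{E}$ (union bound over $\ell$, with the polynomial factor absorbed into the exponential as the paper does throughout) plus the partition of $\mathcal{E}$ by the sequence of isomorphism classes is the intended route. The only cosmetic remark is that for $R=\Z/N\Z$ the well-definedness of the class of $T_\ell$ is best phrased via cancellation of the $R^\ell$ summand (everything is torsion there) rather than as ``the torsion part,'' but this is already asserted in Proposition~\ref{prop:splitfreetorsion} and does not affect your proof.
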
    

Next, we will restrict further the sequences $H_{n-1}, \ldots, H_1$ appearing in the previous corollary.

\subsection{Young diagrams}
Finite $R$-modules can be classified by their induced Young diagram. Recall that a \defn{Young diagram} (which, when the meaning is obvious, we will call a \defn{diagram}) is a partition, where to the set $[m] = \{1, \ldots, m\}$ and the partition $j_1 \geq j_2 \geq \cdots \geq j_t > 0$ with $j_1 + \cdots + j_t = m$ we have the corresponding Young diagram with $t$ rows, where in the $k$th row there are $j_k$ boxes. For example, the partition of $[7]$ into $4 \geq 2 \geq 1$ corresponds to the Young diagram
\[
\begin{tikzpicture}[x=\youngscale,y=\youngscale,every node/.style={minimum height=\youngscale, minimum width=\youngscale,draw}]
\draw (0.5,2.5) node {};
\draw (1.5,2.5) node {};
\draw (2.5,2.5) node {};
\draw (3.5,2.5) node {};
\draw (0.5,1.5) node {};
\draw (1.5,1.5) node {};
\draw (0.5,0.5) node {};
\end{tikzpicture}
\]
For $R = \Z_p$, any finite $\Z_p$-module $T$ is a finite abelian $p$-group; by the fundamental theorem of finite abelian groups, we know that $T$ is isomorphic to
\[
  T \cong (\Z/p^{j_1}\Z) \oplus (\Z/p^{j_2}\Z) \oplus \cdots \oplus (\Z/p^{j_t}\Z)
\]
with $j_1 \geq j_2 \geq \cdots \geq j_t > 0$, and this representation (but not the isomorphism!) is unique. We therefore have a one-to-one correspondence between partitions $j_1 \geq j_2 \geq \cdots \geq j_t > 0$ of $[\log_p M]$ and finite $\Z_p$-modules of cardinality $M$.

For $R = \Z/N\Z$, if we factor $R = \oplus_{p^k \| N} \Z/p^k\Z$ then we have a correspondence between finite $R$-modules $T$ and tuples of Young diagrams, one for each prime dividing $N$, such that there are at most $k$ columns in the Young diagram for $p$ if $p^k \| N$. We will call the diagram of $T$ corresponding to the prime $p$ the \defn{$p$-diagram} of $T$.

In the previous part we showed that the matrix $A$ induces a sequence $T_n, \dotsc, T_0$ of finite $R$-modules with probability $1 - O(e^{-c \alpha n})$. It turns out this sequence is strictly growing in the sense that $T_\ell \hookrightarrow T_{\ell - 1}$.
\begin{proposition} \label{prop:onlygrows}
With probability $1 - O(e^{-c \alpha n})$ there is an injection $T_\ell \hookrightarrow T_{\ell - 1}$.
\end{proposition}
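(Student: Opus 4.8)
The goal is to show that, conditionally on the high-probability event of Proposition~\ref{prop:splitfreetorsion}, the finite part grows: $T_\ell \hookrightarrow T_{\ell-1}$. Let me think about what the map $\phi_\ell: R^n/W_\ell \to R^n/W_{\ell-1}$ does. Its kernel is the $R$-span of $X_\ell$ inside $R^n/W_\ell$. Under the splitting $R^n/W_\ell \cong R^\ell \oplus T_\ell$, write $X_\ell = (a_1,\dots,a_\ell,t)$ with $a_i \in R$ and $t \in T_\ell$. We want to understand the image of $T_\ell$ under $\phi_\ell$.

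The key dichotomy is whether the free coefficients $a_1,\dots,a_\ell$ generate the unit ideal (for $R=\Z_p$: at least one $a_i$ is a unit). On the high-probability event, by Proposition~\ref{prop:noxintorsion} at least one $a_i\neq 0$; but I actually want more—I want at least one $a_i$ to be a unit. This is exactly what the argument for Proposition~\ref{prop:splitfreetorsion} gives when combined with universality: the event that all $a_i$ lie in $pR$ is the event that $X_\ell \in \phi_{\mathcal F}(W_\ell)$ for $\mathcal F = \{(p,\infty)\}$ (roughly—$pR^n + W_\ell[\infty]$), whose quotient is a vector space over $\mathbb F_p$ of dimension $\geq \ell \geq 1$, so this has probability $p^{-\ell}+O(e^{-c\alpha n}) = O(e^{-c\alpha n})$ for $\ell$ in the relevant range—wait, $\ell$ can be as large as $\eta n$, so $p^{-\ell}$ is not small for small $\ell$; but small $\ell$ corresponds to $\ell \geq 1$ so $p^{-\ell}\leq p^{-1}$, which is not enough. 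Let me reconsider: actually I should just use that the coefficient of smallest $p$-valuation among $a_1,\dots,a_\ell$ being a unit is what's needed, and the relevant bad event is captured differently. The cleaner route: on the event of Proposition~\ref{prop:splitfreetorsion} for index $\ell-1$, we already know $R^n/W_{\ell-1}\cong R^{\ell-1}\oplus T_{\ell-1}$ splits with a free part of rank exactly $\ell-1$, so $X_\ell$ together with a basis of the free part of $R^n/W_\ell$ must generate a free summand of rank $\ell-1$ inside $R^\ell$; this forces (after change of basis as in the proof of Proposition~\ref{prop:splitfreetorsion}) that some $a_i$ is a unit, i.e.\ we may assume $a_\ell$ is a unit. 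So I would simply condition on the event of Proposition~\ref{prop:splitfreetorsion} at indices $\ell$ and $\ell-1$ simultaneously, which has probability $1-O(e^{-c\alpha n})$, and on that event $a_\ell$ (after reindexing) is a unit.

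Given that $a_\ell$ is a unit, change the basis $v_1,\dots,v_\ell$ of the free part so that $v_\ell \mapsto X_\ell$; then $R^n/W_\ell \cong R^{\ell-1}\oplus R v_\ell \oplus T_\ell$ with $X_\ell = v_\ell$ (absorbing $t$ into $v_\ell$: replace $v_\ell$ by $a_\ell^{-1}(v_\ell \text{-lift})$, legitimate since $t$ is torsion and $v_\ell$ generates a free summand, so $\langle X_\ell\rangle = \langle a_\ell v_\ell + (\text{torsion})\rangle$ is a free rank-one summand complementary to $R^{\ell-1}\oplus T_\ell$—here I use that a rank-one free submodule whose generator has unit leading coefficient splits off). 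Then
\[
  R^n/W_{\ell-1} \cong (R^n/W_\ell)/\langle X_\ell\rangle \cong R^{\ell-1}\oplus T_\ell,
\]
so in fact $T_{\ell-1}\cong T_\ell$ on this event, which certainly gives the injection. For $R=\Z/N\Z$, Proposition~\ref{prop:takepparts} reduces to $N=p^r$ and the same argument applies, or one lifts to $\Z_p$ as in Proposition~\ref{prop:splitfreetorsion}.

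The main obstacle is the splitting step: justifying that when $a_\ell \in R^\times$, the cyclic submodule $\langle X_\ell \rangle = \langle a_1v_1+\cdots+a_\ell v_\ell + t\rangle$ is a free rank-one direct summand of $R^\ell \oplus T_\ell$ complementary to a copy of $R^{\ell-1}\oplus T_\ell$. This is a routine module-theoretic fact (change of basis $v_\ell \rightsquigarrow a_\ell^{-1}X_\ell$, which is an automorphism of $R^\ell\oplus T_\ell$ since $a_\ell$ is a unit—note $a_\ell^{-1}X_\ell = v_\ell + a_\ell^{-1}(a_1v_1+\cdots+a_{\ell-1}v_{\ell-1}) + a_\ell^{-1}t$ differs from $v_\ell$ by something in $\langle v_1,\dots,v_{\ell-1}\rangle\oplus T_\ell$), but it requires care because $T_\ell$ is not free; the point is that we are only modifying the free coordinates and scaling, so the relevant matrix of the base change is block-triangular with unit diagonal on the free block and identity on $T_\ell$. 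Once this is in hand, the quotient $R^n/W_{\ell-1}$ is literally $R^{\ell-1}\oplus T_\ell$, giving $T_{\ell-1}\cong T_\ell$ and a fortiori the desired injection, uniformly over the high-probability event.
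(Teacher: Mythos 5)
There is a genuine gap. Your key claim — that on an event of probability $1-O(e^{-c\alpha n})$ some coefficient $a_i$ of $X_\ell$ is a unit, whence $T_{\ell-1}\cong T_\ell$ — is false, and the deduction you use to get it does not work. Knowing that $R^n/W_{\ell-1}$ splits as $R^{\ell-1}\oplus T_{\ell-1}$ with free rank exactly $\ell-1$ only forces the free component of $X_\ell$ to be \emph{nonzero}, not unimodular: for $R=\Z_p$ take $X_\ell = p\,v_1$, so that
\[
  (R^\ell\oplus T_\ell)/\langle X_\ell\rangle \;\cong\; R^{\ell-1}\oplus(\Z/p\Z)\oplus T_\ell,
\]
which has free rank $\ell-1$ even though no $a_i$ is a unit and the torsion strictly grows. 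The probability that all $a_i$ lie in $p\Z_p$ is of order $p^{-\ell}$ — exactly the obstruction you noticed and then tried to route around — and for bounded $\ell$ this is a constant, not exponentially small. Indeed, if your stronger conclusion $T_{\ell-1}\cong T_\ell$ held with probability $1-O(e^{-c\alpha n})$ at every step, a union bound over $\ell$ would make $\coker A$ trivial with probability $1-O(n e^{-c\alpha n})$, contradicting Theorem~\ref{thm:padiccokernel}: the Cohen--Lenstra measure assigns the trivial group probability $\prod_{k\geq 1}(1-p^{-k})<1$. The steps with small $\ell$ where all $a_i$ have positive valuation are precisely the ones that create nontrivial torsion, so they cannot be pushed into the exceptional event.

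The statement only asks for an injection, and that needs much less than your unit-coefficient dichotomy. The paper's argument: the kernel of $\phi_\ell : R^\ell\oplus T_\ell \to R^{\ell-1}\oplus T_{\ell-1}$ is the span of the image of $X_\ell$; by Proposition~\ref{prop:noxintorsion}, with probability $1-O(e^{-c\alpha n})$ the free component of $X_\ell$ is nonzero, and since $\Z_p$ is a domain this already gives $\langle X_\ell\rangle\cap T_\ell=\{0\}$, so $\phi_\ell$ restricted to $T_\ell$ is injective; its image consists of torsion elements and hence lies in $T_{\ell-1}$. The case $R=\Z/N\Z$ reduces to the $p$-parts. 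So the change-of-basis and splitting machinery you set up is not needed for the injection, and it cannot be repaired to give the isomorphism $T_{\ell-1}\cong T_\ell$, which is simply not true with the claimed probability.
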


\begin{proof}
We see that $\phi_\ell : R^n / W_\ell \to R^n / W_{\ell - 1}$ commutes with the isomorphisms from Proposition~\ref{prop:splitfreetorsion} to give a map
\[
  \phi_\ell : R^\ell \oplus T_\ell \to R^{\ell-1} \oplus T_{\ell - 1}
\]
with kernel equal to $\langle X_\ell \rangle$, the span of the image of $X_\ell$ in $R^\ell \oplus T_\ell$.

If $R = \Z_p$ then by Proposition~\ref{prop:noxintorsion} we have $X_\ell \notin T_\ell$ with probability $1 - O(e^{-c \alpha n})$. In particular $\langle X_\ell \rangle \cap T_\ell = \{0\}$, so $\phi_\ell$ is injective when restricted to $T_\ell$. Since all elements of $T_\ell$ have finite order their images must have finite order as well, and in particular must be contained in $T_{\ell - 1}$. Thus we have $\phi_\ell : T_\ell \hookrightarrow T_{\ell - 1}$ as required.

If $R = \Z/N\Z$, it suffices to prove that $(T_\ell)_{(p)} \hookrightarrow (T_{\ell-1})_{(p)}$. This is obvious after changing coordinates by the classification theorem for finite abelian $p$-groups.
\end{proof}

For any finite $R$-module $T$, we define the $j$-rank of $T$ at $p$ to be
\[
  r_{j,(p)}(T) := \rank_{\mathbb{F}_p}(p^{j-1} T / p^j T).
\]
From the correspondence we see that $r_{j,(p)}$ is equal to the length of the $j$th column of the $p$-diagram of $T$. It turns out that there is considerable rigidity in the change of $r_{j,(p)}(T_\ell)$ as $\ell$ varies.
\begin{proposition} \label{prop:rankchanges}
For all $j \geq 1$, $1 \leq \ell \leq n$, and $p$ we have
\[
  r_{j,(p)}(T_\ell) \leq r_{j,(p)}(T_{\ell - 1}) \leq r_{j,(p)}(T_\ell) + 1.
\]
Furthermore, $r_{j,(p)}(T_{\ell - 1}) = r_{j,(p)}(T_\ell) + 1$ if and only if
\[
\langle X_\ell \rangle \cap p^{j-1} (R^\ell \oplus T_\ell) \bmod p^j = \{0\}
\]
\end{proposition}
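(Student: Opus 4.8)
The plan is to analyze the map $\phi_\ell : R^\ell \oplus T_\ell \to R^{\ell-1} \oplus T_{\ell-1}$ from Proposition~\ref{prop:onlygrows}, whose kernel is the cyclic module $\langle X_\ell \rangle$. Since everything decomposes over the primes dividing $N$ (Proposition~\ref{prop:takepparts}) and the $\Z_p$ case is the local model, I would fix a prime $p$ and work with the $p$-diagrams throughout, suppressing $p$ from the notation. The key structural observation is that $r_{j}(T) = \rank_{\mathbb{F}_p}(p^{j-1}T/p^jT)$ is the length of the $j$th column, and that $\phi_\ell$ restricted to $T_\ell$ is injective with probability $1 - O(e^{-c\alpha n})$ (again by Proposition~\ref{prop:onlygrows} together with Proposition~\ref{prop:noxintorsion}); so on this high-probability event $T_\ell$ embeds in $T_{\ell-1}$ and $T_{\ell-1}/T_\ell$ is a cyclic module (a quotient of $R^\ell \oplus T_\ell$ by $\langle X_\ell \rangle$ modulo the torsion-splitting).

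The first main step is the upper bound $r_j(T_{\ell-1}) \leq r_j(T_\ell) + 1$. Here I would use the embedding $\iota : T_\ell \hookrightarrow T_{\ell-1}$ and the fact that the cokernel $C := T_{\ell-1}/\iota(T_\ell)$ is cyclic (generated by the image of a single lift of $X_\ell$). Applying the right-exact functor $p^{j-1}(-)/p^j(-) = (-) \otimes \Z/p\Z$ after twisting, or more carefully chasing the snake lemma applied to multiplication by $p$ on the short exact sequence $0 \to T_\ell \to T_{\ell-1} \to C \to 0$, shows that $r_j(T_{\ell-1})$ exceeds $r_j(T_\ell)$ by at most $\dim_{\mathbb{F}_p}(C/pC) \leq 1$, since $C$ is cyclic. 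The lower bound $r_j(T_\ell) \leq r_j(T_{\ell-1})$ is immediate from the injectivity of $\iota$ and the fact that an injection of $R$-modules induces an injection on the subquotients $p^{j-1}(-)/p^j(-)$ once we observe it stays injective on $p$-torsion layers — this is a standard consequence of the classification of finite abelian $p$-groups (choose compatible cyclic decompositions).

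The second main step is the ``if and only if'' characterization. Write $T_{\ell-1} \cong (R^\ell \oplus T_\ell)/\langle X_\ell \rangle$ (on the good event). The column length $r_j(T_{\ell-1})$ increases precisely when the single new generator $X_\ell$ contributes a fresh basis vector to the $j$th column, i.e.\ when passing to the quotient by $\langle X_\ell \rangle$ does \emph{not} collapse a rank in degree $j$. I would make this precise by comparing the elementary divisor structure: the $j$th column grows iff $X_\ell$, viewed in $R^\ell \oplus T_\ell$, is \emph{not} in the submodule $p(R^\ell \oplus T_\ell) + \langle \text{the part of } X_\ell \text{ seen in lower degrees}\rangle$ at the relevant level — which after unwinding the bookkeeping is exactly the condition $\langle X_\ell \rangle \cap p^{j-1}(R^\ell \oplus T_\ell) \equiv \{0\} \bmod p^j$. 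The cleanest route is to note $\langle X_\ell \rangle \cap p^{j-1}M \bmod p^j M$ is either $\{0\}$ or a one-dimensional $\mathbb{F}_p$-space (cyclic), and that this space is nonzero iff some $p^{j-1}$-multiple of a generator of $\langle X_\ell\rangle$ lies in $p^{j-1}M$ but the generator's contribution to $p^{j-1}M/p^jM$ already vanished — the two cases correspond precisely to whether $X_\ell$ ``uses up'' a column-$j$ relation or not.

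The step I expect to be the main obstacle is the bookkeeping in this last equivalence: translating the geometric statement ``the $j$th column of the Young diagram grows by one'' into the module-theoretic intersection condition requires carefully tracking how the elementary divisors of $T_{\ell-1}$ interleave with those of $T_\ell$ when a single cyclic module is factored out, and verifying that the free summands $R^\ell$ contribute nothing to the $p$-torsion layers in a way compatible with the splitting from Proposition~\ref{prop:splitfreetorsion}. I would handle this by fixing an explicit basis adapted to the cyclic decomposition of $T_\ell$ (as in the proof of Proposition~\ref{prop:splitfreetorsion}), writing $X_\ell$ in this basis, and reading off the Smith normal form of the presentation matrix of $T_{\ell-1}$; the column-length comparison then reduces to a direct inspection of which diagonal entry the new relation $X_\ell$ modifies.
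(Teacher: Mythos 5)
Your first step (the chain of inequalities) is essentially sound, though you take a different route from the paper: you use the embedding $T_\ell \hookrightarrow T_{\ell-1}$ together with the cyclicity of $T_{\ell-1}/\phi_\ell(T_\ell)$, whereas the paper gets the upper bound from a single surjection (see below). One small repair: your justification of $r_j(T_\ell) \leq r_j(T_{\ell-1})$ — ``an injection induces an injection on the subquotients $p^{j-1}(-)/p^j(-)$'' — is false as stated (take $p\Z/p^2\Z \subset \Z/p^2\Z$ and $j=1$: the induced map on $(-)/p(-)$ is zero). The correct one-line argument is that $r_j(T) = \dim_{\mathbb{F}_p}\bigl(p^{j-1}T\bigr)[p]$ and $\bigl(p^{j-1}T_\ell\bigr)[p] \hookrightarrow \bigl(p^{j-1}T_{\ell-1}\bigr)[p]$. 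Likewise your upper bound is fine once phrased as $p^{j-1}T_{\ell-1} = p^{j-1}\phi_\ell(T_\ell) + \langle p^{j-1}b\rangle$ for a single generator $b$ of the cyclic quotient; the snake lemma ``applied to multiplication by $p$'' only sees the $j=1$ layer directly, and the naive twist $0 \to p^{j-1}T_\ell \to p^{j-1}T_{\ell-1} \to p^{j-1}C \to 0$ is not exact in general, so that route needs the generator argument anyway.

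The genuine gap is the ``if and only if''. Your ``cleanest route'' paragraph restates the claim in words (``the two cases correspond precisely to whether $X_\ell$ uses up a column-$j$ relation'') rather than proving it, and the fallback — reading off the Smith normal form of a presentation of $T_{\ell-1}$ — is left entirely as a plan; note also that the condition in the statement lives in the full module $M := R^\ell \oplus T_\ell$, not in $T_\ell$, so an argument run on the torsion parts alone still has to account for $\langle X_\ell\rangle$ meeting the free summand, which is exactly the bookkeeping you defer. The missing idea, which is the paper's entire proof, is to use one map: restrict $\phi_\ell$ (surjective with kernel $\langle X_\ell\rangle$, by Proposition~\ref{prop:onlygrows}) to $p^{j-1}M$ and reduce mod $p^j$ to obtain a surjection of $\mathbb{F}_p$-vector spaces $\widetilde\phi : p^{j-1}M/p^jM \to p^{j-1}M'/p^jM'$ with $M' = R^{\ell-1}\oplus T_{\ell-1}$. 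The two sides have dimensions $\ell + r_j(T_\ell)$ and $\ell-1+r_j(T_{\ell-1})$, so surjectivity alone gives the upper bound (making the cyclic-cokernel machinery unnecessary), and $r_j(T_{\ell-1}) = r_j(T_\ell)+1$ holds precisely when $\widetilde\phi$ is injective. Since the $\phi_\ell$-preimage of $p^jM'$ is $p^jM + \langle X_\ell\rangle$, the kernel of $\widetilde\phi$ is $\bigl(p^{j-1}M \cap (p^jM + \langle X_\ell\rangle)\bigr)/p^jM = \bigl(p^jM + (\langle X_\ell\rangle \cap p^{j-1}M)\bigr)/p^jM$, which vanishes exactly when $\langle X_\ell\rangle \cap p^{j-1}M \subseteq p^jM$, i.e.\ the stated condition. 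Without this identification (or an actually executed normal-form computation) the equivalence — the part of the proposition that Proposition~\ref{prop:rgrowsmeansE} relies on — remains unproved in your write-up.
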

\begin{proof}
Let $\phi_\ell : R^\ell \oplus T_\ell \to R^{\ell - 1} \oplus T_{\ell - 1}$ be the map from the proof of Proposition~\ref{prop:onlygrows}. We can restrict $\phi_\ell$ to the submodule $p^{j-1}(R^\ell \oplus T_\ell)$ to derive a map $\phi_\ell' : p^{j-1}(R^\ell \oplus T_\ell) \rightarrow p^{j-1}(R^{\ell - 1} \oplus T_{\ell - 1})$. We can then quotient by $p^j$ and note that $p^j(R^\ell \oplus T_\ell)$ maps to zero to derive a map
\[
  \widetilde{\phi} : p^{j-1}(R^\ell \oplus T_\ell) / p^j (R^\ell \oplus T_\ell) \rightarrow p^{j-1}(R^{\ell-1} \oplus T_{\ell-1}) / p^j (R^{\ell-1} \oplus T_{\ell-1})
\]
This is a map of finite dimensional vector spaces over $\mathbb{F}_p$. The quotients factor over the sums so we can write
\[
\widetilde{\phi} : \mathbb{F}_p^\ell \oplus (p^{j-1} T_\ell / p^j T_\ell) \rightarrow \mathbb{F}_p^{\ell - 1} \oplus (p^{j-1} T_{\ell - 1} / p^j T_{\ell - 1})
\]
$\widetilde{\phi}$ is the composition of two quotient maps so it is surjective. Therefore, by the definition of the $j$-rank, we have
\[
\ell + r_j(T_\ell) \geq \ell - 1 + r_j(T_{\ell - 1}).
\]
The other inequality follows immediately from Proposition~\ref{prop:onlygrows}.

For the second statement, note that $\widetilde{\phi}$ is an isomorphism if and only if the kernel of $\phi$ restricted to $p^{j-1}(R^\ell \oplus T_\ell)$ is contained in $p^j(R^\ell \oplus T_\ell)$.
\end{proof}
This last proposition characterizes sequences $H_n, \dotsc, H_0 = \coker A$ which arise from sequences $T_n, \dotsc, T_0 = \coker A$.

Young diagrams themselves offer no advantages over classical notation for partitions. However, as our sequence of finite modules $T_n, \ldots, T_0$ induces a sequence of nested partitions, we can record this information as a Young tableau. On a Young diagram $\lambda$, we define a \defn{numbering} of $\lambda$ from $[n]$ to be an assignment, for each box in the diagram, of an integer from $[n]$. We then say that $\lambda$ is the \defn{shape} of the numbering. We define a \defn{semi-standard tableau} (or just \textbf{tableau}) of shape $\lambda$ to be a numbering which is weakly decreasing along rows and strictly decreasing along columns; for example,
\[
\begin{tikzpicture}[x=\youngscale,y=\youngscale,every node/.style={minimum height=\youngscale, minimum width=\youngscale,draw}]
\draw (0.5,1) node {4};
\draw (1.5,1) node {3};
\draw (2.5,1) node {3};
\draw (3.5,1) node {1};
\draw (0.5,0) node {2};
\draw (1.5,0) node {2};
\draw (0.5,-1) node {1};
\end{tikzpicture}
\]
is a tableau of shape $\lambda = 4 \geq 2 \geq 1$. We will write $H / \lambda$ to indicate that $H$ is a tableau of shape $\lambda$. Finally, to a tableau $H / \lambda$ we define the \defn{sub-diagram of $H$ above $\ell$} to be the diagram consisting of those boxes of $H$ whose labels $j$ satisfy $j \geq \ell$. For example, the sub-diagram of the previous tableau above $2$ is
\[
\begin{tikzpicture}[x=\youngscale,y=\youngscale,every node/.style={minimum height=\youngscale, minimum width=\youngscale,draw}]
\draw (0.5,1) node {};
\draw (1.5,1) node {};
\draw (2.5,1) node {};
\draw (0.5,0) node {};
\draw (1.5,0) node {};
\end{tikzpicture}
\]
We will denote the sub-diagram of a tableau $H$ at $\ell$ by $H_\ell$.

As promised above, we can record the sequence of isomorphism classes in a tableau.
\begin{corollary} \label{cor:padictableau}
    Let $A$ be an $n \times n$ matrix over $\Z_p$ and let $\lambda$ denote the diagram of $\coker A$. Then there is a one-to-one correspondence between sequences $T_n, \dotsc, T_0$ with $T_n = \{0\}$ and $T_0 = \coker A$ which can occur as the partial cokernels of $A$ and tableaux $H/\lambda$ from $[n]$, such that $T_\ell$ has diagram equal to $H_\ell$.
\end{corollary}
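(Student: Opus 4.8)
The plan is to make the correspondence completely explicit in both directions and then observe that the two maps are mutually inverse; almost everything is already forced by Propositions~\ref{prop:onlygrows} and \ref{prop:rankchanges}, so the work is essentially bookkeeping. Starting from a sequence $T_n, \dotsc, T_0$ occurring as the partial cokernels of $A$ (so $T_n = 0$, $T_0 = \coker A$), write $\mu^{(\ell)}$ for the $p$-diagram of $T_\ell$, so that $\mu^{(0)} = \lambda$ and $\mu^{(n)} = \emptyset$. By Proposition~\ref{prop:onlygrows} each $T_\ell \hookrightarrow T_{\ell-1}$, so by the Young-diagram classification of finite $\Z_p$-modules the diagrams are nested, $\mu^{(n)} \subseteq \mu^{(n-1)} \subseteq \dotsb \subseteq \mu^{(0)}$; and by Proposition~\ref{prop:rankchanges} each column length $r_{j,(p)}$ grows by at most one in passing from $\mu^{(\ell)}$ to $\mu^{(\ell-1)}$, which is exactly the statement that each skew shape $\mu^{(\ell-1)}/\mu^{(\ell)}$ is a horizontal strip. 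I then define the numbering $H$ of $\lambda$ by giving box $b$ the label $\ell(b) := \max\{\ell : b \in \mu^{(\ell)}\}$ (the maximum exists because $b \in \mu^{(0)}$ and is at most $n-1$ because $\mu^{(n)} = \emptyset$; this takes $n$ values, which I identify with $[n]$ of the statement up to an evident shift). Since the diagrams are nested, $\ell(b) \ge \ell$ is equivalent to $b \in \mu^{(\ell)}$, so by construction $H_\ell = \{b : \ell(b) \ge \ell\} = \mu^{(\ell)} = \operatorname{diag} T_\ell$, as required.

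Next I would check that $H$ is a tableau in the sense of the paper. If $b$ lies weakly to the left of $b'$ in the same row, then any Young diagram containing $b'$ also contains $b$, hence $\ell(b) \ge \ell(b')$: labels are weakly decreasing along rows. If $b$ lies immediately above $b'$ in a column, the same remark gives $\ell(b) \ge \ell(b')$; and if they were equal to some common value $m$, then both $b$ and $b'$ would lie in $\mu^{(m)}$ but not in $\mu^{(m+1)}$, so the skew shape $\mu^{(m)}/\mu^{(m+1)}$ would contain two boxes in one column, contradicting the horizontal-strip property established above. Hence labels are strictly decreasing down columns, so $H$ is a tableau of shape $\lambda$.

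For the reverse map, given a tableau $H/\lambda$, I let $T_\ell$ be the finite $\Z_p$-module whose $p$-diagram is the sub-diagram $H_\ell$ (well-defined up to isomorphism). Unwinding the definitions: that each $H_\ell$ is a genuine Young diagram, that they are nested, and that each gap $H_{\ell-1}/H_\ell = \{b : \ell(b) = \ell-1\}$ is a horizontal strip, are together equivalent to ``$H$ is a tableau'', and $H_n = \emptyset$, $H_0 = \lambda$; translating through the diagram--module dictionary these are precisely the structural constraints of Propositions~\ref{prop:splitfreetorsion}, \ref{prop:onlygrows} and \ref{prop:rankchanges} for $T_n, \dotsc, T_0$, so this sequence does occur as the partial cokernels of $A$ (and, reading ``can occur'' in the strong sense, the ``if and only if'' clause of Proposition~\ref{prop:rankchanges} lets one realize the prescribed jump/no-jump pattern by a suitable choice of columns). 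Finally the two constructions are visibly inverse: the tableau built from a sequence satisfies $H_\ell = \operatorname{diag} T_\ell$, so re-reading off the modules returns the original sequence; and the chain of diagrams read off from a tableau determines $\ell(b) = \max\{\ell : b \in H_\ell\}$, which reconstructs $H$ because $b \in H_\ell$ if and only if $\ell(b) \ge \ell$.

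I do not expect a serious obstacle here: the only point needing genuine care is the exact matching between ``the skew shape $\mu^{(\ell-1)}/\mu^{(\ell)}$ is a horizontal strip'' and ``labels strictly decrease down columns'', carried out above, and — if one insists on the strong reading of ``can occur as the partial cokernels of $A$'' (attained by an actual matrix rather than merely satisfying the structural constraints) — invoking the characterization in Proposition~\ref{prop:rankchanges}, or the positivity of the probabilities furnished by Proposition~\ref{prop:universality}, to exhibit such a matrix. There is also the harmless indexing convention ($\{0,\dotsc,n-1\}$ versus $[n]$, equivalently whether one reads $T_\ell \leftrightarrow H_\ell$ or $T_{\ell-1} \leftrightarrow H_\ell$) that should be fixed once at the outset.
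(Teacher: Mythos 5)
Your proposal is correct and follows essentially the same route as the paper: nesting of diagrams from Proposition~\ref{prop:onlygrows}, the at-most-one column growth from Proposition~\ref{prop:rankchanges}, labelling each box by the time it appears, and checking the row/column conditions, with the converse handled by reversing the dictionary. You simply carry out in more detail (horizontal-strip verification, indexing shift, the realizability reading of ``can occur'') what the paper's proof states briefly and dismisses as obvious.
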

\begin{proof}
Fix the sequence $T_n, \dotsc, T_0$ arising from a matrix $A$. By Proposition~\ref{prop:onlygrows} we see that the diagrams $\lambda_\ell$ associated to $T_\ell$ are nested, so define $H/\lambda$ to be the numbering of $\lambda$ given by the time the box appears in the nested sequence. Clearly the rows of $\lambda$ are weakly increasing. Proposition~\ref{prop:rankchanges} shows that the columns increase by at most one each step, so there cannot be two adjacent equal numbers in a vertical configuration. The converse is obvious.
\end{proof}

For $R = \Z/N\Z$, we need to construct a tableau for each prime $p$ dividing $N$. Then we have the following.
\begin{corollary} \label{cor:compositetableau}
    Let $A$ be an $n \times n$ matrix over $\Z/N\Z$ and let $\lambda_{(p)}$ denote the diagram for the $p$-part of $\coker A$ for $p \mid N$. Then there is a one-to-one correspondence between sequences $(T_n)_{(p)}, \ldots, (T_0)_{(p)}$ which can occur as the $p$-part of the partial cokernels of $A$ and tableaux $H / \lambda_{(p)}$ from $[n]$ as above.
\end{corollary}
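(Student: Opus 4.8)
The plan is to reduce everything to the already-established $p$-adic case, Corollary~\ref{cor:padictableau}. First I would invoke Proposition~\ref{prop:takepparts} together with the Chinese remainder theorem: the partial cokernel $R^n/W_\ell$ decomposes as $\bigoplus_{p\mid N} R_{(p)}^n/(W_\ell)_{(p)}$, and the quotient maps $\phi_\ell$ respect this decomposition, so the sequence of $p$-parts $(T_n)_{(p)},\dotsc,(T_0)_{(p)}$ depends only on the $p$-component of the entries of $A$ and the various primes dividing $N$ may be analyzed independently. It therefore suffices to prove the stated bijection one prime at a time.

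Fix $p\mid N$ with $p^k\,\|\,N$. As noted in the proof of Proposition~\ref{prop:onlygrows}, $\phi_\ell$ induces an injection $(T_\ell)_{(p)}\hookrightarrow(T_{\ell-1})_{(p)}$ on the sequences that actually occur, so the associated $p$-diagrams are nested. Exactly as in the proof of Corollary~\ref{cor:padictableau}, I would define the numbering $H/\lambda_{(p)}$ of $\lambda_{(p)}=\lambda_{(p),0}$ by recording, in each box, the index $\ell$ at which that box first enters the nested sequence; nestedness makes rows weakly decreasing, and Proposition~\ref{prop:rankchanges}, applied with the functions $r_{j,(p)}$, shows that each column length grows by at most one per step, so columns are strictly decreasing and $H/\lambda_{(p)}$ is a semi-standard tableau with $H_\ell$ the diagram of $(T_\ell)_{(p)}$. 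Since $(T_0)_{(p)}$ is the $p$-part of $\coker A$, a quotient of $(\Z/p^k\Z)^n$, its diagram has at most $k$ columns, which is precisely the constraint on a $p$-diagram over $\Z/N\Z$. Conversely, any such tableau $H/\lambda_{(p)}$ has nested sub-diagrams $H_n\subseteq\dotsb\subseteq H_0$, each with at most $k$ columns, hence a nested chain of finite $\Z/p^k\Z$-modules whose column increments obey the bound of Proposition~\ref{prop:rankchanges}, so the chain is realizable as the $p$-parts of the partial cokernels of some matrix over $\Z/N\Z$.

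I do not expect a genuine obstacle here: the statement is essentially bookkeeping once Corollary~\ref{cor:padictableau} and the structural propositions are in hand. The only point that needs a moment's care is the bound of $k=v_p(N)$ columns on the $p$-diagram — it must be checked to be automatic on the module side (which it is, as the relevant modules are quotients of $(\Z/p^k\Z)^n$) and to coincide with the hypothesis built into the tableau side by the definition of a diagram over $\Z/N\Z$ from the Young-diagram subsection.
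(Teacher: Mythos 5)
Your proposal is correct and takes essentially the same route as the paper: the paper gives no separate proof of Corollary~\ref{cor:compositetableau}, treating it as immediate from the Chinese remainder decomposition of Proposition~\ref{prop:takepparts} and a prime-by-prime repetition of the proof of Corollary~\ref{cor:padictableau} via Propositions~\ref{prop:onlygrows} and~\ref{prop:rankchanges}, which is exactly what you spell out. Your additional check that the $p$-diagram has at most $v_p(N)$ columns simply makes explicit the constraint already built into the paper's definition of $p$-diagrams for $\Z/N\Z$-modules.
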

In particular, we need to control a separate tableau for each prime factor of $N$.

\subsection{Enlarged submodules}

From Corollary~\ref{cor:sumoversequences} and Corollary~\ref{cor:padictableau}, for $R = \Z_p$ we can expand
\[
  \mathbb{P}(\coker A \cong G) = \sum_{H / \lambda \text{ from } [n]} \mathbb{P}(T_\ell \cong H_\ell \text{ for all } \ell) + O(e^{-c \alpha n}).
\]
Each term in the sum can be expanded into a product by conditional expectation; namely,
\[
  \mathbb{P}(T_\ell \cong H_\ell \text{ for all } \ell) = \prod_{\ell=0}^n \mathbb{P}(T_\ell \cong H_\ell \mid T_j \cong H_j \text{ for all } j > \ell).
\]
Each term in the product is not yet in a form that we can estimate. In particular, we would like to represent the set of $X_\ell$ such that $T_{\ell-1}$ has the desired isomorphism class as the set-theoretic difference of enlarged submodules.

Consider the two-parameter family of events
\[
  G_{a,b} := \{X_\ell \in p^a R^n + W_\ell[p^b]\}
\]
with $a \geq 1$ and $b \geq 0$. We abuse notation and write
\[
  G_{a,\infty} := \{X_\ell \in p^a R^n + W_\ell[\infty]\}
\]
as well. We have the obvious inclusions
\[
  G_{a+1,b} \subseteq G_{a,b} \subseteq G_{a,b+1}
\]
so we can define events
\[
  E_{j,0} := G_{j,0}
\]
and
\[
  E_{j,t} := G_{j-t,t} \setminus G_{j-t,t-1}.
\]
for $1 \leq j$ and $1 \leq t \leq j-1$. The events $E_{j,t}$ as $t$ varies can be used to test the $j$-rank of the $p$-primary part of $T$ as follows.
\begin{proposition} \label{prop:rgrowsmeansE}
The events $E_{j,0}, \dotsc, E_{j,j-1}$ are disjoint, and the equality
\[
  r_{j,(p)}(T_{\ell - 1}) = r_{j,(p)}(T_\ell) + 1
\]
holds if and only if $E_{j,0} \cup \dotsb \cup E_{j,j-1}$ holds.
\end{proposition}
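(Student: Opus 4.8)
The plan is to push the entire statement into the single module $M := R^n / W_\ell$ and then run a short piece of bookkeeping. We may work on the full-probability event of Proposition~\ref{prop:splitfreetorsion}, so that $M \cong R^\ell \oplus T_\ell$ and $R^n/W_{\ell-1}\cong R^{\ell-1}\oplus T_{\ell-1}$ are defined (in particular $r_{j,(p)}(T_{\ell-1})$ makes sense); when $R=\Z/N\Z$, Proposition~\ref{prop:takepparts} lets me replace $M$ by its $p$-primary part, since both $r_{j,(p)}$ and every event $G_{a,b}$ (hence every $E_{j,t}$) depend only on it. Writing $\bar X_\ell$ for the image of $X_\ell$ in $M$, the observation that drives everything is the elementary equivalence, valid for $a\ge 1$ and $b\ge 0$,
\[
  X_\ell \in p^a R^n + W_\ell[p^b] \quad\Longleftrightarrow\quad p^b \bar X_\ell \in p^{a+b} M,
\]
obtained by writing $X_\ell = p^a u + v$ with $p^b v\in W_\ell$ and applying $p^b$, and conversely noting that $p^b X_\ell\in p^{a+b}R^n + W_\ell$ forces $X_\ell - p^a u\in W_\ell[p^b]$. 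In particular $E_{j,0}=G_{j,0}$ becomes the event $\{\bar X_\ell\in p^j M\}$, and for $1\le t\le j-1$,
\[
  E_{j,t} = \{\,p^t\bar X_\ell\in p^j M\,\}\setminus\{\,p^{t-1}\bar X_\ell\in p^{j-1}M\,\}.
\]

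Disjointness is then immediate: for $0\le s<t\le j-1$, if $E_{j,s}$ occurs then $p^s\bar X_\ell\in p^j M$, so $p^{t-1}\bar X_\ell = p^{t-1-s}(p^s\bar X_\ell)\in p^j M\subseteq p^{j-1}M$, contradicting the second clause defining $E_{j,t}$; hence $E_{j,s}\cap E_{j,t}=\emptyset$. For the characterization I would invoke Proposition~\ref{prop:rankchanges}, which says that $r_{j,(p)}(T_{\ell-1})=r_{j,(p)}(T_\ell)+1$ holds if and only if $\langle\bar X_\ell\rangle\cap p^{j-1}M\subseteq p^j M$. Since the set $\{f\ge 0 : p^f\bar X_\ell\in p^{j-1}M\}$ is upward closed (multiplying by $p$ keeps you in $p^{j-1}M$) and contains $j-1$, it is an interval $[f_0,\infty)$ for a well-defined $f_0\le j-1$, and $p^{f_0}\bar X_\ell$ generates $\langle\bar X_\ell\rangle\cap p^{j-1}M$; so the rank criterion reduces to the single condition $p^{f_0}\bar X_\ell\in p^j M$.

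I would finish by comparing this criterion with $\bigcup_{t=0}^{j-1}E_{j,t}$ in two cases. If $f_0=0$, i.e.\ $\bar X_\ell\in p^{j-1}M$, the criterion reads $\bar X_\ell\in p^j M$, which is exactly $E_{j,0}$; and no $E_{j,t}$ with $t\ge 1$ can hold, since its second clause fails ($p^{t-1}\bar X_\ell\in p^{j-1}M$ already). If $f_0\ge 1$, then $\bar X_\ell\notin p^{j-1}M$ rules out $E_{j,0}$, while minimality of $f_0$ gives $p^{f_0-1}\bar X_\ell\notin p^{j-1}M$ and trivially $p^{f_0}\bar X_\ell\in p^{j-1}M$; hence the criterion $p^{f_0}\bar X_\ell\in p^j M$ is precisely the assertion that $E_{j,f_0}$ occurs (with $1\le f_0\le j-1$), and conversely, if $E_{j,t}$ occurs for some $t\in\{1,\dots,j-1\}$ then $p^{t-1}\bar X_\ell\notin p^{j-1}M$ together with $p^t\bar X_\ell\in p^j M\subseteq p^{j-1}M$ pins down $f_0=t$, so the criterion holds. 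Combining the two cases with Proposition~\ref{prop:rankchanges} yields the desired equivalence and completes the proof.

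The content lies entirely in the case analysis of the previous paragraph: recasting the two-parameter family $G_{a,b}$ as the single statistic ``how deep $p^t\bar X_\ell$ sits in the filtration $p^\bullet M$'', and keeping straight the off-by-one between the $p^{j-1}M$ in the rank criterion and the $p^j M$ in the $E_{j,t}$. I do not expect any genuine obstacle beyond checking that $f_0$ is well-defined with $f_0\le j-1$, that $\langle\bar X_\ell\rangle\cap p^{j-1}M$ is indeed generated by $p^{f_0}\bar X_\ell$, and the routine reduction to the $p$-part when $R=\Z/N\Z$.
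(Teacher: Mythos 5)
Your proposal is correct and follows essentially the same route as the paper: both reduce the rank condition to the criterion of Proposition~\ref{prop:rankchanges}, rewrite each event $G_{a,b}$ as the divisibility statement $p^b X_\ell \in p^{a+b}R^n + W_\ell$ (you phrase this in the quotient $M = R^n/W_\ell$), and run a minimal-exponent case analysis, with the same nesting argument giving disjointness. The only difference is cosmetic bookkeeping --- you measure the depth $f_0$ of $\bar X_\ell$ against $p^{j-1}M$ while the paper uses the least $t$ with $p^tX_\ell \in p^jR^n + W_\ell$ --- so no further comment is needed.
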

\begin{proof}
    We have $E_{j,t} \subseteq G_{j-t,t} \subseteq G_{g-t-s,t+s-1}$ for all $s \geq 1$, but $E_{j,t+s} = G_{g-t-s,t+s} \setminus G_{g-t-s,t+s-1}$, so the family is pairwise disjoint.

First we observe that the condition $r_{j,(p)}(T_{\ell - 1}) = r_{j,(p)}(T_\ell) + 1$ is equivalent to $\langle X_\ell \rangle \cap p^{j-1} (R^\ell \oplus T_\ell) \bmod p^j = \{0\}$ by Proposition~\ref{prop:rankchanges}, so it suffices to show that $E_{j,0} \cup \dotsb \cup E_{j,j-1}$ is equivalent to this event.

Suppose that $\langle X_\ell \rangle \cap p^{j-1} (R^\ell \oplus T_\ell) \bmod p^j = \{0\}$. Let $t \geq 0$ be the minimum exponent such that $p^t X_\ell \in p^j R^n + W_\ell$, or equivalently, $X_\ell \in p^{j-t} + W_\ell[p^t]$. If $t = 0$ then $E_{j,0}$ holds; otherwise, $t > 0$ and thus $p^{t-1} X_\ell \notin p^j R^n + W_\ell$. But this implies that $p^{t-1} X_\ell \notin p^{j-1} R^n + W_\ell$, or equivalently $X_\ell \notin p^{j-t} R^n + W_\ell[p^{t-1}]$. Therefore $E_{j,t} = G_{j-t,t} \setminus G_{j-t,t-1}$ holds. It remains to show that $t < j$, but if $t = j$ we would have shown that $p^{t-1} X_\ell \notin p^{j-1} R^n + W_\ell$ which is absurd.

Conversely, suppose $E_{j,t}$ holds. If $t = 0$ then $X_\ell \in p^j R^n + W_\ell$ so $\langle X_\ell \rangle = \{0\} \bmod p^j R^n + W_\ell$. If $t > 0$ then $p^{t-1} X_\ell \notin p^{j-1} R^n + W_\ell$ which implies that $\langle X_\ell \rangle \cap p^{j-1} R^n + W_\ell \subseteq \langle p^t X_\ell \rangle$. However, $p^t X_\ell \in p^j + W_\ell$ so $\langle X_\ell \rangle \cap p^{j-1} R^n + W_\ell = \{0\} \bmod p^j$.
\end{proof}
We have therefore reduced the problem of computing the isomorphism class of $T_{\ell - 1}$ conditioned on $W_\ell$ to testing the membership of
\[
  X_\ell \in p^a R^n + W_\ell[p^b]
\]
for various $a$, $b$, and $p$.

Precisely, we have the following. Fix $\ell$ and let $S \subset \Z^+$ denote those indices where $r_k(H_{\ell-1}) = r_k(H_\ell) + 1$. By Proposition~\ref{prop:noxintorsion}, $S$ is finite with probability $1 - O(e^{-c \alpha n})$. Let $\eta : [\abs{S}] \to \Z^+ \cup \{0\}$ be the function which enumerates the elements of $S$. For example, if $S = \{2, 5, 7\}$ then $\eta(1) = 2$, $\eta(2) = 5$, and $\eta(3) = 7$.

\begin{proposition} \label{prop:congintoenlarged}
    Fix $\ell \geq 1$ and tableau $H / \lambda$. Let $S \subset \Z^+$ and $\eta$ be as above. Then
    \[
      \{T_\ell \cong H_\ell \} = (\neg G_{\abs{S}+1,\infty}) \cap \bigcap_{k =1}^{\abs{S}} E_{\eta(k),\eta(k)-k}.
    \]
\end{proposition}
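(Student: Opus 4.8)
Throughout, we work on the event that the columns $X_{\ell+1},\dots,X_n$ are such that $R^n/W_\ell\cong R^\ell\oplus T_\ell$ with $T_\ell$ of diagram $H_\ell$; then $W_{\ell-1}=W_\ell+\langle X_\ell\rangle$, $R^n/W_{\ell-1}\cong R^{\ell-1}\oplus T_{\ell-1}$ by Proposition~\ref{prop:splitfreetorsion}, and the left-hand side of the proposition is the event that $T_{\ell-1}$ has diagram $H_{\ell-1}$. (For $R=\Z/N\Z$ one works one prime $p\mid N$ at a time, lifting $\Z/p^k\Z$ to $\Z_p$ as in Proposition~\ref{prop:splitfreetorsion}, so I may take $R=\Z_p$.) The plan is to encode this event in a single integer-valued function of $X_\ell$ and then read the events $E_{j,t}$ and $G_{a,\infty}$ off it. The first step is formal: by definition of $S$ we have $r_k(H_{\ell-1})=r_k(H_\ell)+1$ for $k\in S$ and $r_k(H_{\ell-1})=r_k(H_\ell)$ otherwise, a finite $p$-group is determined by its column lengths $r_1,r_2,\dots$, and each change $r_k(T_{\ell-1})-r_k(T_\ell)$ lies in $\{0,1\}$ by Propositions~\ref{prop:onlygrows} and \ref{prop:rankchanges}; hence $\{T_{\ell-1}\cong H_{\ell-1}\}$ is exactly the event that the set of \emph{growing} columns, those $k$ with $r_k(T_{\ell-1})=r_k(T_\ell)+1$, equals $S$.

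Next I would introduce
\[
  t_j := \min\bigl\{\,t\ge 0 : p^t X_\ell\in p^jR^n+W_\ell\,\bigr\},\qquad j\ge0,
\]
which is well defined with $0\le t_j\le j$, has $t_0=0$, and is nondecreasing with $t_j-t_{j-1}\in\{0,1\}$ (from $p^{t_{j-1}+1}X_\ell\in p^jR^n+W_\ell\subseteq p^{j-1}R^n+W_\ell$). Unwinding the definitions of $W_\ell[p^b]$ and $W_\ell[\infty]$ yields, for $a\ge1$ and $b\ge0$, the identity $G_{a,b}=\{t_{a+b}\le b\}$, whence $G_{a,\infty}=\{\exists\,j\ge a:t_j\le j-a\}$ and $\neg G_{\abs{S}+1,\infty}=\{\,t_j\ge j-\abs{S}\ \text{for all}\ j\ge\abs{S}+1\,\}$. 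Substituting into $E_{j,t}=G_{j-t,t}\setminus G_{j-t,t-1}$ (and $E_{j,0}=G_{j,0}$) and using monotonicity gives the clean identity $E_{j,t}=\{\,t_{j-1}=t_j=t\,\}$ for $0\le t\le j-1$, so $E_{j,0}\cup\dots\cup E_{j,j-1}=\{t_j=t_{j-1}\}$ (note $t_j=t_{j-1}$ forces $t_j\le j-1$, for $t_j=j$ would force $t_{j-1}=j-1\ne t_j$). By Proposition~\ref{prop:rgrowsmeansE}, column $j$ is growing iff $t_j=t_{j-1}$, so it remains to show that $N:=\{\,j\ge1:t_j=t_{j-1}\,\}$ equals $S$ if and only if $X_\ell$ lies in the right-hand side.

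The crux is a combinatorial lemma about a nondecreasing integer sequence $t$ with $t_0=0$ and increments in $\{0,1\}$: writing $S=\{\eta(1)<\dots<\eta(m)\}$, $m=\abs{S}$, and $\eta(0):=0$, $\eta(m+1):=\infty$, one has $N=S$ if and only if $t_{\eta(k)-1}=t_{\eta(k)}=\eta(k)-k$ for all $1\le k\le m$ and $t_j\ge j-m$ for all $j\ge m+1$; since $E_{\eta(k),\eta(k)-k}=\{t_{\eta(k)-1}=t_{\eta(k)}=\eta(k)-k\}$ and $\neg G_{\abs{S}+1,\infty}$ has the form just derived, this lemma \emph{is} the proposition. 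One direction is immediate from $t_j=j-\#\{\text{non-increments}\le j\}$: if $N=S$ then $t_j=j-k$ on $[\eta(k),\eta(k+1))$, giving both conditions. For the converse I would induct on $k$ to prove $t_j=j-k$ on $[\eta(k),\eta(k+1))$: the base case $k=0$ uses only $t_0=0$ and $t_{\eta(1)-1}=\eta(1)-1$; for $0<k<m$ the number of steps in the block, $\eta(k+1)-1-\eta(k)$, equals the prescribed total increase $(\eta(k+1)-(k+1))-(\eta(k)-k)$, forcing every step there to be an increment; for the last block one combines $t_{\eta(m)}=\eta(m)-m$ (first condition) with $t_j\le t_{\eta(m)}+(j-\eta(m))=j-m$ and $t_j\ge j-m$ (second condition, valid for $j>\eta(m)\ge m$). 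Then $N=S$. Finally, when $N=S$ is finite we have $t_j\to\infty$, so $X_\ell\notin W_\ell[\infty]$ and $T_{\ell-1}$ is genuinely finite (as in Proposition~\ref{prop:splitfreetorsion}), so the equality of all column lengths really does give $T_{\ell-1}\cong H_{\ell-1}$.

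I expect the combinatorial lemma to be the only real difficulty, and within it the bookkeeping of the off-by-ones: the shift $\eta(k)\mapsto\eta(k)-k$ in the indices of the $E$'s, the $\abs{S}+1$ rather than $\abs{S}$ in $\neg G_{\abs{S}+1,\infty}$, the half-open blocks $[\eta(k),\eta(k+1))$, and the degenerate cases $k=0$, $k=m$, and $S=\emptyset$ (where the right-hand side collapses to $\neg G_{1,\infty}=\{t_j=j\ \text{for all}\ j\}$, i.e.\ $N=\emptyset$). Everything else — the identities for $G_{a,b}$ and $E_{j,t}$ in terms of $t_j$, and the passage from column growth to $t_j=t_{j-1}$ — is a routine unwinding of definitions together with Propositions~\ref{prop:onlygrows}, \ref{prop:rankchanges} and \ref{prop:rgrowsmeansE}.
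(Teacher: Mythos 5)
Your proposal is correct, and it proves the same identity from the same preparatory facts (Propositions~\ref{prop:onlygrows}, \ref{prop:rankchanges}, \ref{prop:rgrowsmeansE}) but by a different mechanism than the paper. The paper argues by direct elimination among the events: using the pairwise disjointness of the $E_{j,t}$ and inclusion chains such as $E_{\eta(1),t}\subseteq G_{\eta(1)-t,t}\subseteq G_{1,t}$, it rules out every $E_{\eta(k),t}$ except $t=\eta(k)-k$ one index at a time, dismisses growth beyond $\abs{S}$ via $G_{\abs{S}+1,\infty}$, and leaves the inductive step as ``continue the argument in this way.'' You instead encode the whole configuration in the single monotone sequence $t_j=\min\{t\ge 0: p^tX_\ell\in p^jR^n+W_\ell\}$, check the exact translations $G_{a,b}=\{t_{a+b}\le b\}$ and $E_{j,t}=\{t_{j-1}=t_j=t\}$, and reduce the proposition to a self-contained lemma about nondecreasing sequences with $\{0,1\}$ increments, proved by a block induction over the intervals $[\eta(k),\eta(k+1))$; I verified the translations and the lemma, and they are right. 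What your route buys is precision: the paper's sketchy iterative step becomes a genuine induction, and your identity $G_{1,t}=\{t_{t+1}\le t\}$ even exposes a harmless off-by-one in the paper's displayed inclusion $G_{1,t}\subseteq\bigcup_{k\le t}E_{k,k-1}$, which should be $\bigcup_{k\le t+1}E_{k,k-1}$ (the application only needs $t+1\le\eta(1)-1$, so the paper's argument survives). You also resolved the paper's index-shift ambiguity (the displayed event $\{T_\ell\cong H_\ell\}$ versus the transition $T_\ell\to T_{\ell-1}$ governed by $X_\ell$) in the only way consistent with how the proposition is used in the proofs of the theorems, and you supplied the finiteness caveat ($t_j\to\infty$ on the right-hand side forces $X_\ell\notin W_\ell[\infty]$, so $T_{\ell-1}$ is indeed finite and determined by its column lengths) that the paper leaves implicit.
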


\begin{proof}
    First suppose that $S$ is empty. Then $T_\ell$ is isomorphic to $H_\ell \cong H_{\ell+1}$ if and only if $X_\ell \notin pR^n + W_\ell[\infty]$, i.e.~$G_{1,\infty}$ does not hold.

Now for non-empty $S$, we must have $E_{\eta(1),t}$ for some $0 \leq t < \eta(1)$ but no $E_{k,s}$ for $k < \eta(1)$ and $0 \leq s < k$. Then the only possibility is $t = \eta(1) - 1$. In fact, for any other $t < \eta(1) - 1$
\[
  E_{\eta(1),t} \subseteq G_{\eta(1)-t,t} \subseteq G_{1,t} \subseteq \bigcup_{k \leq t} E_{k,k-1}.
\]
Now we consider $\eta(2)$. We cannot have $E_{\eta(2),\eta(2)-1}$ because it is disjoint from $E_{\eta(1),\eta(1)-1}$; similarly, we cannot have $E_{\eta(2),t}$ for $t \leq \eta(2) - 3$ because
\[
  E_{\eta(2),t} \subseteq G_{\eta(2)-t,t} \subseteq \bigcup_{0 \leq k \leq \eta(2)-3} E_{k+2,k}
\]
and none of the sets in the union can hold. Thus the only possibility is $E_{\eta(2), \eta(2)-2}$.

If we continue the argument in this way, then it is easy to see that
\[
  \{T_{\ell} \cong H_\ell \} \subseteq \bigcap_{k=1}^{\abs{S}} E_{\eta(k),\eta(k)-k}
\]
and in fact that the right hand side denotes the event that $T_\ell$ matches $H_\ell$ up to the largest element of $S$.

To complete the argument, we must require that no $E_{k+t,t}$ holds for $k > \abs{S}$. But the union of these sets is precisely $G_{\abs{S}+1,\infty}$.
\end{proof}

\subsection{Entropy bounds} \label{sec:entropybounds}

For $\ell > \delta n$ we observe that any non-trivial column would require that $X_\ell \in (p) + W[\infty]$ for some prime $p$. However, by Odlyzko's lemma \cite{Odl88}, this occurs with probability $O(e^{-c \alpha n})$ so these columns can be ignored. We recall the proof here for a finite field, which suffices for $R = \Z_p$ and $\Z/N\Z$.
\begin{lemma}[Odlyzko] \label{lem:odlyzko}
For any fixed subspace $V$ of $\mathbb{F}_q^n$ and random vector $X \in \mathbb{F}_q^n$ with min-entropy $\alpha$, we have the bound
\[
  \mathbb{P}(X \in V) \leq (1-\alpha)^{\codim V}.
\]
\end{lemma}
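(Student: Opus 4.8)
The plan is to reduce the statement to a one-dimensional estimate by choosing good coordinates and then inducting on the codimension. First I would set $m = \codim V$; the case $m = 0$ is trivial since the bound reads $\mathbb{P}(X \in V) \leq 1$. For $m \geq 1$, the key observation is that $V$ is cut out by $m$ linearly independent linear functionals, so there is a linear functional $\lambda \colon \mathbb{F}_q^n \to \mathbb{F}_q$ that is nonzero on $V^\perp$ and a coordinate index $i$ such that $\lambda$ has a nonzero coefficient on the $i$th coordinate. Equivalently, after relabeling, I can write $V \subseteq \{x : \lambda(x) = 0\} =: V'$ with $\codim V' = 1$, and I want to condition on all coordinates of $X$ except the $i$th.

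The main step is the base case $m = 1$. Write $\lambda(x) = \sum_{j} c_j x_j$ with, say, $c_i \neq 0$. Conditioning on the values of $x_j$ for $j \neq i$ (which is legitimate since the coordinates of $X$ are independent), the event $X \in V'$ becomes the event that $x_i$ equals a fixed element $t$ of $\mathbb{F}_q$ determined by the other coordinates, namely $t = -c_i^{-1}\sum_{j \neq i} c_j x_j$. By the min-entropy hypothesis applied to the $i$th coordinate, $\mathbb{P}(x_i = t) \leq 1 - \alpha$ uniformly in $t$; averaging over the conditioning gives $\mathbb{P}(X \in V') \leq 1 - \alpha$, hence $\mathbb{P}(X \in V) \leq 1 - \alpha$ for $m = 1$.

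For the inductive step, given $\codim V = m \geq 2$, pick a hyperplane $V'$ with $V \subseteq V' \subseteq \mathbb{F}_q^n$ and $\codim V' = 1$, choosing the defining functional of $V'$ to involve a coordinate $i$ (possible since $V^\perp$ is nonzero). Condition on all coordinates other than $x_i$; on this conditioning, the slice $V \cap \{x_j = \text{fixed for } j \neq i\}$ is, as a subset of the line parametrized by $x_i$, either empty or a single point, so its probability is at most $\sup_t \mathbb{P}(x_i = t) \leq 1 - \alpha$. But this does not yet give the full power $(1-\alpha)^m$; to get that, instead I would iterate the $m=1$ argument: peel off one functional at a time, at each stage conditioning on a fresh coordinate not yet used. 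Concretely, choose linearly independent functionals $\lambda_1, \dots, \lambda_m$ cutting out $V$ and, by Gaussian elimination, arrange that $\lambda_k$ has a nonzero coefficient on coordinate $i_k$ with $i_1, \dots, i_m$ distinct. Then $\mathbb{P}(X \in V) = \mathbb{E}\big[\prod_{k=1}^m \mathbf{1}[\lambda_k(X) = 0]\big]$, and exposing $x_{i_1}, \dots, x_{i_m}$ last and in order, each indicator contributes a conditional probability at most $1 - \alpha$ by the base-case argument (the conditioning fixes the value that $x_{i_k}$ must take). Multiplying the conditional bounds yields $\mathbb{P}(X \in V) \leq (1-\alpha)^m$.

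The only real subtlety — and the step I would be most careful about — is the reduction by Gaussian elimination producing distinct pivot coordinates $i_1, \dots, i_m$; this requires that the $m \times n$ coefficient matrix of the $\lambda_k$ has rank $m$, which holds precisely because the $\lambda_k$ are linearly independent, so one can select $m$ columns forming an invertible $m \times m$ minor and use those as pivots. Everything else is a routine application of independence of coordinates and the definition of min-entropy.
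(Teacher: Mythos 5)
Your proof is correct and is essentially the paper's own argument: once you pass to the invertible $m\times m$ pivot minor and multiply by its inverse (so that $\lambda_k$ involves only $x_{i_k}$ among the pivot coordinates), conditioning on the remaining $n-m$ coordinates forces each pivot coordinate to a single prescribed value, which is exactly the paper's formulation that $V$ is a graph over $n-\codim V$ coordinates. Independence of the entries and the min-entropy bound then give $(1-\alpha)^{\codim V}$ in both versions.
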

\begin{proof}[Proof of Lemma~\ref{lem:odlyzko}]
Let $k$ denote the codimension of $V$.  We can find $n - k$ coordinates $\tau \subseteq [n]$ such that $V$ is a graph over $\tau$.  If we condition on the coordinates of $X$ in $\tau$, then there is a unique choice for the remaining coordinates $[n] \setminus \tau$ for $X \in V$.  Since $\mu$ has min-entropy $\alpha$, the probability that each entry of $X$ assumes the required value is bounded by $1 - \alpha$, and the result follows from the independence of the entries.
\end{proof}

For $G$ sufficiently small, i.e. $\log_p \abs{G} = O(1)$ for all $p$, Theorem~\ref{thm:padiccokernel} and Theorem~\ref{thm:compositecokernel} will follow easily from Proposition~\ref{prop:universality} and the decomposition into Young tableaux. However, for larger $G$ there are a super-exponential number of tableaux $H / \lambda$ and we require a finer analysis.

Define the \textbf{weight} $w(\lambda)$ of a diagram $\lambda$ to be the sum
\[
  w(\lambda) = \sum_{(i,j) \in \lambda} \frac{j(j+1)}{2}
\]
Note that $w(\lambda)$ is the sum of the entries of the minimum semi-standard tableau on $\lambda$. Then there are two possibilities.
\begin{proposition} \label{prop:largeweightdiagram}
Let $\eps > 0$ be fixed. Then if $G$ is an $\Z_p$-module such that $w(\lambda) > \eps n$, then Theorem~\ref{thm:padiccokernel} holds.
\end{proposition}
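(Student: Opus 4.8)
The plan is to show that when $w(\lambda)$ is large, the probability $\mathbb{P}(\coker A \cong G)$ is already exponentially small, so Theorem~\ref{thm:padiccokernel} holds vacuously (both the left-hand side and the main term $\frac{1}{\abs{\Aut G}}\prod_k(1-p^{-k})$ are $O(e^{-c\alpha n})$, the latter because $\abs{\Aut G}$ is at least $\abs{G}$ up to a constant, and $w(\lambda) > \eps n$ forces $\abs{G} = p^{\abs\lambda}$ to be exponentially large in $n$). So the real content is the upper bound $\mathbb{P}(\coker A \cong G) = O(e^{-c\alpha n})$. First I would use Corollary~\ref{cor:sumoversequences} and Corollary~\ref{cor:padictableau} to write
\[
  \mathbb{P}(\coker A \cong G) = \sum_{H/\lambda \text{ from } [n]} \mathbb{P}(T_\ell \cong H_\ell \text{ for all } \ell) + O(e^{-c\alpha n}),
\]
and then expand each term as a telescoping product of conditional probabilities as in the "Enlarged submodules" subsection. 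The point of the weight function is that $w(\lambda) = \sum_{(i,j)\in\lambda}\frac{j(j+1)}{2}$ counts exactly the total number of "rank-increase events" that must occur, weighted by how deep they sit: each box in column $j$ of the tableau corresponds to some step $\ell$ where $r_{j,(p)}(T_{\ell-1}) = r_{j,(p)}(T_\ell)+1$, and by Proposition~\ref{prop:rgrowsmeansE} such a step forces one of the events $E_{j,0},\dotsc,E_{j,j-1}$, i.e.\ forces $X_\ell \in p^a R^n + W_\ell[p^b]$ with $a + \binom{b+1}{2} \geq \binom{j+1}{2}$ in an appropriate bookkeeping.

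The key estimate is then: at each such step, conditioned on everything exposed so far, $X_\ell$ is constrained to lie in an enlarged submodule whose cokernel has order at least $p^{j}$ (roughly; more precisely the $E_{j,t}$ event at that step costs a factor $p^{-j}$ up to the $O(e^{-c\alpha n})$ slack), by Proposition~\ref{prop:universality}. So the product of conditional probabilities along the tableau is at most $\prod_{(i,j)\in\lambda} (p^{-j} + O(e^{-c\alpha n}))$, which — as long as we are in the regime $\ell \le \eta n$ where Proposition~\ref{prop:universality} applies, and the error terms are genuinely negligible against $p^{-j} \le p^{-1}$ — is bounded by something like $p^{-w'(\lambda)}$ for a linear-in-$w(\lambda)$ quantity $w'(\lambda)$, hence by $p^{-c'\eps n} = O(e^{-c\alpha n})$. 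Summing over tableaux: the number of tableaux $H/\lambda$ from $[n]$ of a fixed shape $\lambda$ with $\abs\lambda = m$ boxes is at most $n^m$, and once $w(\lambda) > \eps n$ we have $\abs\lambda \gtrsim \sqrt{\eps n}$ but also $w(\lambda)$ itself is linear, so one needs $p^{-c'\eps n}$ to dominate the $n^{\abs\lambda}$ counting factor; this is fine because each box in a column $j \ge 2$ contributes a gain $p^{-j}$ that beats the per-box cost $n$, and the number of boxes in column $1$ is at most $n$ (a column has strictly increasing labels from $[n]$), so the column-$1$ contribution is separately controllable. I would organize this as a sum over $\lambda$ of (number of tableaux of shape $\lambda$) times (per-tableau bound), and check the geometric series converges.

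The main obstacle is the bookkeeping that converts "$w(\lambda) > \eps n$" into a genuine linear-in-$n$ gain in the exponent \emph{after} accounting for the $n^{\abs\lambda}$ overcount of tableaux and the accumulation of $O(e^{-c\alpha n})$ error terms across up to $\eta n$ conditioning steps. The subtlety is that Proposition~\ref{prop:universality}'s error is additive per step, so in a product of $\Theta(n)$ factors one cannot naively multiply; one must instead peel off: either some step has its enlarged submodule $V$ not cofinite (and then that single factor is $O(e^{-c\alpha n})$ by the last sentence of Proposition~\ref{prop:universality}, killing the whole term), or all the $V$'s are cofinite and one argues that the product of the $\abs{V^\perp}^{-1}$ factors, with the small additive errors absorbed, is at most $2^{\abs\lambda} \prod p^{-j}$, which still beats $n^{\abs\lambda}$ only because columns $j\ge 2$ give room to spare. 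I expect the cleanest route is to first dispose of the rows beyond the first column using the column-$1$ bound (each column-$1$ box costs $p^{-1}$ against a counting factor $n$, so column $1$ alone is \emph{not} enough and must be handled by the earlier Odlyzko-type bound of Section~\ref{sec:entropybounds}), and then note that $w(\lambda) > \eps n$ with bounded first column forces many deep boxes, where the $p^{-j}$ gains with $j$ large dominate everything. Making the "bounded first column" reduction precise — i.e.\ that we may assume $r_{1,(p)}(G) = O(1)$, else Odlyzko already gives $O(e^{-c\alpha n})$ — is the step I would write out carefully.
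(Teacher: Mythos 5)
There is a genuine gap in your key estimate. You charge each box in column $j$ of the tableau a factor $p^{-j}$, but that is not what Proposition~\ref{prop:universality} gives. If a box in column $j$ is created at step $\ell$, the relevant event is $E_{j,t}\subseteq G_{j-t,t}=\{X_\ell\in p^{j-t}R^n+W_\ell[p^{t}]\}$, and since $R^n/W_\ell$ has free rank $\ell$ (whp) the cokernel of this enlarged submodule is $(\Z/p^{j-t}\Z)^m$ with $m\ge\ell$; so the step costs about $p^{-(j-t)\ell}$, i.e.\ the gain scales with the exposure index $\ell$ (the \emph{label} of the box), not with its depth $j$. In particular a deep box created at a late step ($\ell=1$, $t=j-1$) costs only about $p^{-1}$, and conversely your claim that the gain $p^{-j}$ of a depth-$j\ge2$ box beats the per-box counting factor $n$ is false for bounded $j$. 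The failure is fatal exactly in the tall-first-column regime: for $G=(\Z/p\Z)^m$ with $m\approx\sqrt{2\eps n}$ one has $w(\lambda)>\eps n$ but every box has depth $1$, so your accounting yields only $p^{-m}=p^{-O(\sqrt{n})}$, which is not $O(e^{-c\alpha n})$. Your proposed patch for this regime --- ``we may assume $r_{1,(p)}(G)=O(1)$, else Odlyzko already gives $O(e^{-c\alpha n})$'' --- is false: by the Corollary in the introduction, corank $k$ mod $p$ occurs with probability about $p^{-k^2}$, a constant for bounded $k$; Odlyzko-type bounds give decay that is exponential in $n$ only once $r_1\gtrsim\sqrt{n}$. (The same regime undercuts your ``vacuous'' remark: $\abs{G}$ need only be $p^{O(\sqrt{\eps n})}$; the Cohen--Lenstra term is exponentially small because $\abs{\Aut G}$ is roughly at least $p^{\sum_j c_j^2}\ge p^{w(\lambda)}$, with $c_j$ the column lengths, not because $\abs{G}$ is exponentially large.)

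The paper's proof avoids this by coarsening in the other direction: it records only $\mu(\ell)$ with $\abs{T_\ell}=p^{\mu(\ell)}\abs{T_{\ell+1}}$, uses the inclusion $\{\abs{T_\ell}=p^{\mu(\ell)}\abs{T_{\ell+1}}\}\subseteq\{X_\ell\in p^{\mu(\ell)}\Z_p^n+W_\ell[\infty]\}$, and applies Proposition~\ref{prop:universality} to bound each step by $p^{-\ell\mu(\ell)}+O(e^{-c\alpha n})$ (Odlyzko's lemma for $\ell\ge\eta n$). Thus each box is charged $p^{-\ell}$ where $\ell$ is its label, the total gain is $p^{-\sum_\ell \ell\mu(\ell)}\le p^{-w(\lambda)}\le p^{-\eps n}$ precisely because $w(\lambda)$ is the minimal label sum of a semi-standard tableau on $\lambda$, and the sum over the coarse data $\mu$ is then absorbed into this gain. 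If you want to keep your column-by-column picture you must redo the bookkeeping so that the gain attached to a box depends on its label rather than its depth; in the tall-column case this amounts to bounding the probability that $A$ has rank at most $n-m$ over $\mathbb{F}_p$ by roughly $\binom{n}{m}e^{-c\alpha m(m-1)/2}$ via an Odlyzko union bound, which is exponentially small in $n$ only because $m^2\gtrsim\eps n$ --- that is, precisely the label-sum accounting the paper uses.
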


\begin{proof}
We have by Corollary~\ref{cor:sumoversequences} and Corollary~\ref{cor:padictableau} that
\begin{align*}
    \mathbb{P}(\coker A \cong G) &= \sum_{H / \lambda} \mathbb{P}(T_\ell \cong H_\ell \text{ for all } \ell) + O(e^{-c \alpha n}) \\
    &\leq \prod_{\mu} \mathbb{P}(\abs{T_\ell} = p^{\mu(\ell)} \abs{T_{\ell+1}} \text{ for all } \ell) + O(e^{-c \alpha n})
\end{align*}
where the sum is over functions $\mu : [n] \to \Z^+ \cup \{0\}$ with $\sum_t \mu(t) = \abs{w}$. We have the inclusion
\[
  \{\abs{T_\ell} = p^{\mu(\ell)} \abs{T_{\ell+1}}\} \subseteq \{X_\ell \in p^{\mu(\ell)} \Z_p^n + W_\ell[\infty]\} = \{X_\ell \in \phi_{\mathcal{F}(\mu(\ell))}(W_\ell)\}
\]
where $\mathcal{F}(\mu(\ell)) = \{(\mu(\ell),\infty)\}$. By Proposition~\ref{prop:universality}, we have for $\ell < \eta n$ (where $1 > \eta > 0$ is the absolute constant from the proposition) the bound
\[
  \mathbb{P}(X \in \phi_{\mathcal{F}(\mu(\ell))}(W_\ell) \mid R^n / \phi_{\mathcal{F}(\mu(\ell))}(W_\ell) \cong V^\perp) = \abs{V^\perp}^{-1} + O(e^{-c \alpha n})
\]
which, summing over classes of submodules of a given cardinality, gives the bound
\[
  \mathbb{P}(\abs{T_\ell} = p^{\mu(\ell)} \abs{T_{\ell+1}} \mid \abs{T_j} = p^{\mu(j)} \abs{T_{j+1}} \text{ for all } j > \ell) = p^{-\ell \mu(\ell)} + O(e^{-c \alpha n}).
\]
Therefore, collecting terms and using Lemma~\ref{lem:odlyzko} gives
\[
  \mathbb{P}(\coker A \cong G) \leq \sum_{\mu} \prod_{\substack{k < \eta n \\ \mu(k) > 0}} (p^{-k \mu(k)} + e^{-c \alpha n}) \prod_{\substack{k \geq \eta n \\ \mu(k) > 0}} e^{-c \alpha \eta n}.
\]
The number of possible functions $\mu$ is bounded by $\binom{n + \abs{\lambda} + 1}{n} \lesssim e^{c \eps n}$, so it suffices to show that each term in the sum is bounded by $e^{-c \eps n}$. For $\mu$ with $\mu(k) > 0$ for some $k > \eta n$ this is immediate; as is for $\mu$ with $p^{-k \mu(k)} > n^{-1} e^{-c \alpha n}$. Otherwise, we can bound the product by
\[
  \prod_{\mu(k) > 0} p^{-k \mu(k)}(1 + O(n^{-1})) \lesssim p^{- \sum_k k \mu(k)} \lesssim e^{-\eps n}
\]
and the result follows.
\end{proof}

\begin{corollary}
    There is an $\eps > 0$ such that if $G$ is a $\Z/N\Z$ modules such that $w(\lambda_{(p)}) > \eps n$ for some $p \mid N$, then Theorem~\ref{thm:compositecokernel} holds.
\end{corollary}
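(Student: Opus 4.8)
The plan is to reduce to Proposition~\ref{prop:largeweightdiagram} via the prime decomposition of Proposition~\ref{prop:takepparts}. Fix a prime $p \mid N$ with $w(\lambda_{(p)}) > \eps n$ and write $p^k \| N$. By Proposition~\ref{prop:takepparts}, $\coker A \cong G$ forces $(\coker A)_{(p)} \cong G_{(p)}$, and $(\coker A)_{(p)}$ is the cokernel of the reduction $\bar A \in M(n,\Z/p^k\Z)$ of $A$ modulo $p^k$, an iid matrix whose entries have min-entropy at least $\alpha$ since $\alpha(\xi)$ is by definition an infimum over the primes dividing $N$. Likewise $\coker B \cong G$ forces $(\coker B)_{(p)} \cong G_{(p)}$, where $(\coker B)_{(p)}$ is the cokernel of $\bar B \in M(n,\Z/p^k\Z)$, a uniform $\Z/p^k\Z$-matrix, which has min-entropy $1 - 1/p \geq 1/2$. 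So it suffices to bound $\mathbb{P}(\coker \bar A \cong G_{(p)})$ and $\mathbb{P}(\coker \bar B \cong G_{(p)})$.

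For this I would run the argument of Proposition~\ref{prop:largeweightdiagram} with $R = \Z/p^k\Z$ in place of $\Z_p$. Every ingredient is already in place: Corollary~\ref{cor:sumoversequences} and the universality Proposition~\ref{prop:universality} hold for $R = \Z/N\Z$, hence for $N = p^k$; the tableau bookkeeping is supplied by Corollary~\ref{cor:compositetableau} in place of Corollary~\ref{cor:padictableau}; and Lemma~\ref{lem:odlyzko} applies over any finite field. The sole structural difference is that the diagram of $G_{(p)}$ has at most $k$ columns, which is irrelevant to the estimate: one expands $\mathbb{P}(\coker \bar A \cong G_{(p)})$ as a sum over the size-increment functions $\mu$ recording $\abs{T_\ell}/\abs{T_{\ell+1}}$, bounds each conditional factor by $p^{-\ell\mu(\ell)} + O(e^{-c\alpha n})$ for $\ell < \eta n$ (with $\eta$ the absolute constant of Proposition~\ref{prop:universality}) and by $O(e^{-c\alpha\eta n})$ for $\ell \geq \eta n$ via Lemma~\ref{lem:odlyzko}, and uses that $\sum_\ell \ell\,\mu(\ell) \geq w(\lambda_{(p)}) > \eps n$ dominates the $e^{O(\eps n)}$-sized count of admissible $\mu$. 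This yields $\mathbb{P}(\coker \bar A \cong G_{(p)}) = O(e^{-c\alpha n})$, and the same computation with min-entropy $1/2$ gives $\mathbb{P}(\coker \bar B \cong G_{(p)}) = O(e^{-cn})$.

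Combining, $\mathbb{P}(\coker A \cong G) = O(e^{-c\alpha n})$ and $\mathbb{P}(\coker B \cong G) = O(e^{-c\alpha n})$, so the identity claimed in Theorem~\ref{thm:compositecokernel} holds with both sides absorbed into the error term. The only step I expect to require any care is the combinatorial inequality $\sum_\ell \ell\,\mu(\ell) \geq w(\lambda_{(p)})$ valid for every admissible increment function $\mu$; but this is immediate from the description of $w(\lambda)$ as the entry-sum of the minimal semi-standard tableau on $\lambda$, so I anticipate no essential obstacle --- this corollary is just the $\Z/N\Z$ shadow of Proposition~\ref{prop:largeweightdiagram}.
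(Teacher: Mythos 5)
Your proposal is correct and follows essentially the same route as the paper: bound $\mathbb{P}(\coker A \cong G)$ by $\mathbb{P}(\coker A_{(p)} \cong G_{(p)})$ and invoke the large-weight estimate of Proposition~\ref{prop:largeweightdiagram}. The paper's proof is a one-line version of this; you merely make explicit two points it leaves implicit, namely that the proposition's argument transfers from $\Z_p$ to $\Z/p^k\Z$ and that the uniform matrix $B$ obeys the same exponential bound, so both sides of the identity in Theorem~\ref{thm:compositecokernel} are absorbed into the error term.
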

\begin{proof}
    We have
    \[
      \mathbb{P}(\coker A \cong G) \leq \mathbb{P}(\coker A_{(p)} \cong G_{(p)}) = O(e^{-c \eps \alpha n})
    \]
    where the upper bound follows from Proposition~\ref{prop:largeweightdiagram}.
\end{proof}

If $w(\lambda) < \eps n$, then we can derive a bound on the number of possible diagrams of shape $\lambda$.
\begin{proposition} \label{prop:numberdiagrams}
If $w(\lambda) < \eps n$ then there are $O(e^{-c \sqrt{\eps} n})$ semi-standard tableaux with letters from $[n]$ on $\lambda$.
\end{proposition}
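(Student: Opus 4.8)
The plan is to bound the number of tableaux one column at a time, discarding the comparison conditions between different columns, and to exploit the fact that $w(\lambda)$ pins down the column heights sharply enough that the resulting per-column estimates sum to a convergent series. First I would unpack the hypothesis: writing $r_j$ for the height of the $j$th column of $\lambda$, every box of that column contributes $\tfrac{j(j+1)}{2}$ to $w(\lambda)$, so $r_j\,\tfrac{j(j+1)}{2}\le w(\lambda)<\eps n$, i.e.\ $r_j<\tfrac{2\eps n}{j(j+1)}$. In particular $\abs{\lambda}=\sum_j r_j<\eps n$ and $\lambda$ has fewer than $\sqrt{2\eps n}$ columns.

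Next I would count. A semi-standard tableau $H/\lambda$ with letters from $[n]$ restricts on the $j$th column to a strictly decreasing sequence of $r_j$ letters from $[n]$, that is, to an $r_j$-element subset of $[n]$, and $H$ is recovered from the tuple of these subsets; hence the number of such tableaux is at most $\prod_j\binom{n}{r_j}$. Taking logarithms and using $\binom{n}{r}\le(en/r)^r$ bounds this by $\sum_j r_j\log(en/r_j)$. Since $t\mapsto t\log(en/t)$ is increasing on $(0,n)$ and $r_j<\tfrac{2\eps n}{j(j+1)}\le\eps n<n$, each term is at most its value at $t=\tfrac{2\eps n}{j(j+1)}$, and (all summands being nonnegative)
\[
  \sum_j r_j\log\frac{en}{r_j}\le\sum_{j\ge1}\frac{2\eps n}{j(j+1)}\log\frac{e\,j(j+1)}{2\eps}
    =2\eps n\Bigl(\log\tfrac{e}{2\eps}\sum_{j\ge1}\tfrac{1}{j(j+1)}+\sum_{j\ge1}\tfrac{\log(j(j+1))}{j(j+1)}\Bigr).
\]
The first series telescopes to $1$ and the second converges to an absolute constant $C$, so the number of tableaux is at most $\exp\!\bigl(2\eps n\log(e/2\eps)+C\eps n\bigr)$. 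I would finish by noting that $\sqrt{\eps}\log(e/2\eps)$ stays bounded on $(0,1)$ (it tends to $0$ as $\eps\to0$), so $2\eps\log(e/2\eps)+C\eps\le c\sqrt{\eps}$ for an absolute $c$, giving the stated bound.

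The one delicate point — the main obstacle — is that the column count must not be done wastefully: estimating $\binom{n}{r_j}$ by $n^{r_j}$, or applying convexity to the $r_j$ subject only to $\sum_j r_j<\eps n$ together with a bound on the number of columns, leaves a stray $\log n$ in the exponent and yields only $e^{O(\eps n\log n)}$. What rescues the estimate is precisely the quadratic weight $\tfrac{j(j+1)}{2}$ built into $w(\lambda)$, which forces $r_j=O(\eps n/j^2)$, so that $\sum_j r_j\log(en/r_j)$ is governed by the first few columns and is $O(\eps\log(1/\eps)\,n)$; the elementary fact $\sqrt{\eps}\log(1/\eps)=O(1)$ then upgrades this to the required $O(\sqrt{\eps}\,n)$.
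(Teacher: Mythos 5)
Your argument is correct and reaches the intended conclusion (the exponent in the statement should of course read $+c\sqrt{\eps}\,n$; like the paper's own proof you establish the upper bound $e^{c\sqrt{\eps} n}$), but it takes a genuinely different route. The paper starts from the explicit product formula $d_\lambda(n)=\prod_{(i,j)\in\lambda}(n+i-j)/(r_i(\lambda)+c_j(\lambda)-i-j+1)$, lower-bounds the denominator by $\prod_j c_j!$, and then controls $\sum_j c_j(\log n-\log c_j+1)$ via a dyadic decomposition of the column heights, using only the consequence $\sum_i r_i(\lambda)^2\lesssim\eps n$ of the weight hypothesis; this gives $\exp(O(\sqrt{\eps}\,n))$. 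You instead discard the row constraints, inject a tableau into the tuple of its column sets to get the elementary bound $\prod_j\binom{n}{c_j}$, and use the stronger per-column consequence $c_j\lesssim \eps n/j^2$ of the same hypothesis; the resulting series converges and yields $\exp\bigl(O(\eps\log(1/\eps)\,n)\bigr)$, which is sharper than the paper's bound and trivially implies it. Note that both arguments read $w(\lambda)=\sum_{(i,j)\in\lambda}j(j+1)/2$ literally, with $j$ the column index (the paper's aside that $w(\lambda)$ is the sum of entries of the minimal tableau does not match that formula, but the paper's proof, like yours, relies on the displayed formula), so you are on the same footing there. What your approach buys is elementarity --- no hook-content-type formula and no dyadic bucketing --- at no loss in the exponent; the product formula would only matter if one wanted asymptotically exact counts rather than an exponential upper bound.
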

\begin{proof}
Let $d_\lambda(n)$ denote the number of semi-standard tableaux with letters from $[n]$ on $\lambda$. Recall from \cite{Ful97} that
\[
  d_\lambda(n) = \prod_{(i,j) \in \lambda} \frac{n + i - j}{r_i(\lambda) + c_j(\lambda) - j - i + 1}
\]
where $r_i(\lambda)$ is the length of the $i$th row of $\lambda$ and $c_j(\lambda)$ is the length of the $j$th column. We can bound
\begin{align*}
  \prod_{(i,j) \in \lambda} (r_i(\lambda) + c_j(\lambda) - j - i + 1) &\geq \prod_{j=1}^{r_1(\lambda)} (c_j(\lambda)!) \\
  &= \exp(\sum_{j=1}^{r_1} c_j (\log c_j - 1)) O(\exp(c \sqrt{n})).
\end{align*}
Similarly,
\[
  \prod_{(i,j)} (n + i - j) \lesssim \exp (\sum_{j=1}^{r_1} c_j \log n).
\]
Therefore
\[
  d_\lambda(n) \lesssim \exp (\sum_{j=1}^{r_1} c_j (\log n - \log c_j + 1)).
\]
Now
\begin{align*}
    \sum_{j=1}^{r_1} c_j (\log n - \log c_j + 1) &\lesssim \sum_{t = 0}^{\log n} \sum_{j : e^t \leq c_j < e^{t+1}} e^t (\log n - t + 1) \\
    &\lesssim \sum_{t = 0}^{\log n} e^t(\log n - t + 1) \#\{j \mid e^t \leq c_j < e^{t+1}\}.
\end{align*}
Note that
\[
  e^t \#\{j \mid e^t \leq c_j \leq e^{t+1}\}^2 \leq \sum_{i=1}^{c_1} r_i^2 \lesssim \eps n
\]
so that
\[
  \#\{j \mid e^t \leq c_j \leq e^{t+1}\} \leq e^{-t/2} \sqrt{\eps n}
\]
and thus
\[
  d_\lambda(n) \lesssim \exp(\sum_{t=0}^{\log n} e^{t/2} (\log n - t + 1) \sqrt{\eps n}) \lesssim \exp(c n \sqrt{\eps})
\]
as required.
\end{proof}

We are now ready to prove Theorem~\ref{thm:padiccokernel} and Theorem~\ref{thm:compositecokernel}, conditional on Proposition~\ref{prop:universality}.
\begin{proof}[Proof of Theorem~\ref{thm:padiccokernel}]
Let $\eps > 0$ be chosen later and let $\lambda$ be the diagram for $G$. If $w(\lambda) > \eps n$ then we are done by Proposition~\ref{prop:largeweightdiagram}. Otherwise, from Corollary~\ref{cor:sumoversequences} and Corollary~\ref{cor:padictableau} we expand
    \[
      \mathbb{P}(\coker A \cong G) = \sum_{H / \lambda \text{ from } [n]} \mathbb{P}(T_\ell \cong H_\ell \text{ for all } \ell) + O(e^{-c \alpha n}).
    \]
    Expanding the event $\{T_\ell \cong H_\ell \text{ for all } \ell\}$ by conditional expectation, we get
    \[
      \mathbb{P}(\coker A \cong G) = \sum_{H / \lambda \text{ from } [n]} \prod_{\ell=0}^n \mathbb{P}(T_\ell \cong H_\ell \mid T_j \cong H_j \text{ for all } j > \ell) + O(e^{-c \alpha n}).
    \]
    We will abbreviate $C_\ell := \{T_j \cong H_j \text{ for all } j > \ell\}$.
    Let us also fix $H / \lambda$ and $\ell$ for now. Let $S$ denote the set of $j$ such that $r_j(H_\ell) = r_j(H_{\ell+1}) + 1$. By Proposition~\ref{prop:congintoenlarged}, we know
    \[
      \mathbb{P}(T_\ell \cong H_\ell \mid C_\ell) = \mathbb{P}((\neg G_{\abs{S}+1,\infty}) \cap \bigcap_{k =1}^{\abs{S}} E_{\eta(k),\eta(k)-k} \mid C_\ell)
    \]
    where $\eta$ is as defined in that proposition. Each $E_{\eta(k), \eta(k)-k} = G_{k, \eta(k)-k} \setminus G_{k, \eta(k)-k-1}$ (unless $\eta(k) = k$, in which case it is equal to $G_{k,0}$). Let us extend $\eta(\abs{S}+1) = \infty$ for notational convenience. By inclusion-exclusion we have
    \[
      \mathbb{P}(T_\ell \cong H_\ell \mid C_\ell) = \sum_{\sigma \subseteq [\abs{S}+1]} (-1)^{\abs{\sigma}+1} \mathbb{P} (\bigcap_{k \in \sigma} G_{k, \eta(k)-k}) \cap \bigcap_{k \notin \sigma} G_{k, \eta(k)-k-1} \mid C_\ell)
    \]
    For a given $\sigma$, let $\mathcal{F}(\sigma)$ denote the set of pairs of indices appearing in the subscripts of $G$ for the summand. Then we have by definition
    \[
      \mathbb{P}(T_\ell \cong H_\ell \mid C_\ell) = \sum_{\sigma \subseteq [\abs{S}+1]} (-1)^{\abs{\sigma}+1} \mathbb{P}(X_\ell \in \phi_{\mathcal{F}(\sigma)} \mid C_\ell).
    \]
    Because $w(\lambda) < \eps n$ there are at most $2^{c \eps n}$ terms appearing in the sum.
    Let $Y \in \Z_p^n$ be uniformly distributed. For $\ell < \eta n$, by Proposition~\ref{prop:universality} we have
    \[
      \mathbb{P}(X_\ell \in \phi_{\mathcal{F}(\sigma)} \mid C_\ell) = \mathbb{P}(Y \in \phi_{\mathcal{F}(\sigma)} \mid C_\ell) + O(e^{-c \alpha n}).
    \]
    Grouping terms and applying trivial bounds, for $\eps > 0$ sufficiently small we see that 
    \[
      \mathbb{P}(T_\ell \cong H_\ell \mid C_\ell) = \text{uniform} + O(e^{-c \alpha n}).
    \]
    By Proposition~\ref{prop:numberdiagrams} there are at most $O(e^{c \sqrt{\eps} n})$ tableau with letters from $n$. Therefore, rearranging we see that
    \[
      \mathbb{P}(\coker A \cong G) = \text{uniform} + O(e^{-c \alpha n}).
    \]
    Finally, by Proposition~1 of \cite{FW89} we know that
    \[
      \text{uniform} = \mu_{\text{CL}}(G) + O(e^{-cn}) = \frac{1}{\abs{\operatorname{Aut} G}} \prod_{j=1}^n (1 - p^{-j}) + O(e^{-cn})
    \]
    as was to be shown.
\end{proof}

\begin{proof}[Proof of Theorem~\ref{thm:compositecokernel}]
Let $\eps > 0$ be chosen later and let $p_1, \ldots, p_\omega$ be the set of primes dividing $N$. Let $\lambda_{p}$ denote the diagram for $G_{(p)}$. By Corollary~\ref{cor:sumoversequences} and Corollary~\ref{cor:compositetableau} we have
 \begin{align*}
   \mathbb{P}(\coker A \cong G) &= \mathbb{P}(\coker A_{(p)} \cong G_{(p)} \text{ for all } p \mid N) \\
   &= \sum_{\substack{H_{(p)} / \lambda_p \\ p \mid N}} \mathbb{P}(T_{(p),\ell} \cong H_{(p),\ell} \text{ for all } \ell, p) + O(e^{-c \alpha n}).
 \end{align*}
 If any $G_{(p)}$ is a $\Z_p$ module with $w(\lambda) > \eps n$, then the result follows from Proposition~\ref{prop:largeweightdiagram}. Otherwise, we expand by conditional expectation,
\[
  \mathbb{P}(\coker A \cong G) = \sum_{\substack{H_{(p)} / \lambda_p \\ p \mid N}} \prod_{\ell=0}^n \mathbb{P}(T_{(p),\ell} \cong H_{(p),\ell} \forall p \mid T_{(p),j} \cong H_{(p),j} \forall p, j > \ell) + O(e^{-c \alpha n}).
\]
Let
\[
  C_\ell = \{ T_{(p),j} \cong H_{(p),j} \forall p, j > \ell \}.
\]
By Proposition~\ref{prop:congintoenlarged} for each prime we have
\[
  \{T_{(p),\ell} \cong H_{(p),\ell} \text{ for all } p \mid N \mid C_\ell \} = \bigcap_{p \mid N} (\neg G^{(p)}_{\abs{S}+1,\infty}) \cap \bigcap_{k =1}^{\abs{S}} E^{(p)}_{\eta(k),\eta(k)-k}
\]
where $G^{(p)}$ and $E^{(p)}$ are given by
\[
  G_{a,b}^{(p)} = \{X_\ell \in p^a R^n + W_\ell[p^b]\} \qquad G_{a,\infty}^{(p)} = \{X_\ell \in p^a R^n + W_\ell[\infty]\}
\]
and
\[
  E_{j,t}^{(p)} = G_{j-t,t}^{(p)} \setminus G_{j-t,t-1}^{(p)}
\]
as before. We can expand each term in the product by inclusion-exclusion as in the proof of Theorem~\ref{thm:padiccokernel}. There are $O_\omega(2^{c \eps n})$ terms appearing in the sum. For each $\mathcal{F}$ in the expansion, by Proposition~\ref{prop:universality} we have
\[
  \mathbb{P}(X_\ell \in \phi_{\mathcal{F}} \mid C_\ell) = \mathbb{P}(Y \in \phi_{\mathcal{F}}(W_\ell) \mid C_\ell) + O(e^{-c \alpha n})
\]
where $Y$ is chosen uniformly in $(\Z/N\Z)^n$. Grouping terms, we derive
\[
  \mathbb{P}T_{(p),\ell} \cong H_{(p),\ell} \forall p \mid C_\ell) = \text{uniform} + O_\omega(e^{-c \alpha n}).
\]
As before, there are at most $O(e^{c \sqrt{\eps} n})$ tableau with letters from $n$ for each prime. Thus we deduce
\[
  \mathbb{P}(\coker A \cong G) = \text{uniform} + O_\omega(e^{-c \alpha n})
\]
as required.
\end{proof}

\section{Universality for enlarged submodules}

\subsection{Types of enlarged submodules}

We distinguish between four possible types of submodules $N$ that $\phi(W_{\ell + 1}) = \phi_{\mathcal{F}}(W_{\ell+1})$ can represent. Let $\delta$, $d$, and $D$ be constants to be chosen later. Intuitively, $\delta$ and $d$ will be of order $1/100$ while $D$ will be about $10$.
\begin{description}
    \item[sparse] There is a non-zero $w \perp N$ such that $\abs{\supp w} \leq \delta n$.  We will show that sparse submodules are represented with exponentially small probability by direct counting in Section~\ref{sec:sparse}.
    \item[unsaturated] $N$ is not sparse and
    \[
      \max(e^{-d\alpha n}, \frac{D}{\abs{N^\perp}}) \leq \abss{\mathbb{P}(X \in N) - \frac{1}{\abs{N^\perp}}}.
    \]
    We will use a generalized version of the swapping argument from \cite{TV06b}, \cite{TV07} to show that these appear with exponentially small probability in Section~\ref{sec:unsat}.
    \item[semi-saturated] $N$ neither sparse nor unsaturated, and we have the inequality
    \[
      e^{-d\alpha n} \leq \abss{\mathbb{P}(X \in N) - \frac{1}{\abs{N^\perp}}} < \frac{D}{\abs{N^\perp}}
    \]
Note that the set of semi-saturated $N$ may be empty if $\abs{N^\perp}$ is sufficiently large. We will count the semi-saturated submodules by finding a structured $w \perp N$ and then directly counting to show that they are represented with exponentially small probability. This is done in Section~\ref{sec:semisat}.
    \item[saturated] $N$ is neither sparse, unsaturated, nor semi-saturated. In particular, these submodules satisfy the universality property.
\end{description}

Proposition~\ref{prop:universality} follows if we can show that $\phi(W_{\ell})$ represents a saturated subspace with probability $1 - O(e^{-c \alpha n})$, where the constants are absolute.

\subsection{Analytic and Combinatorial Tools}

At this point we must recall some theory from analysis and additive combinatorics. The proofs are omitted; for more information see \cite{TV06a} and \cite{M10a}.

Let $\mu$ be a measure on $R$, which is a compact topological ring. For any $0 < \eps < 1$ we define the spectrum $\Spec_{1-\eps} \mu$ to be the set of Fourier coefficients with magnitude at least $1 - \eps$; i.e.~
\[
  \Spec_{1-\eps} \mu := \{\psi \in \widehat{R} \mid \abs{\mu(\psi)} \geq 1 - \eps\}
\]
The importance of $\Spec$ is that it is closed under a bounded number of set additions, as long as $\eps$ is enlarged sufficiently. Recall that for sets $A, B$ in an additive group $Z$, we define $A + B$ to be the sumset $\{a + b \mid a \in A, b \in B\}$.
\begin{lemma} \label{lem:specadds}
For all $\eps > 0$ and $k$ a positive integer, we have
\[
  \Spec_{1 - \eps} \mu + \cdots + \Spec_{1-\eps} \mu \subseteq \Spec_{1-k^2 \eps} \mu
\]
where there are $k$ summands on the left.
\end{lemma}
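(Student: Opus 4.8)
The plan is to reduce immediately to the case $k = 2$ and then iterate, since if $\Spec_{1-\eps}\mu + \Spec_{1-\eps}\mu \subseteq \Spec_{1-4\eps}\mu$ then by induction an $m$-fold sumset lands in $\Spec_{1-c_m\eps}\mu$ for some $c_m$, and one checks $c_m \le m^2$ works with the given normalization (the doubling step costs a factor $4 = 2^2$, and the quadratic growth is exactly what survives the telescoping; here one should be slightly careful to organize the induction as $\Spec_{1-(j-1)^2\eps}\mu + \Spec_{1-\eps}\mu \subseteq \Spec_{1-j^2\eps}\mu$, which follows from the two-set version applied with the enlarged first parameter). So the heart of the matter is: if $\abs{\widehat{\mu}(\psi)} \ge 1-\eps_1$ and $\abs{\widehat{\mu}(\varphi)} \ge 1-\eps_2$, then $\abs{\widehat{\mu}(\psi+\varphi)} \ge 1 - (\sqrt{\eps_1}+\sqrt{\eps_2})^2$, from which the stated inequality is the special case $\eps_1 = (j-1)^2\eps$, $\eps_2 = \eps$.

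First I would write $\widehat{\mu}(\psi) = \int_R \psi(x)\,d\mu(x)$ and observe that $\abs{\widehat{\mu}(\psi)} \ge 1-\eps$ means, after rotating $\psi$ by a unimodular constant (which does not affect $\abs{\widehat{\mu}}$), that $\operatorname{Re}\widehat{\mu}(\psi) \ge 1-\eps$, hence $\int_R (1 - \operatorname{Re}\psi(x))\,d\mu(x) \le \eps$. Since $1 - \operatorname{Re}\psi(x) = 2\sin^2(\tfrac12\arg\psi(x)) \ge \tfrac{1}{2}\abs{1-\psi(x)}^2$ (or simply $\abs{1-\psi(x)}^2 = 2(1-\operatorname{Re}\psi(x))$ exactly since $\abs{\psi(x)}=1$), this says $\norm{1-\psi}_{L^2(\mu)}^2 \le 2\eps$. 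Then for the sum character, $1 - \psi(x)\varphi(x) = (1-\psi(x)) + \psi(x)(1-\varphi(x))$, so by the triangle inequality in $L^2(\mu)$ and $\abs{\psi(x)}=1$ we get $\norm{1 - \psi\varphi}_{L^2(\mu)} \le \norm{1-\psi}_{L^2(\mu)} + \norm{1-\varphi}_{L^2(\mu)} \le \sqrt{2\eps_1} + \sqrt{2\eps_2}$. Squaring and dividing by $2$, $1 - \operatorname{Re}\widehat{\mu}(\psi\varphi) = \tfrac12\norm{1-\psi\varphi}_{L^2(\mu)}^2 \le (\sqrt{\eps_1}+\sqrt{\eps_2})^2$, hence $\abs{\widehat{\mu}(\psi+\varphi)} \ge \operatorname{Re}\widehat{\mu}(\psi\varphi) \ge 1 - (\sqrt{\eps_1}+\sqrt{\eps_2})^2$ (using that $\psi+\varphi$ as a character is the pointwise product $\psi\varphi$).

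I do not expect any serious obstacle here; the only things to be careful about are the bookkeeping of the induction (making sure the parameter that appears is $k^2\eps$ and not, say, $2^{k}\eps$ — this is why one induces with the asymmetric step $\Spec_{1-(j-1)^2\eps} + \Spec_{1-\eps} \subseteq \Spec_{1-j^2\eps}$, where $(\sqrt{(j-1)^2\eps}+\sqrt{\eps})^2 = j^2\eps$ exactly), the harmless rotation of characters by unimodular scalars, and the trivial but necessary remarks that $\widehat{\mu}(\psi)$ has modulus at most $1$ throughout so all the $\eps$'s may be assumed $< 1$ and the estimates are non-vacuous. One should also note the sumset is empty or the bound vacuous when $1-k^2\eps \le 0$, in which case there is nothing to prove.
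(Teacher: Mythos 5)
Your proposal is correct: reducing to the two-set statement via the exact identity $\abs{1-\psi(x)}^2 = 2(1-\operatorname{Re}\psi(x))$ and the triangle inequality in $L^2(\mu)$, then inducting with the asymmetric step $\Spec_{1-(j-1)^2\eps}\mu + \Spec_{1-\eps}\mu \subseteq \Spec_{1-j^2\eps}\mu$, gives exactly the stated $k^2\eps$ loss, and the unimodular rotations and degenerate case $1-k^2\eps\le 0$ are handled appropriately. The paper omits the proof of Lemma~\ref{lem:specadds} entirely (deferring to \cite{TV06a} and \cite{M10a}), and your argument is the standard one used in those references, so there is nothing further to reconcile.
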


Recall that $\Sym(A)$ denotes the largest subgroup of $Z$ such that $A$ is the union of cosets of $\Sym(A)$.
\begin{lemma}[Kneser]
    If $A, B \subseteq Z$ are additive sets in an ambient, finite abelian group $Z$, then we have the bound
    \[
      \abs{A + B} + \abs{\Sym(A + B)} \geq \abs{A} + \abs{B}
    \]
\end{lemma}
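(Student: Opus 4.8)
The plan is to deduce the stated inequality from the sharper \emph{Kneser addition theorem}: writing $H := \Sym(A+B)$ for the stabilizer of $A+B$ (a subgroup of $Z$, since $Z$ is finite), one has
\[
  \abs{A+B} \geq \abs{A+H} + \abs{B+H} - \abs{H}.
\]
Granting this, the lemma is immediate: because $0 \in H$ we have $A \subseteq A+H$ and $B \subseteq B+H$, so $\abs{A+B} \geq \abs{A} + \abs{B} - \abs{H}$, which rearranges to $\abs{A+B} + \abs{\Sym(A+B)} \geq \abs{A} + \abs{B}$. (We may assume $A,B$ nonempty, the empty case being trivial since $\Sym(\emptyset)=Z$.)

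To prove the addition theorem I would induct on $\abs{B}$, after translating so that $0 \in A \cap B$. The base case $\abs{B}=1$ is trivial: $A+B$ is a translate of $A$, so $H = \Sym(A)$, $\abs{A+H}=\abs{A}$, and $\abs{B+H}=\abs{H}$, giving equality. For the inductive step the key device is the Dyson $e$-transform: for $e \in Z$ put
\[
  A_e := A \cup (B+e), \qquad B_e := B \cap (A-e).
\]
One verifies three elementary facts: $A_e + B_e \subseteq A+B$; $\abs{A_e} + \abs{B_e} = \abs{A} + \abs{B}$ (inclusion–exclusion, using $\abs{B_e} = \abs{A \cap (B+e)}$ and $\abs{A_e}=\abs{A}+\abs{B}-\abs{A\cap(B+e)}$); and whenever $e = a-b$ with $a\in A$, $b\in B$ the set $B_e$ is nonempty and $B_e \subseteq B$. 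If some such $e$ gives $\abs{B_e} < \abs{B}$, we apply the inductive hypothesis to $(A_e,B_e)$ and transfer the bound back along $A_e+B_e \subseteq A+B$. If instead $\abs{B_e}=\abs{B}$ for every $e = a-b$, then $B \subseteq A-e$ for all such $e$, i.e.\ $A + (B-B) \subseteq A$, so $A$ is invariant under the subgroup $K := \langle B-B\rangle$; in that degenerate case $B$ lies in a single $K$-coset, $A+B = A+b_0$ for any $b_0\in B$, $H = \Sym(A) \supseteq K$, and the required identity is checked directly.

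The step I expect to be the main obstacle is the bookkeeping of stabilizers through the induction: the Dyson transform need not preserve $\Sym$, so passing from a bound for $(A_e,B_e)$ with its stabilizer $H' := \Sym(A_e+B_e)$ to the desired bound for $(A,B)$ with $H$ requires care. The standard resolution is to reduce modulo an auxiliary subgroup — working in $Z/H'$ or in $Z/K$ — or, following Kemperman, to split into cases according to whether $H'$ is trivial and whether $A+B$ is already $H$-periodic; this case analysis is the genuine content of Kneser's theorem, everything else being the routine cardinality identities above. (For $Z=\Z/p\Z$ the statement degenerates to Cauchy–Davenport, which has a short Fourier-analytic proof, but no such shortcut exists for general finite $Z$.)
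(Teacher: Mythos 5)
Your reduction is fine as far as it goes: deducing the stated inequality from the addition-theorem form $\abs{A+B} \geq \abs{A+H} + \abs{B+H} - \abs{H}$ with $H = \Sym(A+B)$ is immediate, and your base case and the degenerate case (where $\abs{B_e} = \abs{B}$ for every $e = a-b$, so $A$ is $\langle B-B\rangle$-periodic and $A+B$ is a translate of $A$) both check out. But the proposal stops exactly where the theorem lives. In the non-degenerate inductive step, the hypothesis applied to $(A_e,B_e)$ gives a bound in terms of $H' := \Sym(A_e+B_e)$, and transferring it along $A_e+B_e \subseteq A+B$ only yields $\abs{A+B} \geq \abs{A} + \abs{B} - \abs{H'}$. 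That is not the desired conclusion, because $H'$ stabilizes the smaller sumset, not $A+B$: it can be a large subgroup with $\abs{H'} > \abs{\Sym(A+B)}$, so the naive induction genuinely fails rather than merely ``requiring care.'' Your acknowledgement of this obstacle, together with a pointer to ``reduce modulo an auxiliary subgroup'' or ``following Kemperman,'' is a description of where the proof should go, not a proof; the stabilizer case analysis (e.g.\ splitting on whether $A+B$ is already $H$-periodic and passing to the quotient $Z/H$, as in Tao--Vu) is the entire content of Kneser's theorem, and it is absent.

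For calibration: the paper itself does not prove this lemma at all --- it is stated in the subsection of analytic and combinatorial tools whose proofs are explicitly omitted, with references to \cite{TV06a} and \cite{M10a}. So a citation is the route the paper takes, and is perfectly acceptable here; what is not acceptable is presenting the easy reduction plus the Dyson-transform setup as if the remaining case analysis were routine. Either cite the result, or carry out the quotienting/periodicity argument in full.
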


Iterating Kneser's inequality we have the following corollary.
\begin{corollary}
Let $A_1, ..., A_k$ and $B$ be additive sets in an ambient finite abelian group $Z$. Suppose we have the sumset inclusion
\[
  A_1 + \cdots + A_k \subseteq B.
\]
Suppose further that $B$ contains no additive cosets of $Z$. Then we have
\[
  \abs{B} + (k-1) \geq \abs{A_1} + \cdots + \abs{A_k}
\]
\end{corollary}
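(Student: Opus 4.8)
The plan is to induct on $k$, with Kneser's inequality doing the work in the single inductive step. The base case $k=1$ is immediate: the hypothesis $A_1 \subseteq B$ gives $\abs{B} \geq \abs{A_1}$, which is exactly the claimed bound when $k=1$.

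For the inductive step I would set $S := A_1 + \cdots + A_{k-1}$, so that $S + A_k \subseteq B$. The observation I want to record first is that $S$ itself contains no additive coset of $Z$: if $z + H \subseteq S$ for some nontrivial subgroup $H \leq Z$, then picking any $a \in A_k$ (an additive set is nonempty) we would get $(z+a) + H \subseteq S + A_k \subseteq B$, contradicting the hypothesis on $B$. The same argument shows $\Sym(S + A_k)$ is trivial: since $S + A_k$ is by definition a union of cosets of $\Sym(S + A_k)$, a nontrivial $\Sym(S + A_k)$ would force $B$ to contain one of those cosets.

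Now I would apply Kneser's inequality to the pair $S$, $A_k$; combined with $\abs{\Sym(S + A_k)} = 1$ and $S + A_k \subseteq B$ this gives $\abs{B} \geq \abs{S + A_k} \geq \abs{S} + \abs{A_k} - 1$. Applying the induction hypothesis to $A_1, \ldots, A_{k-1}$ with the set $S$ playing the role of $B$ — which is legitimate precisely because $S$ contains no additive cosets of $Z$, as just checked — yields $\abs{S} \geq \abs{A_1} + \cdots + \abs{A_{k-1}} - (k-2)$. Substituting into the previous inequality gives $\abs{B} \geq \abs{A_1} + \cdots + \abs{A_k} - (k-1)$, which rearranges to the statement.

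I do not expect any real obstacle here; the only point that needs a moment's care is the first one above, namely verifying that the hypothesis ``$B$ contains no additive cosets of $Z$'' is inherited by the partial sumset $S$, so that the induction hypothesis genuinely applies at the smaller value of $k$. The rest is bookkeeping with the constants.
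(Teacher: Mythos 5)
Your proof is correct and is exactly the iteration of Kneser's inequality that the paper invokes (the paper gives no written proof beyond ``iterating Kneser''), with the one genuinely needed check---that the partial sumset $A_1+\cdots+A_{k-1}$, and hence $\Sym(A_1+\cdots+A_k)$, inherits the coset-free property from $B$ via translation by an element of $A_k$---handled properly. Nothing further is needed.
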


\subsection{Sparse submodules} \label{sec:sparse}

We would like to control the probability that $\phi(W_{\ell + 1})$ is sparse. By definition,
\[
  \mathbb{P}(\phi(W_{\ell+1}) \text{ is sparse}) = \mathbb{P}(\phi(W_{\ell+1}) \perp w \text{ non-zero with } \abs{\supp w} \leq \delta n)
\]
In particular, if we construct the $n \times (n - \ell)$ rectangular matrix
\[
  B := \begin{bmatrix} X_{\ell + 1} & \cdots & X_n \end{bmatrix}
\]
then $w^t B = 0$ for some non-zero $w$ with $\abs{\supp w} \leq \delta n$.

The coefficients of $w$ are contained in a finite $R$-submodule of $\widehat{R}$, since elements of $\widehat{R}$ have finite order. We see that there is a maximal ideal $\mathfrak{m}$ such that $w \bmod \mathfrak{m}$ is not constant zero. Reducing modulo this ideal, and possibly shrinking $\supp w$, we conclude that $w^t B = 0$ modulo $\mathfrak{m}$.

We can now finish the argument as in \cite{M10a}. In particular, we can restrict $B$ to those rows corresponding to non-zero entries of $w$. This gives a $\abs{\sigma} \times n - \ell$ matrix that is not of full rank. Regardless of the choice of spanning columns, the remaining columns must lie in their span and in particular be perpendicular to $w$. We recall the ``classical'' Littlewood-Offord theorem for finite fields from \cite{M10a}.

\begin{lemma} \label{lem:littlewoodofford}
    Let $X \in \mathbb{F}_q^n$ be a random vector with iid entries taken from a probability distribution $\mu$ with min-entropy $\alpha$. Suppose $w \in \mathbb{F}_q^n$ has at least $m$ non-zero coefficients.  Then we have the estimate
    \[
      \abss{\mathbb{P}(X \cdot w = r) - q^{-1}} \lesssim \frac{1}{\sqrt{\alpha m}}
    \]
    for all $r \in \mathbb{F}_q$.
\end{lemma}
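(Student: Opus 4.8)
The plan is to run the classical Fourier-analytic Littlewood--Offord argument. Fix $r \in \mathbb{F}_q$. Since the entries of $X$ are independent, expanding the indicator of $\{X\cdot w = r\}$ over the dual group $\widehat{\mathbb{F}_q}$ gives
\[
  \mathbb{P}(X\cdot w = r) = \frac1q \sum_{\psi \in \widehat{\mathbb{F}_q}} \overline{\psi(r)} \prod_{i\,:\,w_i \neq 0} \widehat\mu\bigl(\psi(w_i\,\cdot)\bigr),
\]
where $\widehat\mu(\chi) := \mathbb{E}[\chi(\xi)]$ and $\psi(w_i\,\cdot)$ is the character $s \mapsto \psi(w_i s)$. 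The trivial character contributes exactly $q^{-1}$, so it remains to bound the tail $\frac1q\sum_{\psi \neq 1}\prod_{i:\,w_i\neq0}\abs{\widehat\mu(\psi(w_i\,\cdot))}$ by $O(1/\sqrt{\alpha m})$.

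To obtain a genuine square-root saving I would \emph{not} estimate each factor away from $1$ (that loses a power of $q$), but keep only $2k := 2\lfloor m/2\rfloor$ of the at least $m$ nonzero coordinates, discard the remaining factors (each is $\le 1$), and apply AM--GM to bound the retained product by $\frac1{2k}\sum_{\ell=1}^{2k}\abs{\widehat\mu(\psi(w_{j_\ell}\,\cdot))}^{2k}$. Since $\mathbb{F}_q$ is a field and $w_{j_\ell}\neq0$, the substitution $\psi\mapsto\psi(w_{j_\ell}\,\cdot)$ permutes the nontrivial characters, so after summing over $\psi$ the index $\ell$ drops out and the tail is at most $\frac1q\sum_{\psi\neq1}\abs{\widehat\mu(\psi)}^{2k}=\frac1q\sum_{\psi\neq1}\widehat g(\psi)^k$, where $g$ is the law of $\xi-\xi'$ for an independent copy $\xi'$ of $\xi$. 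Here $g$ is symmetric, $\widehat g(\psi)=\abs{\widehat\mu(\psi)}^2\in[0,1]$, and $g(0)=\norm{\mu}_2^2\le\norm{\mu}_\infty\le1-\alpha$, so $\gamma:=1-g(0)=\mathbb{P}(\xi\neq\xi')\ge\alpha$.

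The heart of the matter is the anti-concentration estimate $\frac1q\sum_{\psi\neq1}\widehat g(\psi)^k\lesssim 1/\sqrt{\gamma k}$. I would use $\widehat g(\psi)^k\le e^{-k(1-\widehat g(\psi))}$ and $1-\widehat g(\psi)=\sum_{s\neq0}g(s)\bigl(1-\operatorname{Re}\psi(s)\bigr)$, specialising to $q=p$ prime (the only case needed, since $R/\mathfrak{m}=\mathbb{F}_p$), so $\psi=\psi_b$ with $\psi_b(s)=e_p(bs)$; then $1-\cos(2\pi t/p)\ge 8\norm{t/p}^2$ (distance to the nearest integer) and $\norm{bs/p}\ge 1/p$ for $b,s\neq0$. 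Applying Jensen's inequality for the convex map $x\mapsto e^{-8k\gamma x}$ against the probability measure $g(s)/\gamma$ on $s\neq0$ bounds $\frac1p\sum_{b\neq0}\widehat g(\psi_b)^k$ by $\frac1{p\gamma}\sum_{s\neq0}g(s)\sum_{b\neq0}e^{-8k\gamma\norm{bs/p}^2}$; for fixed $s\neq0$ the map $b\mapsto bs$ permutes $\mathbb{F}_p^\times$, so the inner sum equals $\sum_{c\neq0}e^{-8k\gamma\norm{c/p}^2}\le 2\int_0^\infty e^{-8k\gamma x^2/p^2}\,dx=p\sqrt{\pi/(8k\gamma)}$, whence the whole expression is $\le\sqrt{\pi/(8k\gamma)}$. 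Since $\gamma\ge\alpha$ and $2k\ge m/2$ (the case $m=1$ being trivial, as the left-hand side of the lemma is at most $1$), this is $O(1/\sqrt{\alpha m})$, completing the proof.

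The step I expect to be the main obstacle is precisely this last anti-concentration bound: the naive per-frequency estimate $\abs{\widehat g(\psi_b)}\le 1-c\alpha/p^2$ is too lossy by a factor of $p$, and one must average over the whole dual group -- via the Jensen/Gaussian-sum manoeuvre above -- to get a constant free of $q$. I should also note that over a proper extension $\mathbb{F}_q=\mathbb{F}_{p^f}$ with $f\ge2$ the argument additionally needs $\mu$ not to concentrate on a coset of an additive hyperplane, so that $\operatorname{Tr}(bs)$ cannot vanish for every $s\in\supp g$ -- exactly the non-degeneracy flagged in the introduction -- but this does not intervene here, where reduction modulo a maximal ideal always lands in a prime field.
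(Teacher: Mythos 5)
Your argument is correct, and in fact the paper does not prove this lemma at all: it is quoted as the ``classical'' finite-field Littlewood--Offord bound and referred to \cite{M10a}, so there is no in-paper proof to compare against. Your route is the standard Esseen/Hal\'asz one, and all the steps check out: the character-sum expansion isolates the trivial character as $q^{-1}$; the AM--GM (equivalently H\"older) decoupling over $2k$ retained coordinates, combined with the fact that $\psi \mapsto \psi(w_{j_\ell}\,\cdot)$ permutes the nontrivial characters, reduces the tail to $\frac1q\sum_{\psi\neq 1}\abs{\widehat\mu(\psi)}^{2k}$; and the Jensen-plus-Gaussian-sum computation with $1-\cos(2\pi t)\geq 8\norm{t}^2$ and $\gamma=\mathbb{P}(\xi\neq\xi')\geq\alpha$ delivers the $O(1/\sqrt{\gamma k})$ bound with constants independent of $p$, which is exactly the point where naive per-frequency estimates fail. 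Your caveat about $q=p^f$ with $f\geq 2$ is also well taken: as stated for general $\mathbb{F}_q$ the lemma is false without an extra non-degeneracy hypothesis (e.g.\ $\mu$ uniform on $\mathbb{F}_2\subset\mathbb{F}_4$ and $w=(1,\dotsc,1)$ gives a deviation of $1/4$ for all $m$), but in the paper's application $q$ is always a residue field of $\Z_p$ or $\Z/N\Z$, hence prime, and the introduction's remark about additive cosets acknowledges precisely this issue; so restricting to prime $q$ is the right reading of the statement. The only cosmetic points worth tightening are to say explicitly that the discarded factors are bounded by $1$ because $\abs{\widehat\mu}\leq 1$, and to record $k\geq m/4$ so the constant in $\lesssim$ is genuinely absolute.
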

Combining these estimates, we see that
\[
  \mathbb{P}(\phi(W_\ell) \text{ is sparse}) \lesssim \sum_{k=1}^{\delta n} \binom{n}{k} \binom{n-\ell}{k-1} \min(1-\alpha,q^{-1} + \frac{1}{\sqrt{\alpha k}})^{n-\ell-k+1}
\]
where $q$ is the smallest residue field for a maximal ideal of $R$. If we choose $\delta$ sufficiently small than this quantity is bounded by $O(e^{-c \alpha n})$.

\subsection{Unsaturated submodules} \label{sec:unsat}

The singularity bounds of Tao and Vu in \cite{TV06b}, \cite{TV07} are based on swapping the columns of the matrix $A$ with new columns drawn from a more singular probability distribution. Informally, if we have random vectors $X$ and $Y$ such that
\[
  \mathbb{P}(X \in V) \leq c \mathbb{P}(X \in V)
\]
for some $0 < c < 1$ and all $V$ in some class of vector spaces, then we can conclude that
\[
  \mathbb{P}(X_2, ..., X_n \text{ span } V) \leq c^{n-1} \mathbb{P}(Y_2, ..., Y_n \text{ span } V)
\]
modulo some difficulties with linear independence. If we would like to show that the columns of $A$ span such $V$ with small probability, it therefore suffices to use much worse bounds (such as the trivial bound) for the ``swapped in'' vectors $Y$.

In \cite{M10a} the author showed that this argument can also be used in the finite field setting as long as the vector spaces are not close to saturation for the random vectors. Explicitly, it was required that
\[
  \abs{\mathbb{P}(X \in V) - \abs{V^\perp}^{-1}} \geq D \abs{V^\perp}^{-1}
\]
for some $D$ (about 10).

In our more general setting, the actual swapping lemma is a straightforward generalization of the swapping lemma from \cite{M10a}.  We will prove the following swapping lemma in Section~\ref{sec:swap}.
\begin{lemma} \label{lem:swap}
There exists a probability distribution $\nu \in R$ with min-entropy $\alpha/8$ with the following property. Suppose $Y \in R^n$ is a random vector with iid entries taken from $\nu$. Then for every submodule $N \lhd R^n$ that is not sparse, we have the inequality
\[
  \abs{\mathbb{P}(X \in N) - \abs{N^\perp}^{-1}} \leq \left( \frac12 + o(1) \right) \abs{\mathbb{P}(Y \in N) - \abs{N^\perp}^{-1}}.
\]
\end{lemma}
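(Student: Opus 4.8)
The plan is to reduce the estimate on a general non-sparse submodule $N$ to a one-dimensional Fourier comparison between $\xi$ and a suitably constructed $\nu$. First I would fix a maximal ideal $\mathfrak{m}$ with residue field $\mathbb{F}_q$ and pass to the dual: writing $N^\perp \subseteq \widehat{R^n}$, the probability $\mathbb{P}(X \in N)$ expands via Fourier inversion as $\abs{N^\perp}^{-1}\sum_{\psi \in N^\perp} \prod_{i=1}^n \widehat{\xi}(\psi_i)$, where $\psi_i$ is the $i$th coordinate of the character $\psi$. The $\psi = 0$ term contributes exactly $\abs{N^\perp}^{-1}$, so
\[
  \mathbb{P}(X \in N) - \abs{N^\perp}^{-1} = \abs{N^\perp}^{-1}\sum_{\psi \in N^\perp \setminus \{0\}} \prod_{i=1}^n \widehat{\xi}(\psi_i),
\]
and the analogous identity holds for $Y$ with $\widehat{\xi}$ replaced by $\widehat{\nu}$. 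Thus it suffices to produce $\nu$ of min-entropy at least $\alpha/8$ such that $\abs{\widehat{\xi}(\chi)} \le (\tfrac12 + o(1))\,\abs{\widehat{\nu}(\chi)}$ for every nontrivial character $\chi$ of $R$ that can appear as a coordinate $\psi_i$ of some nonzero $\psi \in N^\perp$ — i.e.\ (after the sparse case is excluded) for every $\chi$ in a set of coordinates whose support is large, $\ge \delta n$ of them nonzero. Summing the per-coordinate bound over those $\ge \delta n$ indices gives a factor $(\tfrac12 + o(1))^{\delta n}$ improvement term-by-term, which is far stronger than the claimed $(\tfrac12 + o(1))$; one then bounds the tail of the $\psi$-sum crudely, exactly as in the sparse/Odlyzko estimates, so the dominant behavior is governed by the worst single character.

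The construction of $\nu$ follows \cite{M10a}: take $\nu$ to be the law of $\frac{1}{2}(\xi_1 + \xi_2 - \xi_3 - \xi_4)$ in a formal sense — more precisely, since we are in $R$ and cannot divide by $2$, one instead symmetrizes. I would set $\nu$ to be the distribution of $\xi_1 - \xi_2$ conditioned appropriately, or better, use the standard trick of letting $Y$ have entries distributed as a mixture that makes $\widehat{\nu} = \abs{\widehat{\xi}}^2$ (the law of $\xi_1 - \xi_2$), which is real and nonnegative, and then iterate once more so that $\widehat{\nu} = \abs{\widehat{\xi}}^4$ has the property that $\abs{\widehat{\xi}} \le \abs{\widehat{\xi}}^{1/2}$-type domination holds whenever $\abs{\widehat{\xi}}$ is bounded away from $1$. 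The min-entropy bookkeeping is where the constant $1/8$ comes from: if $\xi$ has min-entropy $\alpha$, then $\xi_1 - \xi_2$ has min-entropy at least, say, $\alpha/2$, and one more symmetrization halves it again, with a little room to spare, landing at $\alpha/8$.

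The content of the argument — and the main obstacle — is handling the characters $\chi$ for which $\abs{\widehat{\xi}(\chi)}$ is itself close to $1$, since there the multiplicative domination $\abs{\widehat{\xi}(\chi)} \le (\tfrac12+o(1))\abs{\widehat{\nu}(\chi)}$ simply fails (both sides are near $1$). This is exactly the role of the spectrum $\Spec_{1-\eps}\xi$ and Lemma~\ref{lem:specadds}: the set of ``bad'' characters where $\widehat{\xi}$ is large is highly structured (a near-coset, by Kneser/the structure of $\Spec$), and the hypothesis that $N$ is \emph{not sparse} is precisely what forces the perpendicular characters $\psi$ to have large support, hence many coordinates $\psi_i$ that escape this structured bad set — if all coordinates of $\psi$ stayed in the bad spectrum, one could run the sparse-vector counting argument of Section~\ref{sec:sparse} to conclude $N$ is sparse, a contradiction. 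So the proof splits $N^\perp \setminus \{0\}$ according to how many coordinates of $\psi$ lie in $\Spec_{1-\eps}\xi$; for each $\psi$ at least $cn$ coordinates are outside, each contributing the domination factor, and the number of such $\psi$ is controlled by $\abs{N^\perp}$ times a combinatorial factor that the gained exponential absorbs. I expect the delicate point to be choosing $\eps$ (and the threshold defining ``large'' Fourier coefficient) uniformly in $n$ so that the spectrum-addition lemma still applies while the per-coordinate gain stays at least $\tfrac12 + o(1)$; this is routine but fiddly, and is the step I would write out most carefully.
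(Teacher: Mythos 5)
Your Fourier-inversion reduction and your identification of the danger zone (characters where $\abs{\widehat{\mu}}$ is close to $1$) match the paper, but the way you propose to get past that zone does not work, and it is exactly the point where the paper's proof has real content. You claim that non-sparseness of $N$ forces every nonzero $\psi \in N^\perp$ to have many coordinates \emph{outside} $\Spec_{1-\eps}\mu$, so that a per-coordinate domination $\abs{\widehat{\mu}(\chi)} \le (\tfrac12+o(1))\widehat{\nu}(\chi)$ can be applied on those coordinates. That implication is false: non-sparseness only says that every nonzero $w \perp N$ has more than $\delta n$ \emph{nonzero} coordinates, and nonzero coordinates may perfectly well all lie in the spectrum. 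The min-entropy hypothesis is a condition modulo $p$ only, and it does not exclude nonzero characters of high order from $\Spec_{1-\eps}\mu$ (for instance $\mu$ uniform on $\{0,1,\dots,p-1\}\subset\Z_p$ has min-entropy $1-1/p$ yet $\abs{\widehat{\mu}}$ is nearly $1$ at characters of order $p^k$ for large $k$). So there are non-sparse $N$ and $\psi \in N^\perp$ for which \emph{every} coordinate sits in the bad spectrum, your term-by-term gain $(\tfrac12+o(1))^{\delta n}$ evaporates, and the ``run the sparse counting argument to get a contradiction'' step has no basis. The paper handles precisely this regime by a completely different mechanism: it compares the \emph{level sets} $F(u)=\{\psi\in N^\perp : f(\psi)\ge u\}$ and $G(u)$ of $f(\psi)=\prod_\ell\abs{\widehat{\mu}(\psi_\ell)}$ and $g(\psi)=\prod_\ell\widehat{\nu}(\psi_\ell)$, proves the sumset inclusion $F(u)+F(u)\subseteq G(u)$ from the pointwise bound $\abs{\widehat{\mu}(t)\widehat{\mu}(s)}\le\widehat{\nu}(t+s)^2$ (via Lemma~\ref{lem:specadds}), and applies Kneser's theorem, $2\abs{F(u)}\le\abs{\Sym(F(u)+F(u))}+\abs{G(u)}$; the factor $\tfrac12$ is this doubling, not a per-coordinate gain. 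Non-sparseness enters only to show $\Sym$ is trivial: one takes a minimal subgroup $H\le G(u)$ generated by some $w\in N^\perp$ with at least $\delta n$ nonzero entries and rules it out by averaging $\sum_\ell(1-\widehat{\nu}(tw_\ell)^2)$ over $H$ and using Plancherel together with the min-entropy bound. The few $\psi$ with $f(\psi)$ small are handled separately by the pointwise inequality $f\le g^4$.

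Your construction of $\nu$ is also off in a way that matters. Taking $\nu$ to be a pure symmetrized convolution gives $\widehat{\nu}=\abs{\widehat{\xi}}^2$ (or $\abs{\widehat{\xi}}^4$), which satisfies $\widehat{\nu}\le\abs{\widehat{\xi}}$ pointwise --- the \emph{wrong} direction: the swapped distribution must be \emph{more} concentrated on submodules, not less. The paper instead uses the lazy mixture $\nu(t)=\gamma\,(\mu*\mu^-)(t)$ for $t\neq 0$ with the remaining mass at $0$ and $\gamma=1/8$, so that $\widehat{\nu}=1-\gamma+\gamma\abs{\widehat{\mu}}^2\ge\abs{\widehat{\mu}}$ pointwise and $\widehat{\nu}\ge 1-2\gamma>0$ everywhere (this lower bound is what makes $f\le g^4$ usable); the min-entropy $\alpha/8$ is simply $\gamma\alpha$ from the mixture weight, not from convolution ``halving'' min-entropy (convolution does not reduce min-entropy at all). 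So the proposal as written has a genuine gap in the main comparison and a defective choice of $\nu$; the correct route is the level-set/Kneser argument sketched above.
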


We need a new notation for linear independence. In addition to avoiding linear dependencies, it is important that our vectors not introduce additional cokernel. We therefore say that $Y_1, ..., Y_r$ are \emph{a subbasis in $N$} if $Y_1, ..., Y_r \in N$ and $R^n / \langle Y_1, ..., Y_r \rangle \cong R^{n-r}$. Equivalently, the vectors $Y_1, \ldots, Y_r$ are linearly independent modulo every maximal ideal of $R$.

As in \cite{TV06b} and \cite{M10a}, we require a dyadic decomposition on the magnitude of $\mathbb{P}(X \in N)$. We recall that the \emph{combinatorial codimension} of $N$ is the unique fraction $d_\pm \in n^{-1} Z^+$ such that
\[
  (1 - \alpha)^{d_\pm/n + 1/n} \leq \mathbb{P}(X \in N) \leq (1 - \alpha)^{d_\pm/n}.
\]

Since we have the trivial bounds $2^{-n} \leq \mathbb{P}(X \in N) \leq 1$ we see that there are at most $O(\alpha n^2)$ possible combinatorial codimensions, so that it suffices to estimate
\[
  \mathbb{P}(\phi(W_{\ell+1}) \text{ unsaturated with } d_\pm(W) = d_\pm) \leq O(e^{-cn})
\]
for all $d_\pm$.

Let $V^\perp$ denote the isomorphism class of $\phi(W_{\ell+1})^\perp$. Let $N$ be an unsaturated submodules with $N^\perp \cong V^\perp$ of combinatorial codimension $d_\pm$. Let $Y_1, ..., Y_r$ be iid copies of $Y$ given by Lemma~\ref{lem:swap} and let $X_1', ..., X_s'$ be iid copies of $X$. Then we have
\[
  \mathbb{P}(\phi(W_{\ell+1}) = N) = \frac{\mathbb{P}(\phi(W_{\ell+1}) = N \wedge Y_1, ..., Y_r, X_1', ..., X_s' \text{ are a subbasis in } N)}{\mathbb{P}(Y_1, ..., Y_r, X_1, ..., X_s' \text{ are a subbasis in } N)}.
\]
Because we have assumed that $R^n / \langle Y_1, ..., Y_r, X_1', ..., X_s' \rangle \cong R^{n-r-s}$, we can replace $r+s$ of the column vectors $X_{\ell + 1}, ..., X_n$ with $Y_1, ..., Y_r, X_1', ..., X_s'$ with the following proposition.
\begin{proposition} \label{prop:dotheswap}
Suppose $\phi(W_{\ell+1}) = N$ and $Z_1, ..., Z_j$ are simply independent in $N$. Then there is a subset $\sigma \subseteq [\ell + 1, n]$ with $\abs{\sigma} = r$ such that
\[
  \phi(\langle \{X_k\}_{k \notin \sigma}, Z_1, ..., Z_j \rangle) = N.
\]
Furthermore, we can choose $\sigma$ so that if $k \in \sigma$, we have $\ker H_{k+1} \longrightarrow H_k \cong R$.
\end{proposition}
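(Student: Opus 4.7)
The plan is to prove this proposition by an inductive column-exchange (matroid-style) argument, constructing $\sigma$ one index at a time. A first useful observation is a one-sided containment that comes for free: every candidate enlargement $M_\sigma := \langle \{X_k\}_{k \notin \sigma} \cup \{Z_1,\ldots,Z_j\}\rangle$ is contained in $N$, since $W_{\ell+1} \subseteq \phi_{\mathcal{F}}(W_{\ell+1}) = N$ and each $Z_i \in N$. Because $\phi_{\mathcal{F}}$ is inclusion-preserving and fixes $N$ (idempotency on its image), we have $\phi_{\mathcal{F}}(M_\sigma) \subseteq N$ automatically. The content of the proposition is therefore the reverse inclusion $\phi_{\mathcal{F}}(M_\sigma) \supseteq N$, together with the side condition on $k\in\sigma$.

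The exchange is carried out inductively. Suppose we have already exchanged $Z_1,\ldots,Z_{i-1}$ for a set $\sigma_{i-1}\subseteq[\ell+1,n]$ of size $i-1$ with $\phi_{\mathcal{F}}(M_{\sigma_{i-1}}) = N$. Adjoin $Z_i$ and look for $k^* \in [\ell+1,n]\setminus \sigma_{i-1}$ whose removal preserves the $\phi_{\mathcal{F}}$-closure. Since $\phi_{\mathcal{F}}(W) = \bigcap_{(a,b)\in\mathcal{F}} (aR^n + W[b])$, the required equality decomposes, via the Chinese remainder theorem and the definition of $W[b]$, into finitely many conditions in the residue fields $R/\mathfrak{m}$ (just one for $R = \mathbb{Z}_p$, at most $\omega(N)$ for $R = \mathbb{Z}/N\mathbb{Z}$) relevant to each pair $(a,b)$: the reduction $\bar X_{k^*}$ must lie in the $(R/\mathfrak{m})$-span of the remaining reduced generators. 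The subbasis hypothesis on $Z_1,\ldots,Z_j$ ensures that the reductions $\bar Z_1,\ldots,\bar Z_j$ are linearly independent in every residue field, so the classical matroid-exchange lemma for vector spaces produces an admissible $k^*$ in each residue field separately. Taking the union of the resulting ``bad'' indices across the finitely many residue fields leaves a large admissible set, provided $n - \ell - i + 1$ exceeds $\omega(N)$ by the matroid deficit — which holds in the regime $\ell \leq \eta n$ under consideration.

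To build in the kernel condition on $k\in\sigma$, I restrict the previous exchange to the set of indices $k$ with $X_k \notin W_{k+1}[\infty]$, for which the kernel $\ker(R^n/W_{k+1}\to R^n/W_k) = \langle X_k + W_{k+1}\rangle$ is a free $R$-module of rank one. By Proposition~\ref{prop:noxintorsion} and the splitting given in Proposition~\ref{prop:splitfreetorsion}, the complement of this good set in $[\ell+1,n]$ has bounded cardinality (controlled by $\mathrm{rank}_R T_{\ell+1}$), which is negligible compared with $n-\ell$. The matroid exchange therefore still proceeds inside the good indices, and the resulting $\sigma$ is read off from the inductive construction.

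The principal obstacle is coordinating these two constraints — matroid-exchange compatibility across all residue fields of $R$, and the kernel-condition restriction — at every step of the induction. The crux is a counting/union-bound estimate at stage $i$ showing that the set of indices failing at least one of the two criteria has size strictly less than $n - \ell - i + 1$, so that a valid $k^*$ always exists. In the $\mathbb{Z}/N\mathbb{Z}$ setting this is where the dependence on $\omega(N)$ in Theorem~\ref{thm:compositecokernel} is absorbed; in the $\mathbb{Z}_p$ setting, with a single residue field, the bound is clean and straightforward.
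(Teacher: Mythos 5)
Your central reduction --- that the equality $\phi_{\mathcal{F}}(M_\sigma)=N$ decomposes, pair by pair in $\mathcal{F}$, into linear-algebra conditions over the residue fields $R/\mathfrak{m}$, so that a matroid exchange in each residue field produces the index to discard --- is where the argument breaks. The enlarged submodule is built from the operations $W \mapsto W[b]$ and $W \mapsto aR^n + W[b]$, and these depend on valuation data that is invisible modulo $\mathfrak{m}$. Concretely, take $R=\Z_p$, $n=2$, $X_1=(1,0)$, $X_2=(1,p)$, and $\mathcal{F}=\{(p,p)\}$. Then $X_2\equiv X_1 \bmod p$, so your residue-field exchange would permit removing $X_2$; but $\phi_{\mathcal{F}}(\langle X_1,X_2\rangle)=pR^2+\langle X_1,X_2\rangle[p]=R^2$, whereas $\phi_{\mathcal{F}}(\langle X_1\rangle)=pR^2+\langle(1,0)\rangle\neq R^2$. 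So membership of $\bar X_{k^*}$ in the $(R/\mathfrak{m})$-span of the remaining reduced generators does not imply that deleting $X_{k^*}$ preserves the $\phi_{\mathcal{F}}$-closure, and the inductive step fails. The paper's proof is valuation-sensitive in exactly the way yours is not: it reduces to a single swap, uses $Z\in N=\phi_{\mathcal{F}}(W_{\ell+1})$ to write $t(Z-v)=a_{\ell+1}X_{\ell+1}+\cdots+a_nX_n$, chooses the index $k$ by the condition $(a_k)=(t)$ (a coefficient matching $t$ up to a unit, i.e.\ of minimal valuation), and then solves this relation for $X_k$ so that every membership relation defining $N$ can be rewritten with $Z$ in place of $X_k$. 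That choice of $k$ via coefficient ideals, not linear independence mod $\mathfrak{m}$, is what legitimizes the swap and also delivers the kernel condition for $k\in\sigma$.

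Two secondary points. The containment you call ``free'' is not: $\phi_{\mathcal{F}}$ is monotone but not idempotent (with $W=0$ and $\mathcal{F}=\{(p,p)\}$ over $\Z_p$ one has $\phi_{\mathcal{F}}(0)=pR^n$ but $\phi_{\mathcal{F}}(pR^n)=R^n$), so $M_\sigma\subseteq N$ does not by itself give $\phi_{\mathcal{F}}(M_\sigma)\subseteq N$; any argument must use the specific way $M_\sigma$ is produced from $W_{\ell+1}$ and the $Z_i$. Moreover, your handling of the side condition $\ker(H_{k+1}\to H_k)\cong R$ invokes Propositions~\ref{prop:noxintorsion} and~\ref{prop:splitfreetorsion}, which are probabilistic statements, whereas the proposition is deterministic: it must hold for every matrix with $\phi(W_{\ell+1})=N$ and every admissible $Z_1,\dots,Z_j$. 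In the paper this condition comes out of the same choice $(a_k)=(t)$, not out of a union bound over ``bad'' indices.
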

\begin{proof}
It suffices to prove this proposition for $r = 1$, since the linear independence of $Z_1, ..., Z_j$ prevents a subsequent choice of $X_k$ from colliding with previously chosen vectors.

From the definition of $\phi$, we have
\[
  t(Z - v) = a_{\ell + 1} X_{\ell + 1} + \cdots + a_n X_n
\]
for some $v \in I R^n$ and coefficients $a_k \in R$. We can find $k$ such that $(a_k) = (t)$ as ideals in $R$. In particular, we have $a_k = tu$ for some $u \in R^\times$, and we therefore write
\[
  tX_k = u^{-1} t(Z - v) - u^{-1} a_{\ell + 1} X_{\ell + 1} - \cdots - u^{-1} a_n X_n.
\]
Now, any $y \in N$ satisfies
\[
  t(y - v') = b_{\ell + 1} X_{\ell + 1} + \cdots + a_n X_n
\]
but this equation can be rewritten to replace $X_k$ with $Z$.
\end{proof}
Now it is convenient to abbreviate events.  We define the events
\begin{align*}
  D_n &:= Y_1, ..., Y_r, X_1', ..., X_s' \in N \\
  E_N &:= Y_1, ..., Y_r, X_1', ..., X_s' \text{ are a subbasis in } N \\
  F_{N,\sigma} &:= \phi(\langle \{X_k\}_{k \notin \sigma}, Y_1, ..., Y_r, X_1', ..., X_s' \rangle) = N
\end{align*}
By Proposition~\ref{prop:dotheswap}, we have
\[
  \mathbb{P}(\phi(W_{\ell + 1}) = N) \leq \sum_{\substack{\sigma \subseteq [\ell + 1,n] \\ \abs{\sigma} = r+s}} \frac{\mathbb{P}(\{X_k\}_{k \in \sigma} \in N)}{\mathbb{P}(E_N)} \mathbb{P}(F_{N,\sigma})
\]

First we consider the ratio
\[
\frac{\mathbb{P}(\{X_k\}_{k \in \sigma} \in N)}{\mathbb{P}(E_N)}
\]
We can expand the denominator with conditional expectation,
\begin{align*}
  \mathbb{P}(E_N) &= \mathbb{P}(D_N) \mathbb{P}(E_N \mid D_N) \\
  &= \mathbb{P}(Y \in N)^r \mathbb{P}(X \in N)^s \mathbb{P}(E_N \mid D_N)
\end{align*}
For the numerator we need the following independence lemma.
\begin{proposition}
    Let $Z_1, ..., Z_j$ be independent random vectors in $R^n$.  Then we have the bound
    \[
      \mathbb{P}(Z_1, \ldots, Z_j \in N \mid Z_1, \ldots, Z_j \text{ are a subbasis in } N) \leq \prod_{\ell = 1}^j \mathbb{P}(Z_\ell \in N).
    \]
\end{proposition}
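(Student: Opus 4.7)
My plan is to engage with the statement carefully, because under the paper's definitions the left-hand side is in fact identically $1$. Indeed, "$Z_1, \ldots, Z_j$ are a subbasis in $N$" means, by the definition given earlier in this section, that $Z_\ell \in N$ for every $\ell$ \emph{together with} $R^n/\langle Z_1, \ldots, Z_j \rangle \cong R^{n-j}$; hence the conditioning event already implies the conditioned event $\{Z_\ell \in N \text{ for all } \ell\}$, and the displayed ratio equals $1$ whenever it is defined. Taken literally, the proposition would then force $\prod_\ell \mathbb{P}(Z_\ell \in N) \geq 1$, which is generically false. I will therefore prove the proposition in its operationally meaningful form — the one that is actually used immediately afterwards when $\mathbb{P}(E_N)$ appears in the swapping denominator — namely
\[
\mathbb{P}\bigl(Z_1, \ldots, Z_j \text{ are a subbasis in } N\bigr) \leq \prod_{\ell=1}^j \mathbb{P}(Z_\ell \in N),
\]
obtained by reading the "$\mid$" in the displayed statement as "$\wedge$".

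The proof then proceeds in two elementary steps. First, I would invoke the definitional inclusion of events
\[
\{Z_1, \ldots, Z_j \text{ are a subbasis in } N\} \subseteq \{Z_1 \in N\} \cap \cdots \cap \{Z_j \in N\},
\]
which is tautological from how "subbasis in $N$" is defined, and take probabilities of both sides to obtain the bound $\mathbb{P}(\text{subbasis in } N) \leq \mathbb{P}(Z_1 \in N, \ldots, Z_j \in N)$. Second, the hypothesis that $Z_1, \ldots, Z_j$ are independent allows me to factor the right-hand side as $\prod_\ell \mathbb{P}(Z_\ell \in N)$, completing the estimate.

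The main obstacle here is purely interpretive rather than analytic: recognizing that the stated conditional cannot be the intended claim in the given direction, and identifying the joint-probability version that the downstream swapping argument actually requires in order to obtain $\mathbb{P}(E_N) \leq \mathbb{P}(Y \in N)^r \mathbb{P}(X \in N)^s$. Once this is done, the argument is a one-line consequence of set inclusion followed by product-measure factorization for independent random vectors; no structural hypothesis on $N$ (sparsity, saturation, or otherwise) and no property of the entry distribution beyond independence is required.
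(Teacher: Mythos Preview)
Your diagnosis is correct: with the paper's definition of ``subbasis in $N$'' (which already requires each $Z_\ell \in N$), the conditional probability on the left equals $1$ whenever defined, so the displayed inequality is generically false as stated. Your $\wedge$-reinterpretation is a valid statement and your two-line proof of it is fine.

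There is, however, a real error in your justification for that reinterpretation. You claim the operational content is the bound $\mathbb{P}(E_N) \leq \mathbb{P}(Y \in N)^r \mathbb{P}(X \in N)^s$, but $\mathbb{P}(E_N)$ sits in the \emph{denominator} of the swapping ratio, so an upper bound on it points the wrong way. The paper handles $\mathbb{P}(E_N)$ by the exact factorization $\mathbb{P}(E_N) = \mathbb{P}(D_N)\,\mathbb{P}(E_N \mid D_N)$ and then \emph{lower}-bounds the conditional factor via Odlyzko. The proposition is invoked for the \emph{numerator} $\mathbb{P}(\{X_k\}_{k\in\sigma} \in N)$; the paper says so explicitly (``For the numerator we need the following independence lemma'').

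Your route also differs from the paper's. The paper does not use the trivial inclusion argument; it proves the termwise negative-correlation inequality
\[
\frac{\mathbb{P}(Z \in N \setminus U_j)}{\mathbb{P}(Z \notin U_j)} \;\leq\; \mathbb{P}(Z \in N), \qquad U_j := \langle Z_1, \ldots, Z_{j-1}\rangle \subseteq N,
\]
and chains these over $j$ after expanding by conditional expectation. This corresponds to a \emph{different} repair of the garbled statement---conditioning only on the linear-independence part of ``subbasis'' rather than replacing $\mid$ by $\wedge$---and it is this stronger conditional form, inherited from \cite{M10a}, that the phrase ``$\{X_k\}_{k\in\sigma}$ are simply independent in $N$ by construction'' is appealing to when bounding the numerator. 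Your inclusion bound is correct but does not recover this conditional inequality.
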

\begin{proof}
The proof is the same as in \cite{M10a} except for minor notational differences. Expanding the left hand side with conditional expectation,
\[
  \prod_{j=1}^r \mathbb{P}(Z_j \in N \mid Z_1, \ldots, Z_{j-1} \in N \text{ and } Z_1, ..., Z_r \text{ are a subbasis in } N)
\]
Let $U_j$ denote the $R$ span of $Z_1, \ldots, Z_{j-1}$. We claim that
\[
  \frac{\mathbb{P}(Z \in N \setminus U_j)}{\mathbb{P}(Z \notin U_j)} \leq \mathbb{P}(Z \in N).
\]
In fact,
\begin{align*}
    \mathbb{P}(Z \in N \setminus U_j) &= \mathbb{P}(Z \in U_j) \mathbb{P}(Z \in N \setminus U_j) + \mathbb{P}(Z \notin U_j) \mathbb{P}(Z \in N \setminus U_j) \\
    &\leq \mathbb{P}(Z \in U_j) \mathbb{P}(Z \notin U_j) + \mathbb{P}(Z \in N \setminus U_j) \mathbb{P}(Z \notin U_j) \\
    &= \mathbb{P}(Z \in N) \mathbb{P}(Z \notin U_j). \qedhere
\end{align*}
\end{proof}
Since $\{X_k\}_{k \in \sigma}$ are simply independent in $N$ by construction, we therefore conclude that
\[
  \mathbb{P}(\{X_k\}_{k \in \sigma} \in N) \leq \mathbb{P}(X \in N)^{r+s}.
\]
We now apply Lemma~\ref{lem:swap}. Since $N$ is unsaturated,
\[
  \frac{\mathbb{P}(X \in N)}{\mathbb{P}(Y \in N)} \leq \left( \frac12 + \frac1D + o(1) \right).
\]
Collecting terms, we see that so far we have shown that
\[
  \mathbb{P}(\phi(W_{\ell + 1}) = N) \leq \sum_{\substack{\sigma \subseteq [\ell + 1,n] \\ \abs{\sigma} = r+s}} \left( \frac12 + \frac1D + o(1) \right)^r \frac{1}{\mathbb{P}(E_N \mid D_N)} \mathbb{P}(F_{N,\sigma})
\]

Next we control $\mathbb{P}(E_N \mid D_N)$ from below. We can expand this probability into the product
\[
  \prod_{j=1}^r \mathbb{P}(Y_1, ..., Y_j \text{ s.c.~in } N \mid Y_1, ..., Y_{j-1} \text{ s.c.~in } N \text{ and } Y_1, ..., Y_r \in N)
\]
along with analogous terms for $X'$. We can write each term of the product in the form
\[
  \prod_{\mathfrak{m}} (1 - \mathbb{P}(Y_j \in \langle Y_1, ..., Y_{j-1} \rangle + \mathfrak{m}R^n \mid Y_j \in N)).
\]
However, we can bound this with Odlyzko's lemma and the condition on the combinatorial codimension of $N$.

To finish the argument, we now sum over all submodules $N$ of prescribed isomorphism class. We then find that
\[
  \mathbb{P}(\phi(W_{\ell + 1}) \text{ is unsat}) \lesssim \sum_{\substack{\sigma \subset [\ell+1,n] \\ \abs{\sigma} = r+s}} (\frac12 + \frac1D + o(1))^r \sum_{N \text{ unsat}} \mathbb{P}(F_{N,\sigma}).
\]
We see that the inner sum on the right hand side is always bounded by $1$, since the collection of vectors can only induce a single submodule. We then choose $r$,$s$ appropriately to bound the whole quantity by $O(e^{-c \alpha n})$. \qed

\subsection{Swapping Lemma} \label{sec:swap}

We will now prove a generalization of the swapping lemma from \cite{TV06b}, \cite{TV07}, \cite{M10a}.
\begin{proof}[Proof of Lemma~\ref{lem:swap}]
We will abbreviate $\gamma = 1/8$ for convenience. Let
\[
  \nu(t) := \begin{cases} \gamma \mu * \mu^-(t), & t \neq 0 \\ 1 - \sum_{s \neq 0} \nu(s), & t = 0. \end{cases}
\]
Here $\mu^-(t) := \mu(-t)$. It is trivial to verify that $\nu$ is a probability distribution and $\widehat{\nu} > 1 - 2 \gamma$. It also has min-entropy $\beta = \gamma \alpha = \alpha/8$. 

The Fourier transform of $\nu$ is
\[
  \widehat{\nu} = 1 - \gamma + \gamma \abs{\widehat{\mu}}^2.
\]

It now suffices to verify the swapping inequality. We observe that
\[
  \mathbb{P}(X \in N) - \abs{N^\perp}^{-1} = \abs{N^\perp}^{-1} \sum_{\psi \in N^\perp \setminus \{0\}} \mathbb{E} \psi(X) = \abs{N^\perp}^{-1} \sum_{\psi \in N^\perp \setminus \{0\}} \prod_{\ell = 1}^n \widehat{\mu}(\psi_\ell)
\]
and similar for $\mathbb{P}(Y \in N)$. We therefore define
\begin{align*}
  f(\psi) &:= \prod_{\ell = 1}^n \abs{\widehat{\mu}(\psi_\ell)} \\
\intertext{and}
  g(\psi) &:= \prod_{\ell = 1}^n \widehat{\nu}(\psi_\ell)
\end{align*}
so that it suffices to show that
\[
  \sum_{\psi \in N^\perp \setminus \{0\}} f(\psi) \leq \left( \frac12 + o(1) \right) \sum_{\psi \in N^\perp \setminus \{0\}} g(\psi).
\]
We do this by level sets. For $u > 0$ define
\begin{align*}
  F(u) := \{ \psi \in N^\perp \mid f(\psi) \geq u \} \\
\intertext{and}
  G(u) := \{ \psi \in N^\perp \mid g(\psi) \geq u \}
\end{align*}
and likewise let $F'(u) = F(u) \setminus \{0\}$ and $G'(u) = G(u) \setminus \{0\}$.

Let $\eps > 0$ be chosen later. We must split the sum for $f$ into two parts: those frequencies $\psi$ where $f(\psi) \leq \eps$ and those where $f(\psi) > \eps$.

We first claim that $f(\psi) \leq g(\psi)^4$. This follows from the pointwise estimate $\abs{\widehat{\mu}(t)} \leq \widehat{\nu}(t)^4$ by the arithmetic-geometric mean inequality,
\[
  (\abs{\widehat{\mu}(t)}^2)^{1/8} \leq \frac18 (\abs{\widehat{\mu}}^2 + 7) = \nu(t).
\]
This controls those frequencies where $f$ is small. In fact,
\[
  \sum_{\substack{\psi \in N^\perp \setminus \{0\} \\ f(\psi) \leq \eps}} f(\psi) \leq \eps^{3/4} \sum_{\substack{\psi \in N^\perp \setminus \{0\} \\ f(\psi) \leq \eps}} g(\psi)
\]
so as long as $\eps \to 0$ as $n \to \infty$ this portion is done.

Now we will consider those frequencies where $f$ is large. In this case, we will apply Kneser's inequality to the sumset inclusion $F(u) + F(u) \subseteq G(u)$.

It suffices to show that $\abs{\widehat{\mu}(t)\widehat{\mu}(s)} \leq \widehat{\nu}(t + s)^2$. As in \cite{M10a} we consider two cases. If either $\widehat{\mu}(t) < 1 - 4 \gamma$ or $\widehat{\mu}(t) < 1 - 4 \gamma$ then the inequality is trivial from the lower bound for $\widehat{\nu}$. Otherwise, we write $\abs{\widehat{\mu}(t)} = 1 - \theta_1$ and $\abs{\widehat{\mu}(s)} = 1 - \theta_2$. By Lemma~\ref{lem:specadds} we have $\abs{\widehat{\mu}(t + s)} \geq 1 - 2(\theta_1 + \theta_2)$. But by the definition of $\nu$ we get $\widehat{\nu}(t + s) = 1 - \gamma + \gamma \abs{\widehat{\mu}(t + s)}^2 \geq \abs{\widehat{\mu}(t) \widehat{\mu}(s)}$.

Kneser's theorem tells us that $2 \abs{F(u)} \leq \abs{\Sym F(u) + F(u)} + \abs{G(u)}$. We would like to show that $\Sym (F(u) + F(u)) = \{0\}$. It suffices to show that $G(u)$ does not contain any non-trivial subgroup, as this would in turn guarantee that $F(u) + F(u)$ does not either.

Let $H \lhd G(u)$ be minimal, so that $H \cong \Z/p\Z$ for some prime $p$. Choose $w \in N^\perp$ that generates $H$ as a $\Z/p\Z$-module; since $N$ is not sparse, we can assume that $w$ contains at least $\delta n$ non-zero entries.

Define the function
\[
  h(t) := \sum_{\ell = 1}^n 1 - \widehat{\nu}(t_\ell)^2
\]
Averaging $h$ over $H$,
\[
  p^{-1} \sum_{t \in \Z/p\Z} h(t w) = n - p^{-1} \sum_{t \in \Z/p\Z} \sum_{\ell = 1}^n \widehat{\nu}(t w_\ell)^2
\]
By Plancherel's theorem and the fact that $N$ is not saturated we see that this entire quantity is bounded below by $\gamma \alpha \delta n$. Therefore we simply require that $u \leq \gamma \alpha \delta n$, which gives us the bound
\[
  \sum_{\substack{\psi \in N^\perp \setminus \{0\} \\ f(\psi) > \eps}} f(\psi) \leq \frac12 \sum_{\substack{\psi \in N^\perp \setminus \{0\} \\ f(\psi) > \eps}} g(\psi)
\]
as required.
\end{proof}

\subsection{Semi-saturated submodules} \label{sec:semisat}

Let $N$ be a semi-saturated submodule.  By the inverse Fourier transform,
\begin{align*}
    e^{-dn} &\leq \abss{\mathbb{P}(X \in N) - \frac{1}{\abs{N^\perp}}} \\
    &\leq \frac{1}{\abs{N^\perp}} \sum_{\xi \in N^\perp \setminus \{0\}} \prod_{\ell = 1}^n \abs{\widehat{\mu}(\xi_\ell)}
\end{align*}
In particular, we can find a $\zeta \in N^\perp \setminus \{0\}$ such that
\[
  \exp(-dn) \leq \exp(-\frac12 \sum_{\ell = 1}^n \psi(\zeta_\ell) ),
\]
with $\psi(t) := 1 - \abs{\widehat{\mu}(t)}^2$; taking logarithms gives
\[
  \sum_{\ell = 1}^n \psi(\zeta_\ell) \leq 2 dn.
\]
Let $\kappa$ be a parameter to be chosen later.  For all but $\kappa$ of $\xi_\ell$, we have
\[
  \psi(\zeta_\ell) \leq \frac{2 dn}{\kappa}
\]
or, equivalently, $\zeta_\ell \in \Spec_{1 - \eps} \mu$ for $\eps = \frac{dn}{\kappa}$.

\begin{proposition}
For all $\beta > 0$ there exists $\eps > 0$ such that for all orders $T$, $\Spec_{1 - \eps} \mu$ contains at most $\beta T$ elements of order $T$.
\end{proposition}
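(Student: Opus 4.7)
The plan is to split on the size of $T$: small orders are ruled out directly by the min-entropy hypothesis, while large orders are controlled via Plancherel combined with Kneser's theorem applied to the sumset structure of $\Spec$. I will focus on $R = \Z_p$ so that $T = p^a$ (the $R = \Z/N\Z$ case follows prime-by-prime). Write $\mu_T$ for the pushforward of $\mu$ to $\Z/T\Z$; characters of $R$ of order dividing $T$ are parameterized by $c \in \Z/T\Z$ with $\abs{\widehat{\mu}(\psi_c)} = \abs{\widehat{\mu_T}(c)}$, and the min-entropy hypothesis gives $\|\mu_T\|_2^2 \leq \max_t \mu_T(t) \leq \max_t \mu_p(t \bmod p) \leq 1-\alpha$. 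A key preliminary is a Fourier gap on $\Z/p\Z$: there exists $\eta = \eta(\alpha, p) > 0$ such that $\abs{\widehat{\mu_p}(\chi)} \leq 1 - \eta$ for every non-trivial character $\chi$ of $\Z/p\Z$. This follows from compactness (or quantitatively by writing $\mu_p = (1-\alpha)\delta_{t_0} + \alpha\rho$ and using that some $t_1 \neq t_0$ must carry mass $\gtrsim \alpha/p$, yielding $1 - \abs{\widehat{\mu_p}(\chi)}^2 \gtrsim \alpha/p^4$).

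For the small-$T$ regime, say $T \leq T_0 := \lceil 2/\beta \rceil$, I claim $\Spec_{1-\eps}\mu$ contains no element of order exactly $T = p^a$ once $\eps < \eta/T_0^2$. Indeed, if $\psi \in \Spec_{1-\eps}\mu$ has order $p^a$, then Lemma~\ref{lem:specadds} applied to $p^{a-1}$ copies of $\psi$ places $p^{a-1}\psi$ in $\Spec_{1-p^{2(a-1)}\eps}\mu$; but $p^{a-1}\psi$ has order exactly $p$, so the Fourier gap forces $\abs{\widehat{\mu}(p^{a-1}\psi)} \leq 1-\eta$, contradicting $p^{2(a-1)}\eps \leq T_0^2\eps < \eta$.

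For the large-$T$ regime ($T > T_0$), let $S := \{c \in \Z/T\Z : \abs{\widehat{\mu_T}(c)} \geq 1-\eps\}$ and fix an integer $k \geq 4(1-\alpha)/\beta$. Lemma~\ref{lem:specadds} gives $kS \subseteq S_k := \{c \in \Z/T\Z : \abs{\widehat{\mu_T}(c)} \geq 1-k^2\eps\}$. Since $0 \in kS$, the stabilizer $H := \Sym(kS)$ satisfies $H \subseteq kS \subseteq S_k$; any non-trivial subgroup of $\Z/p^a\Z$ contains an element of order $p$, so once $k^2\eps < \eta$ the Fourier gap forces $H = \{0\}$. Kneser's theorem then yields $\abs{kS} \geq k\abs{S} - (k-1)$, and combining this with the Plancherel bound $\abs{S_k} \leq T(1-\alpha)(1-k^2\eps)^{-2} \leq 2T(1-\alpha)$ gives $\abs{S} \leq 2T(1-\alpha)/k + 1 \leq T\beta/2 + 1 \leq \beta T$, using $T > 2/\beta$ in the last step. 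Taking $\eps := \min(\eta/T_0^2,\ \eta/k^2,\ 1/(2k^2))$ makes both regimes run simultaneously.

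The main obstacle is the triviality of $\Sym(kS)$: this arithmetic rigidity is what lets Kneser's bound improve on the na\"ive Plancherel estimate $\abs{S} \lesssim T(1-\alpha)$, which is useless for $\beta$ smaller than $1-\alpha$. Without the min-entropy Fourier gap on order-$p$ characters, the stabilizer could contain a subgroup of index $p$ and the iteration would collapse, yielding only $\beta \gtrsim 1/p$. The $R = \Z/N\Z$ case runs identically, with the Fourier gap invoked at each of the finitely many primes dividing $N$.
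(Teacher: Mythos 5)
Your argument is correct as a proof of the literal statement (for a fixed entry law $\mu$ on a fixed ring $R$), but it diverges from the paper at the decisive step and pays a quantitative price there. Both proofs play Kneser's theorem against the sumset stability of the spectrum (Lemma~\ref{lem:specadds}); the difference is how the stabilizer is killed. You force triviality of $\Sym(kS)$ through a pointwise Fourier gap $\abs{\widehat{\mu}(\chi)} \leq 1-\eta$ for every character of order exactly $p$, and such a gap is unavoidably $p$-dependent: for $\mu$ uniform on $\{0,1\}$ the order-$p$ character $c=1$ gives $\abs{\widehat{\mu}} = \cos(\pi/p) \to 1$, so your $\eta \gtrsim \alpha/p^4$ is essentially sharp, your $\eps$ depends on $p$, and your small-$T$ claim (that $\Spec_{1-\eps}\mu$ contains no low-order elements at all) is false with $p$-uniform constants. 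The paper proves only the weaker fact that no full additive coset $H+s$ lies in $\Spec_{1-\eta}\mu$, via a Markov/Plancherel count over the coset, $(1-\eta)^2\,\#\bigl((H+s)\cap \Spec_{1-\eta}\mu\bigr) \leq \abs{H}(1-\alpha)$, which yields $\eta=\alpha/2$ uniformly in $p$ and $T$; this is exactly what the iterated Kneser corollary needs, and the larger spectrum is then bounded by the trivial count $T$ of characters of order dividing $T$, so your Plancherel bound on $\abs{S_k}$ and the small-$T$/large-$T$ split are unnecessary (deleting $0$ from both sides already absorbs the additive $k-1$ slack, giving $\eps=\beta^2\alpha/2$ outright). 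The uniformity is not cosmetic in context: $\eps$ feeds into $d$ and hence the exponential rate, and Theorems~\ref{thm:padiccokernel} and \ref{thm:compositecokernel} assert constants independent of $p$, which your choice of $\eps$ would not deliver. (Minor bookkeeping: with $\eps \leq 1/(2k^2)$ you only get $(1-k^2\eps)^{-2} \leq 4$, so $k$ should be taken $\geq 8(1-\alpha)/\beta$; easily fixed.)
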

\begin{proof}
We first notice that there is an $\eta > 0$ such that $\Spec_{1 - \eta} \mu$ does not contain any non-trivial additive cosets $H + s$ with $H \lhd \widehat{R}$ and $s \in \widehat{R}$. In fact, for each such $H$ and $s$ we can apply Markov's inequality and use the fact that $\mu$ has min-entropy $\alpha$
\[
  (1 - \eta)^2 \#(H + s \cap \Spec_{1 - \eta} \mu) \leq \sum_{h \in H} \abs{\widehat{\mu}(h + s)}^2 \leq \abs{H} (1 - \alpha)
\]
We therefore choose $\eta = \alpha/2$. With this value, we apply the iterated form of Kneser's inequality to find
\[
  k \abs{\{t \in \widehat{R} \mid \abs{t} = T\} \cap \Spec_{1 - \eps} \mu \setminus \{0\}} \leq \abs{\{t \in \widehat{R} \mid \abs{t} = T\} \cap \Spec_{1 - k^2 \eps} \mu \setminus \{0\}}
\]
so we pick $k = \beta^{-1}$ and $\eps = k^{-2} \eta$.
\end{proof}

We now choose $\eps$, and therefore $d$, such that $\Spec_{1 - \eps} \mu$ has small cardinality in every finite torsion submodule of $\widehat{R}$; namely $\beta T$ for order $T$ elements.

We can count directly the number of vectors $\zeta$ of given order with the above constraint. In fact, for order $T$ we have
\[
  \#\{\text{structured } \zeta \text{ of order } T\} \leq \binom{n}{\kappa} T^\kappa (\beta T)^{n-\kappa} \lesssim \beta^n T^n
\]
where in the last inequality we chose $\kappa = n/10$ and replaced $\beta$ with a comparable value.

We can count the number of submodules $N$ with prescribed perpendicular isomorphism class and perpendicular to a fixed vector.
\begin{proposition}
Let $W^\perp$ be a fixed finite submodule of $\widehat{R}^n$ and let $\zeta \in \widehat{R}^n$ have finite order $T$. Then the number of submodules $N \lhd R^n$ with $N^\perp \cong W^\perp$ and $\zeta \in N^\perp$ is bounded by
\[
\#\{ N \mid \zeta \in N^\perp\} \lesssim \frac{\abs{W^\perp}^n}{\abs{\Aut W^\perp} T^n} \#\{\zeta \in W^\perp \mid \abs{\zeta} = T\}
\]
\end{proposition}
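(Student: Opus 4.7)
The plan is to use the double-perp duality between submodules of $R^n$ with finite cokernel and finite submodules of $\widehat{R}^n$ to convert the problem into a counting problem for submodules of $\widehat{R}^n$ isomorphic to $W^\perp$ and containing $\zeta$. Since $N \mapsto N^\perp$ is a bijection, the quantity we want equals
\[
  \#\{M \subseteq \widehat{R}^n : M \cong W^\perp \text{ and } \zeta \in M\}.
\]

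Next I would parametrize these $M$ by injective $R$-module homomorphisms $\iota : W^\perp \hookrightarrow \widehat{R}^n$ whose image contains $\zeta$. Each such $M$ arises from exactly $|\Aut W^\perp|$ injections. An injection with $\zeta \in \iota(W^\perp)$ determines a unique element $\xi := \iota^{-1}(\zeta) \in W^\perp$, which necessarily has order $T$. So
\[
  \#\{M \cong W^\perp : \zeta \in M\} \;=\; \frac{1}{|\Aut W^\perp|} \sum_{\substack{\xi \in W^\perp \\ |\xi| = T}} \#\bigl\{\iota : W^\perp \hookrightarrow \widehat{R}^n \;\big|\; \iota(\xi) = \zeta\bigr\}.
\]
It remains to bound the inner count by $|W^\perp|^n / T^n$ uniformly in $\xi$; then summing gives precisely the stated bound.

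To bound the inner count, drop the injectivity requirement (an upper bound) and use that $\phi \mapsto \phi(\xi)$ is an additive map $\Hom_R(W^\perp, \widehat{R}^n) \to \widehat{R}^n$ whose kernel equals $\Hom_R(W^\perp/\langle\xi\rangle, \widehat{R}^n)$. Since all fibers are cosets of the kernel, each non-empty fiber has cardinality $|\Hom_R(W^\perp/\langle\xi\rangle, \widehat{R}^n)|$. Decomposing $W^\perp/\langle\xi\rangle \cong \bigoplus_i \Z/d_i\Z$ with $\prod_i d_i = |W^\perp|/T$, this $\Hom$-set factors as $\prod_i \widehat{R}^n[d_i]$, and the elementary bound $|\widehat{R}[d]| \le d$ (valid for both $R = \Z_p$ where $\widehat{R} = \Q_p/\Z_p$, and $R = \Z/N\Z$ where $\widehat{R} \cong R$) gives the desired estimate $(|W^\perp|/T)^n$.

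The main obstacle is the fiber-size step: one has to verify that the structure of $\widehat{R}$ is benign enough that the $d$-torsion satisfies $|\widehat{R}[d]| \le d$. This is immediate for the two rings of interest (both because $\widehat{R}$ is divisible in the $\Z_p$ case and because $\widehat{R} \cong R$ is self-dual in the $\Z/N\Z$ case), so no genuinely hard estimate is required; the remainder of the argument is bookkeeping involving the orbit-counting identification with $|\Aut W^\perp|$.
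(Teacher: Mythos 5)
Your argument is correct, but it takes a different route from the paper's. The paper proves the bound by double counting pairs $(N,\zeta)$ with $\zeta \in N^\perp$ and $\abs{\zeta}=T$: it uses the fact that $\Aut(\widehat{R}^n)$ (i.e.\ $GL_n(R)$ acting on the dual) is transitive on elements of a fixed order $T$, so $\#\{N \mid \zeta \in N^\perp\}$ is independent of $\zeta$, and then combines the exact identity this yields with the trivial bound $\#\{N \mid N^\perp \cong W^\perp\} \leq \abs{W^\perp}^n/\abs{\Aut W^\perp}$ and the lower bound $\#\{\zeta \in \widehat{R}^n \mid \abs{\zeta}=T\} \gtrsim T^n$ (which is where the implied constant enters). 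You instead parametrize the finite submodules $M \cong W^\perp$ of $\widehat{R}^n$ containing $\zeta$ by injections $\iota: W^\perp \hookrightarrow \widehat{R}^n$ modulo $\Aut W^\perp$, split according to $\xi = \iota^{-1}(\zeta)$, and bound each fiber of the evaluation map $\phi \mapsto \phi(\xi)$ on $\operatorname{Hom}_R(W^\perp,\widehat{R}^n)$ by $\abs{\operatorname{Hom}_R(W^\perp/\langle \xi\rangle, \widehat{R}^n)} = (\abs{W^\perp}/T)^n$, using $\abs{\widehat{R}[d]} \leq d$. Your version buys a genuine inequality with no implied constant and avoids both the transitivity claim and the order-$T$ count in $\widehat{R}^n$; the paper's version buys an exact intermediate identity and requires essentially no module-theoretic computation beyond the symmetry of $\widehat{R}^n$. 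Two small points you should make explicit: the passage from counting $N \lhd R^n$ to counting finite $M \leq \widehat{R}^n$ only needs injectivity of $N \mapsto N^\perp$, which for $R = \Z_p$ uses that submodules of $\Z_p^n$ are closed so that $N^{\perp\perp}=N$; and the identification of additive homomorphisms with $R$-module homomorphisms (automatic here since all modules involved are torsion with the $R$-action factoring through finite quotients of $\Z$).
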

\begin{proof}
We double count the number of pairs $\zeta \in N^\perp$,
\[
  \sum_{\abs{\zeta} = T} \#\{N \mid \zeta \in N^\perp\} = \#\{(N^\perp, \zeta) \mid \zeta \in N^\perp, \abs{\zeta} = T\} = \sum_N \#\{\zeta \in N^\perp \mid \abs{\zeta} = T\}
\]
We observe that $\#\{N \mid \zeta \in N^\perp\}$ is independent of $\zeta$. In fact, for any two $\zeta_1, \zeta_2$ of order $T$ we can find an automorphism $\widehat{R}^n \to \widehat{R}^n$ that maps $\zeta_1$ to $\zeta_2$; this induces a correspondence between submodules of the same isomorphism class. We conclude that
\[
  \#\{N \mid \zeta \in N^\perp\} = \frac{\#\{N \mid N^\perp \cong W^\perp \} \#\{\zeta \in N^\perp \mid \abs{\zeta} = T\}}{\#\{\zeta \in \widehat{R}^n \mid \abs{\zeta} = T\}}.
\]
We have the trivial bound
\[
  \#\{N \mid N^\perp \cong W^\perp\} \leq \frac{\abs{N^\perp}^n}{\abs{\Aut N^\perp}},
\]
so collecting terms we find
\[
  \#\{ N \mid \zeta \in N^\perp\} \lesssim \frac{\abs{W^\perp}^n}{\abs{\Aut W^\perp} T^n} \#\{\zeta \in W^\perp \mid \abs{\zeta} = T\} \qedhere
\]
\end{proof}

Now we are ready to estimate the probability that $W$ is semi-saturated. We count the number of semi-saturated spaces,
\begin{align*}
  \mathbb{P}(W \text{ is semi-saturated}) &\leq \sum_{N \text{ semi-saturated}} \mathbb{P}(W = N) \\
  &\leq \#\{N \text{ semi-saturated} \mid N^\perp \cong W^\perp \} \left( \frac{D}{\abs{W^\perp}} \right)^{n-\ell}
\end{align*}
With the above bounds on the number of structured vectors and the number of submodules perpendicular to a given vector, we bound
\[
  \#\{N \mid \text{semi-sat and } \zeta \in N^\perp \text{ struct., order } T\} \lesssim \beta^n \frac{\abs{W^\perp}^n}{\abs{\Aut W^\perp}} \#\{\zeta \in W^\perp \mid \abs{\zeta} = T\}
\]
Summing over every possible order $T$,
\[
  \#\{N \mid \text{semi-sat and } \zeta \in N^\perp \text{ struct.}\} \lesssim \beta^n \frac{\abs{W^\perp}^n}{\abs{\Aut W^\perp}} \abs{W^\perp}.
\]
We now combine this with the bound on $\mathbb{P}(W = N)$ to find
\[
  \mathbb{P}(W \text{ semi-sat}) \lesssim \beta^n D^{n-\ell} \frac{\abs{W^\perp}^\ell}{\abs{\Aut W^\perp}} \abs{W^\perp}.
\]
By construction of $W$, its perpendicular module $W^\perp$ must contain at least $\ell$ terms of maximal order, so
\[
  \abs{\Aut W^\perp} \geq \abs{W^\perp}^\ell.
\]
We also know that $\abs{W^\perp} \leq D e^{cn}$ since $W$ is semi-saturated, so if we take $\beta$ (and therefore $d$) sufficiently small then we find
\[
  \mathbb{P}(W \text{ is semi-saturated}) = O(e^{-cn})
\]
as required. \qed

\section{Acknowledgements}

The author would like to thank Terence Tao for helpful criticism and guidance.

\bibliography{cokernel.submit.v1}{}
\bibliographystyle{abbrv}

\end{document}